\documentclass[a4paper]{article}

\usepackage{graphicx} % Required for inserting images
\usepackage{hyperref}
\usepackage{amssymb,amsfonts,amsthm,amsmath,mathtools,cleveref,comment,mathrsfs}
\usepackage{pgf}
\usepackage[margin=1in]{geometry}
\usepackage{xcolor}
\usepackage{paralist} %compactitem
\usepackage{tikz-cd}
\usepackage{todonotes}
\usepackage[backend=biber,style=alphabetic,sorting=nyt,maxbibnames=99,maxalphanames=5,doi=false,isbn=false,url=false]{biblatex}
\usepackage[normalem]{ulem}
\usepackage{float}
\usepackage{thm-restate}
\usepackage{epstopdf}
\usepackage{listings}
\usepackage{soul,cancel}
\usepackage{enumitem}

\usetikzlibrary{arrows}

\newtheorem{theorem}{Theorem}[section]
\newtheorem{lemma}[theorem]{Lemma}
\newtheorem{proposition}[theorem]{Proposition}
\newtheorem{corollary}[theorem]{Corollary}
\theoremstyle{definition}
\newtheorem{definition}[theorem]{Definition}
\theoremstyle{remark}
\newtheorem{remark}[theorem]{Remark}

\newtheorem{example}[theorem]{Example}

\newtheorem{claim}[theorem]{Claim}

\numberwithin{table}{section}

\newcommand{\defn}[1]{\ul{#1}}
\newcommand\cD{{\mathcal D}}

\newcommand\cG{{\mathcal G}}
\newcommand\cH{{\mathcal H}}
\newcommand\cI{{\mathcal I}}

\newcommand\cM{{\mathcal M}}
\newcommand\cP{{\mathcal P}}
\newcommand\cQ{{\mathcal Q}}

\newcommand\BB{{\mathbb B}}
\newcommand\CC{{\mathbb C}}

\newcommand\KK{{\mathbb K}}

\newcommand\PP{{\mathbb P}}

\newcommand\RR{{\mathbb R}}

\newcommand\ZZ{{\mathbb Z}}

\newcommand\bbG{{\mathbb G}}
\newcommand\bbP{{\mathbb P}}

\renewcommand{\st}{\colon} % st == such that (in case we want to switch to | in the future or add/remove space around)
\newcommand{\set}[2]{\left\{#1 \st #2\right\}} % Short hand for { something \st something} with appropriate brackets
\newcommand{\symDiff}{\mathbin\triangle} % Symmetric Difference - Considering we have Delta matroids, maybe a different symbol might be more useful?
\newcommand{\sd}[2]{{#1 \symDiff \{#2\}}}

\renewcommand{\emptyset}{\varnothing}
\renewcommand{\bar}{\overline}
\DeclarePairedDelimiter\order{\lvert}{\rvert}%

\newcommand{\ceil}[1]{\left\lceil #1 \right\rceil}

\DeclareMathOperator{\Rank}{rank}
\DeclareMathOperator{\proj}{proj}
\DeclareMathOperator{\conv}{conv}

\DeclareMathOperator{\OGr}{OGr} % Lie

\newcommand{\Sp}{\mathcal{S}}

\newcommand{\polyOf}[1]{P(#1)}

\newcommand{\cube}[2]{\square_{#1, #2}} % Ben: Changed to \square as Kieran had `accidentally' used a different definition using Q
\newcommand{\cubeVertex}[1]{\varepsilon_{#1}}

\newcommand{\evencube}[2]{D^0_{#1, #2}}
\newcommand{\oddcube}[2]{D^1_{#1, #2}}

\newcommand{\hypersimplex}[2]{\cH_{#1, #2}}

\newcommand{\matroid}{M}

\DeclareMathOperator{\trop}{trop}

\begin{document}
\title{Characterisations of strong $\Delta$-matroids}
\author{Kieran Calvert\thanks{Lancaster University, E-mail: \texttt{kieran.calvert@lancaster.ac.uk}} \and 
Aram Dermenjian\thanks{Universidad de Sevilla, E-mail: \texttt{aram.dermenjian.math@gmail.com}} \and 
Alex Fink\thanks{Queen Mary University of London, E-mail: \texttt{a.fink@qmul.ac.uk}} \and 
Ben Smith\thanks{Lancaster University, E-mail: \texttt{b.smith9@lancaster.ac.uk}}}
\date{}

\maketitle

\abstract{
We study characterisations of strong $\Delta$-matroids, compiling a list of five equivalent descriptions.
We show a variant of Wenzel's exchange property and the hyperplane exchange property of Borovik--Gelfand--White are equivalent.
We also introduce two novel characterisations in terms of `peerless' and `isolated' antipodes within the system of feasible sets, banning certain configurations of antipodes either globally or locally.
As a corollary, we obtain new `local' exchange axioms for matroids and $\Delta$-matroids.
We give algebraic motivation for these new characterisations by introducing the peerless antipode equations, tropical equations that govern whether a $\Delta$-matroid has no peerless antipodes.
We show that these arise as the tropicalisation of a specific basis of quadratics cutting out the orthogonal Grassmannian.}

\section{Introduction}

The observation goes back to Brualdi \cite{Brualdi1969}
that the ordinary basis exchange axiom for matroids is equivalent to the symmetric exchange axiom.
$\Delta$-matroids are also defined by a basis exchange axiom,
but not all $\Delta$-matroids satisfy a counterpart of symmetric exchange: the smallest example that doesn't has ground set of size~$3$. 
Those $\Delta$-matroids that do are called \emph{strong}.

In fact, multiple different-looking symmetric exchange properties for $\Delta$-matroids appear in the literature.
Wenzel in \cite{Wenzel1993} formulated $\Delta$-matroid symmetric exchange in terms of set systems.
Later, Borovik, Gelfand and White in their book \cite{BorovikGelfandWhite2003} interpreted both matroids and $\Delta$-matroids (which they referred to as Lagrangian matroids)
as cases of Coxeter matroids, associated with different choices of the ``type'' data of an orbit polytope for a reflection group.
In this context they gave an axiom of the symmetric exchange flavour using hyperplanes,
stated uniformly over all classes of Coxeter matroids.
See \Cref{ss:strong} for further details.
For $\Delta$-matroids, however, Borovik, Gelfand and White also made use of one of Wenzel's properties without comparing it to the hyperplane property.
We carried over this omission in our previous paper \cite{CDFS1}.
In the present paper we prove their equivalence, 
as the assertion \eqref{it:1-strong}$\Leftrightarrow$\eqref{it:2-weak-wenzel} of our first main theorem, \Cref{thm:A} below.

The novel equivalences in \Cref{thm:A} are those involving antipodes. These have the following combinatorial motivation.
Some of the axiom systems for matroids can be formulated in terms of either arbitrary sets or only sets that are close together in (say) Hamming distance. We think of these as ``global'' and ``local'' formulations.
For example, when describing submodularity of the rank function, it is equivalent to ask that
\[
    \Rank(A)+\Rank(B)\ge\Rank(A\cup B)+\Rank(A\cap B)
\]
for all pairs of subsets $A,B$ of the ground set (the global choice), 
or only for those pairs with $\order{A\symDiff B}=2$ (the local choice).

Consider what a local formulation of matroid basis exchange would be.
%our focus in this paper is symmetric exchange, but for the moment think of ordinary exchange.
If the exchange axiom has nontrivial content for two bases $B$ and $B'$, 
then the Hamming distance $\order{B'\symDiff B}$ is at least $4$.
But there are set systems that satisfy (symmetric) basis exchange ``locally'', 
for all pairs $B,B'$ of bases with $\order{B'\symDiff B}=4$, but are not matroids.
As has been observed many times, e.g.\ by Baker and Lorscheid in \cite[Example 6.25]{BakerLorscheid}, the set system $\{\{1,2,3\},\{4,5,6\}\}$
vacuously satisfies this local condition, containing no pairs $(B,B')$ of distinct sets with $\order{B'\symDiff B}=4$,
but is not the set of bases of a matroid.

This theme has been important in the literature on matroids with  coefficients. 
For valuated matroids, the \emph{3-term tropical Pl\"ucker relations} are a local valuated exchange axiom for all pairs $B,B'$ of bases with $\order{B'\symDiff B}=4$.
One definition for a valuated matroid is that it satisfies this local valuated exchange axiom \emph{and} has matroidal support \cite[Theorem 5.2.25]{Murota:2010};
the inclusion of the second clause covers for the failure of local basis exchanges to imply all basis exchanges.
More generally, the theory of matroids over partial hyperstructures defines a \emph{weak matroid} to be one which satisfies a local exchange axiom \emph{and} has matroidal support \cite{BakerBowler}, for the same reasons as for valuated matroids (see \Cref{rem:strong-vs-weak} for further details).
%The definition often given for a ``tropical Pl\"ucker vector'' \alex{Recast to be about Murota, Matrices and Matroids for Systems Analysis, 5.2.25? doesn't use this language but does use ``local''!}
%is a vector which satisfies the 3-term Pl\"ucker relations \emph{and} has matroidal support;
%the inclusion of the second clause covers for the failure of local basis exchanges to imply all basis exchanges.

In this paper we give characterisations, for both matroids and strong $\Delta$-matroids,
in the spirit of how little we must add to local symmetric exchange to recover the full symmetric exchange axiom.
As mentioned, both matroids and $\Delta$-matroids are cases of Coxeter matroids.
Like matroids do, Coxeter matroids have various cryptomorphic definitions. For our purposes, we will use the polyhedral definition and we switch to polyhedral language now.
Matroids are certain subpolytopes of hypersimplices whose vertices are in bijection with the bases;
likewise, $\Delta$-matroids are certain subpolytopes of cubes whose vertices are in bijection with the feasible sets.
Hypersimplices and cubes share the property that any two vertices are antipodal in some face $F$ that is centrally symmetric up to translation.
Thus, given a subpolytope $P$ of a hypersimplex or cube,
we will call any ordered pair $(B,B')$ of vertices of~$P$ an \emph{antipode}, 
thinking of this pair of vertices within the face $F$.

If $B$ and $B'$ are two vertices of~$P$ representing bases or feasible sets, 
then finding a strong exchange between $B$ and~$B'$ translates to
finding another nearby pair $(C, C')$ of vertices such that $(B, B')$ and $(C, C')$ are antipodes of the same face $F$ of the ambient polytope.
We will say that the antipode $(B,B')$ is \emph{peerless} if $P$ contains no other antipodes in~$F$,
and that $(B,B')$ is \emph{isolated} if $P$ contains no other vertices in~$F$ at all.
Then \Cref{thm:A}(\ref{it:5-antipodes}) says that $P$ is a strong $\Delta$-matroid
if and only if it contains no peerless antipodes in faces of the smallest nontrivial dimensions,
and no isolated antipodes in any (higher-dimensional) face.
The counterpart for matroids, which we give in \Cref{ssec:matroids-antipodes}, is folklore, related to the argument sketched in the end of \cite[Section 1.9]{BorovikGelfandWhite2003}.
In our characterisations, the peerless condition is the ``local'' case of strong exchange.
The isolated condition is the weakest possible ``global'' statement we can use to supplement the local statement to an equivalence,
in the sense that up to symmetry, it excludes only one minimal counterexample subpolytope in each dimension of face, namely the long diagonal.
Without further ado, here is the full \Cref{thm:A}.

\renewcommand{\thetheorem}{A}
\begin{theorem} \label{thm:A}
    Let $M$ be a $\Delta$-matroid.
    The following are equivalent:
    \begin{enumerate}
        \item $M$ is a strong $\Delta$-matroid (i.e.\ satisfies the hyperplane exchange property).\label{it:1-strong}
        \item $M$ satisfies the weak Wenzel exchange property.\label{it:2-weak-wenzel}
        \item $\bar{M}$ is an even $\Delta$-matroid.\label{it:3-even-Delta}
        \item $M$ has no peerless antipode in any $k$-cube for $k \geq 3$.\label{it:4-cubes}
        \item $M$ has no peerless antipode in any $3$-cube or $4$-cube and has no isolated antipode in any $k$-cube for $k \geq 5$.\label{it:5-antipodes}
    \end{enumerate}
\end{theorem}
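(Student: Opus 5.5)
I would prove the equivalences as a cycle, \eqref{it:1-strong}$\Rightarrow$\eqref{it:2-weak-wenzel}$\Rightarrow$\eqref{it:3-even-Delta}$\Rightarrow$\eqref{it:4-cubes}$\Rightarrow$\eqref{it:1-strong}, and then attach \eqref{it:5-antipodes} by proving \eqref{it:4-cubes}$\Leftrightarrow$\eqref{it:5-antipodes}. Throughout I would work in the polyhedral language. A $k$-cube face $F$ of the ambient cube is determined by a pair $B,B'$ with $|B\symDiff B'|=k$. It consists of the sets $B\symDiff X$ with $X\subseteq B\symDiff B'$, and an antipode in $F$ is a pair $(C,C')$ of feasible sets with $C\symDiff C' = B\symDiff B'$.

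\textbf{The cycle.} For \eqref{it:1-strong}$\Rightarrow$\eqref{it:2-weak-wenzel}, I would take feasible $B,B'$ and $x\in B\symDiff B'$, and apply the hyperplane exchange property of \Cref{ss:strong} to the antipodal pair in the face $F$. I would choose the separating hyperplane normal to the direction $e_x$ together with the edge direction $e_x\pm e_y$. The two new vertices then have the form $B\symDiff\{x,y\}$ and $B'\symDiff\{x,y\}$, which is exactly weak Wenzel exchange. For \eqref{it:2-weak-wenzel}$\Rightarrow$\eqref{it:3-even-Delta}, I would recall that $\bar M$ adjoins a new element $e$ to make every feasible set even. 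I would then check even exchange case by case, according to whether $e$ lies in $F\symDiff F'$ and whether the chosen $x$ equals $e$. Weak Wenzel supplies a $y$ with $y\ne x$ whenever it is needed, and the new element $e$ absorbs the case $y=x$.

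For \eqref{it:3-even-Delta}$\Rightarrow$\eqref{it:4-cubes}, let $(B,B')$ be an antipode in a $k$-cube with $k\ge 3$ and pick $x\in B\symDiff B'$. Exchange in $\bar M$ with $\bar B,\bar B'$ gives $y$ with $\bar B\symDiff\{x,y\}$ feasible. Even $\Delta$-matroids satisfy symmetric exchange (Wenzel), so $\bar B'\symDiff\{x,y\}$ is feasible as well. Projecting away $e$ yields a second antipode in the same face. It is distinct from $(B,B')$ because $k\ge 3$ forces the exchanged set not to be $B'$. For \eqref{it:4-cubes}$\Rightarrow$\eqref{it:1-strong}, I would induct on $|B\symDiff B'|$. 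A non-peerless face contains another antipode $(C,C')$. Intersecting with a hyperplane through the center shows that some pair of such antipodes straddles it, which is what the hyperplane exchange property needs. If the straddling pair is not adjacent, I would pass to a smaller subcube and apply induction.

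\textbf{Attaching \eqref{it:5-antipodes}.} The direction \eqref{it:4-cubes}$\Rightarrow$\eqref{it:5-antipodes} is immediate, since an isolated antipode is in particular peerless. The main obstacle is \eqref{it:5-antipodes}$\Rightarrow$\eqref{it:4-cubes}. I would argue by minimal counterexample: take a peerless antipode $(B,B')$ in a $k$-cube $F$ with $k\ge 5$ minimal. Because it is not isolated, there is a third feasible set $C\in F$. Let $a=|B\symDiff C|$, and assume $a$ is minimal among all choices of $C$ and of ordering of the pair. Then $(B,C)$ is an antipode in an $a$-cube and $(C,B')$ is an antipode in a $(k-a)$-cube. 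Minimality of $k$, together with the Δ-matroid axiom in dimensions $\le 2$, makes these lower-dimensional antipodes non-peerless. Their peers give feasible sets $D,D'$ with $D\symDiff D'=B\symDiff C$ and $E,E'$ with $E\symDiff E'=C\symDiff B'$. I would then try to combine these, using the ordinary Δ-matroid exchange applied to $D'$ and $E$, to produce an antipode in $F$ other than $(B,B')$. Failing that, the combination should give a vertex closer to $B$ than $C$, contradicting the minimality of $a$.

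The delicate part is the bookkeeping. This step is also where the thresholds matter: peerlessness must be assumed in $3$- and $4$-cubes, because for $k\le 4$ the split $a+(k-a)$ can leave a $2$-cube on both sides, and $2$-cubes carry no information. So I expect the case analysis to break down exactly when $k\le 4$. Checking that $k\ge 5$ always leaves enough room would be the heart of the proof.
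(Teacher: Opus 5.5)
Your cycle breaks at its first link. The hyperplane exchange property \eqref{axiom:strong-symm-ex} is purely existential: for distinct $A,B$ it only yields \emph{some} $a,b\in A\symDiff B$. You cannot choose the hyperplane, let alone prescribe the element $x$. Prescribing $x$ is exactly what the weak Wenzel property adds, so your argument for \eqref{it:1-strong}$\Rightarrow$\eqref{it:2-weak-wenzel} assumes its conclusion. This implication is the main content of \Cref{thm:hyperplane-equals-wenzel}. The paper proves it as \eqref{it:1-strong}$\Rightarrow$\eqref{it:3-even-Delta} and then uses Murota for \eqref{it:3-even-Delta}$\Rightarrow$\eqref{it:2-weak-wenzel}. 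The argument is an induction on $n$:
\begin{itemize}
\item restriction to faces (\Cref{lem:restriction}) gives weak Wenzel for pairs with $|I\symDiff J|<n$;
\item when $|I\symDiff J|=n$ is odd, any hyperplane exchange lifts, since $*\in\bar I\symDiff\bar J$ absorbs the case $a=b$;
\item when $|I\symDiff J|=n$ is even, \Cref{lem:even+antipode+exchange} upgrades an exchange with $a=b$ to one with $a\neq b$. Its proof rests on \Cref{lem:odd+antipode+exchange}: odd-length pairs in a strong $\Delta$-matroid admit a simultaneous single-element exchange.
\end{itemize}
There is also a smaller slip in your \eqref{it:2-weak-wenzel}$\Rightarrow$\eqref{it:3-even-Delta}. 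When $|F\symDiff F'|$ is even, weak Wenzel may return $y=x$, and the new element is not in $\bar F\symDiff\bar F'$, so it cannot absorb that case. You need a single-element exchange for odd-length pairs, applied to $F\symDiff\{x\}$ and $F'$. Your \eqref{it:3-even-Delta}$\Rightarrow$\eqref{it:4-cubes} and \eqref{it:4-cubes}$\Rightarrow$\eqref{it:5-antipodes} are fine.

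The two hard antipode directions are not established either.

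For \eqref{it:4-cubes}$\Rightarrow$\eqref{it:1-strong} you need $B\symDiff Y$ and $B'\symDiff Y$ in $M$ for the \emph{same} $Y$ with $|Y|\le 2$. Induction in the subcube spanned by $B$ and a peer vertex $C$ only produces $B\symDiff Y$ and $C\symDiff Y$, and nothing transfers $Y$ to $B'$. The straddling hyperplane gives no control on distance. The paper does not argue this directly. It proves \eqref{it:4-cubes}$\Rightarrow$\eqref{it:3-even-Delta} (\Cref{lem:peerless+implies+lift+even+delta}) by checking that $\bar M$ meets the criterion of \Cref{prop:type+2+D}. That check uses the $4$-demicube case analysis (\Cref{lem:peerless+cubes+imply+demicubes}), the projection \Cref{lem:demi+projection}, and a parity argument. \Cref{prop:type+2+D} in turn rests on the Gelfand--Serganova edge description, via the $2$-face argument of \Cref{prop:antipode-implies-delta-matroid-2}.

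For \eqref{it:5-antipodes}$\Rightarrow$\eqref{it:4-cubes}, your claim that minimality makes $(B,C)$ non-peerless fails when $a\le 2$. A $1$-cube antipode is always peerless, and the $\Delta$-matroid $\{\emptyset,\{1,2\}\}$ has a peerless antipode in a $2$-cube. The combining step, which you yourself call the heart of the proof, is left open. The paper instead inducts on $k$, restricts to the $k$-cube, and shows that $\overline{M_k}$ satisfies \Cref{prop:type+2+D}. The idea you are missing is parity in the lift:
\begin{itemize}
\item suppose $(\bar I,\bar J)$ were isolated with $*\in\bar I\cap\bar J$;
\item non-isolation in $M_k$ gives a third vertex $K$ of the opposite parity;
\item the longer of $(I,K)$ and $(K,J)$ then has odd length in $[3,k-1]$, so by induction it has a peer;
\item that peer must contain a vertex with the parity of $I$, distinct from $I$ and $J$, which contradicts isolation.
\end{itemize}
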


By a result by Murota \cite[Theorem 3.1]{Murota2021}, it was previously known that \eqref{it:2-weak-wenzel} and \eqref{it:3-even-Delta} are equivalent.
In the following we show that \eqref{it:1-strong}, \eqref{it:2-weak-wenzel} and \eqref{it:3-even-Delta} are equivalent in \Cref{thm:hyperplane-equals-wenzel}.
We show \eqref{it:1-strong} and \eqref{it:4-cubes} are equivalent in \Cref{thm:strong+iff+no+peerless} and the equivalency with \eqref{it:5-antipodes} in \Cref{prop:type+2+B}.

Our introduction of peerless antipodes also has algebraic motivations.
Analogous to the relationship between matroids and the Grassmannian, $\Delta$-matroids on $[n]$ are related to the orthogonal Grassmannian $\OGr(n,2n+1)$, also known as the maximal isotropic Grassmannian.
We shed light on this relationship in a previous paper \cite{CDFS1}, demonstrating that there exists a defining set of equations for $\OGr(n,2n+1)$ whose tropicalisation characterises strong $\Delta$-matroids.
These tropical equations are called the \emph{strong exchange equations}, and combinatorially characterise when a set system satisfied the hyperplane exchange property.
In this article, we introduce the \emph{peerless antipode equations} $\cG_n$ (see \Cref{defn:peerless+antipode+equations}), tropical equations that characterise when a set system has no peerless antipodes.
These equations do not live in a vacuum: they arise as the tropicalisation of a very special family of equations cutting out $\OGr(n,2n+1)$.

\renewcommand{\thetheorem}{B} 
\begin{theorem}\label{thm:B}
    Let $\OGr(n,2n+1) \subset \PP^{2^n-1}$ denote the orthogonal Grassmannian with defining ideal $\cI_n$.
    There exists a generating set $\cP_n$ of $\cI_n$ such that:
    \begin{itemize}
        \item $\cP_n$ forms a basis for the degree two graded piece of $\cI_n$,
        \item the tropicalisation of $\cP_n$ is $\cG_n$.
    \end{itemize}
\end{theorem}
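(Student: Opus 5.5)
We will construct $\cP_n$ explicitly, starting from the Wick relations for pure spinors. The plan is to work in the spinor coordinates $x_S$, $S\subseteq[n]$, on $\PP^{2^n-1}$, in which $\OGr(n,2n+1)$ is the variety of pure spinors. By the classical result of Cartan and Chevalley, already used in \cite{CDFS1}, its ideal $\cI_n$ is generated by quadrics, namely the Wick relations. For a pair of sets $(A,B)$ with $|A\symDiff B|$ large enough, these relations are attached to the face of the cube in which $A$ and $B$ are antipodal. So the first step is to fix, for every face $F$ of the cube of dimension $k\ge 3$ and every antipode $(A,B)$ in $F$, a quadric. It should involve exactly the monomials $x_Cx_{C'}$ with $(C,C')$ antipodal in $F$ and of the same parity as $(A,B)$. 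On the tropical side, $\cG_n$ then says that the minimum of the corresponding tropical monomials is attained at least twice, which is precisely the absence of a peerless antipode in $F$.

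Concretely, I would take $\cP_n$ to consist of the quadrics
\[
 x_Ax_B+\sum \pm x_Cx_{C'},
\]
where $(A,B)$ runs over antipodes modulo the symmetries of each face and $(C,C')$ runs over the other antipodal pairs in the same face of matching parity. The coefficients are chosen by taking suitable linear combinations of the Wick relations supported on that face. The key steps are as follows.

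(i) Show that, after restricting to a face $F$ (equivalently, contracting and deleting coordinates outside $F$, as the orthogonal Grassmannian restricts to a smaller $\OGr$), the span of the Wick relations supported on antipodal pairs of $F$ has a basis in \emph{reduced row echelon form}. In this form each basis element has exactly one ``leading'' monomial $x_Ax_B$, and its other terms have nonzero coefficients on all remaining antipodal monomials of that face. This has to be a nonvanishing-of-all-coefficients statement, since tropicalisation records only the support.

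(ii) Show that the elements over all faces together form a basis of $(\cI_n)_2$. I would do this by a dimension count. Every degree-two element of $\cI_n$ is multihomogeneous for the torus grading, whose degrees are indexed exactly by (face, parity) classes of antipodal pairs. So $(\cI_n)_2$ splits as a direct sum over faces. The dimension of each summand is $\dim S^2$ of the spinor representation minus the dimension of the corresponding weight space of $V_{2\omega_n}$, and this should match the number of leading terms chosen per face.

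(iii) Since $\cI_n$ is generated in degree $2$, the set $\cP_n$ generates $\cI_n$. Finally, reading off the supports gives $\trop(\cP_n)=\cG_n$.

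The main obstacle is step (i), proving that every coefficient in the echelon basis is nonzero. My first attempt would be to use the $\ZZ_2^n$ and signed-permutation symmetry to reduce to the full cube $F=[n]$ with a fixed leading pair. I would then compute the reduced basis via the explicit Pfaffian description of pure spinors: $x_S=\mathrm{Pf}(X_S)$ for a skew matrix $X$. Under this description the quadrics become Pfaffian identities whose coefficients are $\pm1$ and can be matched by induction on $k$. If nonvanishing fails for some terms in high dimension, the fallback is to choose a different basis per face (for example, a symmetrised sum) and check the support directly.
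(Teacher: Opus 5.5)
Your proposal has two genuine gaps, and as set up it cannot give $\trop(\cP_n)=\cG_n$.

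\textbf{Wrong supports.} Each $g_{I,J}\in\cG_n$ is a sum over \emph{all} antipodes of $\cube{I}{J}$. When $|I\symDiff J|$ is even, these fall into two classes (both sets even, or both odd), and $g_{I,J}$ contains both. A quadric supported only on antipodal pairs ``of the same parity as $(A,B)$'' tropicalises, on an even-dimensional face, to the sum over the antipodes of a single demicube. Distinct cubes give disjoint monomial supports, so this is not an element of $\cG_n$. The condition it encodes is also different: it forbids a peerless antipode in the demicube, not in $F$.

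The grading by (face, parity) that you invoke comes from the rank-$(n+1)$ torus of type $D_{n+1}$, via $\OGr(n,2n+1)\cong\OGr_+(n+1,2n+2)$. For the $B_n$ torus, the weight of $x_Cx_{C'}$ records only the face. A basis homogeneous for the finer grading is therefore forced to split parities. The basis you need must instead be homogeneous only for the $B_n$ torus, and its elements must mix the two classes.

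\textbf{Reduced echelon form destroys full support.} Step (i) fails independently of the parity issue. Whenever a face carries $d\ge 2$ basis elements, each row of a reduced echelon basis vanishes at the other $d-1$ pivot monomials, so no row tropicalises to $g_{I,J}$. This already happens for the full $5$-cube when $n=5$, which carries six elements (\Cref{ex:B5}). The finer grading does not split these six, because the face is odd-dimensional. Full support is the whole content of the second bullet, and your fallback leaves it unproved.

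The paper avoids both problems. \Cref{t:BspinEqs}, quoted from \cite{CDFS1}, already supplies a basis $p^M_{N,L}$ with coefficient $\pm1$ on \emph{every} antipode of $Q_{N,L}$. \Cref{prop:tropicalisation} then only has to check, by cases mod $4$, that $\{M : 2|M|\in\cM_{N,L}\}$ is empty when $\dim Q_{N,L}\le 2$ and nonempty when $\dim Q_{N,L}\ge 3$.

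Your representation-theoretic outline can be repaired without that explicit basis, in three steps.
\begin{enumerate}
\item Dimension count. Take an $m$-face with $B_n$-weight $\mu$. Then $\dim(\cI_n)_{2,\mu}$ equals the number of antipodal pairs of the face minus $\binom{m}{\lfloor m/2\rfloor}$, the multiplicity of $\mu$ in $V_{2\omega_n}\cong\Lambda^n\CC^{2n+1}$. This is $0$ for $m\le 2$ and $2^{m-1}-\binom{m}{\lfloor m/2\rfloor}\ge 1$ for $m\ge 3$. It gives $1,2,6$ for $m=3,4,5$, matching \Cref{ex:B5}.
\item No identically vanishing coefficient. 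For a free coordinate $i$ of the face, lifts of the sign change $s_{\epsilon_i}$ fix $\mu$, preserve $\cI_n$, and send $x_C$ to a nonzero multiple of $x_{C\symDiff\{i\}}$. These act transitively on the antipodal pairs of the face. Hence no antipodal monomial has identically zero coefficient on $(\cI_n)_{2,\mu}$.
\item Generic basis. A generic basis of each such weight space therefore has full support. The union of these bases over faces of dimension at least $3$ is a valid $\cP_n$.
\end{enumerate}
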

\renewcommand{\thetheorem}{\arabic{section}.\arabic{theorem}}

In particular, when coupled with \Cref{thm:A} it follows that a set system on $[n]$ is a strong $\Delta$-matroid if and only if it satisfies the tropicalisation of $\cP_n$.

Despite \Cref{thm:B} being an algebraic statement, we take an entirely combinatorial viewpoint on its proof.
The first part of the statement was already shown in \cite{CDFS1}, which we recall in \Cref{t:BspinEqs}.
The second part is proved in \Cref{prop:tropicalisation} and is entirely combinatorial.

Unlike what's true for the strong exchange equations, many equations in $\cP_n$ tropicalise to the same peerless antipode equation.
In particular, there are many lower support equations in the span of $\cP_n$ that are forgotten in the tropicalisation process. The approach in \cite{CDFS1} was to only tropicalise equations of minimal support to avoid this problem.
It is somewhat surprising then that the peerless antipode equations are sufficient to characterise strong $\Delta$-matroids (see Remark \ref{rem:surprising tropicalisation}).

\paragraph*{Acknowledgements} 
The authors thank Matt Baker and Joseph Kung for their insight into the literature,
and Matt Larson for enlightening discussions.

The second author received support as part of the research project PID2022-138719NA-I00, financed by MCIN/AEI/10.13039/501100011033/FEDER, UE.
The third author received support from the Engineering and Physical Sciences Research Council [grant number EP/X001229/1].
The fourth author received support from the Engineering and Physical Sciences Research Council [grant number EP/X036723/1].

\section{Exchange properties for $\Delta$-matroids}

\subsection{$\Delta$-matroids}
Throughout we let $M$ be a set system on $[n]$.
We say that $\matroid$ is an (ordinary) \defn{matroid} if it satisfies the following basis exchange property:
\[
    \text{for all } A, B \in \matroid \, , \, a \in A \setminus B , \text{ there exists } b \in B \setminus A\text{ such that } A \setminus \{a\} \cup \{b\} \in \matroid\, .
\]
We we say a set system is \emph{equicardinal} if all sets have the same cardinality.
It is well-known that matroids are equicardinal. 
%There are many cryptomorphic definitions, but the one we will focus on is the strong exchange property for ordinary matroids, as given in \cite[Section 1.5.1]{BorovikGelfandWhite2003}.

% A set system $\matroid$ on $[n]$ satisfies the \defn{strong exchange property for (ordinary) matroids} if
% \[
%     \text{for all distinct }A, B \in \matroid \text{ there exist } a \in A \setminus B,\,b\in B\setminus A \text{ such that } A \symDiff\left\{ a, b \right\}, B\symDiff\left\{ a, b \right\} \in \matroid \, .
% \]
% It is shown in \cite[Theorem 1.5.2]{BorovikGelfandWhite2003} that a set system $\matroid$ satisfies the strong exchange property if and only if it is a matroid.

Matroids are a special case of a more general family of set systems called $\Delta$-matroids, defined via an exchange axiom using symmetric difference.

\begin{definition}[{\cite[Definition 6]{Bouchet1987}}]
    A \defn{$\Delta$-matroid} is a set system $\matroid$ on $[n]$ such that the following \defn{symmetric exchange property for $\Delta$-matroids} is satisfied:
    \begin{equation}\label{axiom:symm-ex}
        \text{For all } A,B \in \matroid \, , \, a \in A \symDiff B, \text{ there exists } b \in A \symDiff B \text{ such that } A \symDiff \{a, b\} \in \matroid \, .
    \end{equation}
    Note that $a$ and $b$ might be equal.
\end{definition}
It is straightforward to verify that matroids are a special case of $\Delta$-matroids.
Explicitly, $M$ is a matroid if and only if $M$ is an equicardinal $\Delta$-matroid.

Another special case of $\Delta$-matroids we are interested in are even $\Delta$-matroids.
We say that a set system $\matroid$ is \defn{even} if all $A \in \matroid$ have the same parity.
Despite the name, we emphasise that the sets in an even set system may be of odd or even cardinality.
We say that $\matroid$ is an \defn{even $\Delta$-matroid} if it is a $\Delta$-matroid and an even set system.
It follows that matroids are also a special case of even $\Delta$-matroids.

A second way to understand $\Delta$-matroids and their variants is in terms of their polytopes.
Let $\cubeVertex{1}, \dots, \cubeVertex{n}$ be an orthonormal basis for $\RR^n$.
Given $S \subseteq [n]$, we let $\cubeVertex{S} = \sum_{i\in S} \cubeVertex{i}$ be its indicator vector.
Given a set system $M$ on $[n]$, we define the polytope
\[
P(M) := \conv\set{\cubeVertex{A}}{A \in M} \subseteq \RR^n \, .
\]
The following theorem characterises $\Delta$-matroids and their variants in terms of edges of this polytope.
\begin{theorem}[{\cite{GelfandSerganova:1987,BorovikGelfandWhite2003}}]\label{thm:polytope+edges}
    Let $M$ be a set system on $[n]$ and $P(M) \subseteq \RR^n$ its associated polytope.
    \begin{enumerate}
        \item $M$ is a $\Delta$-matroid if and only if $P(M)$ has all edges parallel to some $e_i$ or $e_i \pm e_j$ where $i,j \in [n]$.
        \item $M$ is an even $\Delta$-matroid if and only if $P(M)$ has all edges parallel to some $e_i \pm e_j$ where $i,j \in [n]$.
        \item $M$ is a matroid if and only if $P(M)$ has all edges parallel to some $e_i - e_j$ where $i,j \in [n]$.
    \end{enumerate}
\end{theorem}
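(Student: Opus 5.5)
We prove the three parts in order. Part~1 carries all the content; parts~2 and~3 follow from it by looking at edge directions.

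\emph{Part 1, forward direction.} Suppose $M$ is a $\Delta$-matroid and let $[\cubeVertex{A},\cubeVertex{B}]$ be an edge of $P(M)$. Choose a linear functional $w$ whose maximum over $P(M)$ is attained exactly on this edge. Put $u=\cubeVertex{B}-\cubeVertex{A}$ and $x_i = w_i u_i$ for $i \in A\symDiff B$. For $S\subseteq A\symDiff B$ set $f(S)=\sum_{i\in S}x_i$, so $f(A\symDiff B)=w(u)=0$.

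Assume for contradiction that $\order{A\symDiff B}\ge 3$.
\begin{itemize}
\item Take $a$ maximising $x_a$. Symmetric exchange from $A$ towards $B$ gives $b$ with $A\symDiff\{a,b\}\in M$. Its indicator vector is $\cubeVertex{A}$ plus the restriction of $u$ to $\{a,b\}$. This set is neither $A$ nor $B$, since $\order{A\symDiff B}\ge3$, so $w$ is strictly smaller there than at $\cubeVertex{A}$. Hence $f(\{a,b\})<0$, with $b=a$ allowed. In either case this gives $x_{\min}<-x_{\max}$.
\item Exchange from $B$ towards $A$ at the index $a'$ minimising $x_{a'}$. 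The same argument gives $c$ with $f(\{a',c\})>0$, hence $x_{\min}+x_{\max}>0$.
\end{itemize}
The two conclusions contradict each other. Therefore $\order{A\symDiff B}\le 2$, and the edge direction is parallel to some $e_i$ or $e_i\pm e_j$.

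\emph{Part 1, converse.} Assume every edge has one of these directions. Let $A,B\in M$ and $a\in A\symDiff B$. Let $F$ be the face of $P(M)$ cut out by $x_i=(\cubeVertex{A})_i$ for all $i\notin A\symDiff B$. This is a face because $P(M)$ lies in the cube. It contains $\cubeVertex{A}$ and $\cubeVertex{B}$, and its edges are edges of $P(M)$.

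On $F$ define the functional $w$ by:
\begin{itemize}
\item coefficient $+1$ in the direction from $A$ towards $B$ at coordinate $a$;
\item coefficient $\varepsilon$ in the direction from $B$ towards $A$ at every other $i\in A\symDiff B$, where $\varepsilon$ is small.
\end{itemize}
Then $w(\cubeVertex{B})-w(\cubeVertex{A}) = 1-\varepsilon(\order{A\symDiff B}-1)>0$, so $\cubeVertex{A}$ is not $w$-optimal on $F$. By the simplex principle, some edge of $F$ at $\cubeVertex{A}$ strictly increases $w$.

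Any edge direction $d$ at $\cubeVertex{A}$ inside $F$ is supported in $A\symDiff B$ and moves each coordinate towards $B$. So $w(d)>0$ forces $d$ to involve coordinate $a$. By hypothesis $d$ has support of size at most $2$, so the neighbouring vertex is $A\symDiff\{a\}$ or $A\symDiff\{a,b\}$ with $b\in A\symDiff B$. This is exactly the exchange axiom.

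The main obstacle is Part~1. In the forward direction it is the extremal choice of $a$ and $a'$ that makes the two one-sided exchanges contradict each other. In the converse it is the weighting of $w$ that forces the improving edge to use $a$.

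\emph{Parts 2 and 3.} These are now quick. Along an edge parallel to $e_i\pm e_j$ the parity of the vertex is preserved, and along $e_i-e_j$ the cardinality is preserved. The edge graph of $P(M)$ is connected. Hence:
\begin{itemize}
\item all edges are of the form $e_i\pm e_j$ if and only if $M$ is an even $\Delta$-matroid, using Part~1 for the $\Delta$-matroid property;
\item all edges are of the form $e_i-e_j$ if and only if $M$ is an equicardinal $\Delta$-matroid, that is, a matroid.
\end{itemize}
For the converse directions, note that an edge parallel to $e_i$ or $e_i+e_j$ joins vertices of different parity or different cardinality, so it cannot occur in the even, respectively equicardinal, case.
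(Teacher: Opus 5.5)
Your argument is correct. The paper gives no proof of this theorem; it cites \cite{GelfandSerganova:1987,BorovikGelfandWhite2003}, and your route is genuinely different from the one taken there.

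\textbf{The cited route.} In \cite{BorovikGelfandWhite2003} all three parts are cases of the Gelfand--Serganova theorem for Coxeter matroids. First, each combinatorial exchange axiom (symmetric exchange, its even version, basis exchange) is identified with the maximality property for the appropriate Bruhat-type order. Then the Gelfand--Serganova theorem identifies the maximality property with every edge of the polytope being parallel to a root of type $B$, $D$ or $A$ respectively.

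\textbf{Your route.} You bypass orders and Weyl-group combinatorics entirely and work directly with linear functionals on a $0/1$-polytope.
\begin{itemize}
\item In the forward direction, the extremal choices of $a$ and $a'$ turn the two one-sided exchanges into the incompatible inequalities $x_{\min}+x_{\max}<0$ and $x_{\min}+x_{\max}>0$.
\item In the converse, the perturbed functional on the face $P(M)\cap\cube{A}{B}$ forces the improving edge at $\cubeVertex{A}$ to use the coordinate $a$.
\item Parts 2 and 3 then follow from part 1, because parity (resp.\ cardinality) is invariant along the allowed edges and the edge graph is connected. In the Coxeter framework they are instead separate instances of the general theorem, each needing its own identification of the exchange axiom with the maximality property.
\end{itemize}

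\textbf{What each buys.} The cited approach is uniform over all root systems and parabolic subgroups, which is the setting in which the paper's hyperplane exchange property is formulated. Yours gives a short, self-contained proof. Its converse is exactly the direction the paper relies on in Propositions~\ref{prop:antipode-implies-delta-matroid} and~\ref{prop:antipode-implies-delta-matroid-2}, so it would make those propositions independent of the citation.

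\textbf{Minor points.}
\begin{itemize}
\item The identity $f(A\symDiff B)=0$ is never actually used. The contradiction comes from $\cubeVertex{A}$ and $\cubeVertex{B}$ both being $w$-maximal.
\item Your final ``respectively'' should be spelled out. Evenness excludes only edges parallel to $e_i$. Equicardinality excludes edges parallel to $e_i$ and also those parallel to $e_i+e_j$.
\item You tacitly read $e_i\pm e_j$ with $i\neq j$, which is the intended reading.
\end{itemize}
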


This theorem places $\Delta$-matroids and their variants in the framework of \defn{Coxeter matroids}, introduced by Borovik, Gelfand and White.
It suffices for us to think of our cases of Coxeter matroids as zero-one polytopes whose edges are all parallel to a root in some fixed root system $\Phi$; see \cite{BorovikGelfandWhite2003} or \cite{CDFS1} for full definitions.
In particular, \Cref{thm:polytope+edges} can be rephrased as the statement that matroids, even $\Delta$-matroids and $\Delta$-matroids are Coxeter matroids of the root systems $A_n$, $D_n$ and $B_n$ respectively.
%\benIL{Maybe need more information for $E_6/E_7$, but seemed reasonable for most of the paper}

\subsection{Strong $\Delta$-matroids}\label{ss:strong}

Borovik, Gelfand and White introduced \emph{strong Coxeter matroids} as the Coxeter matroids satisfying a ``strong exchange property''.
Unfortunately, Wenzel and authors after him had introduced different ``strong exchange properties'' for $\Delta$-matroids.
As these are a case of Coxeter matroids, the result has been terminological confusion:
every property we introduce in this subsection has been called ``strong exchange''.
When we review these properties below, we rename them all so we can talk about them unambiguously.

Borovik, Gelfand and White defined their strong exchange property in terms of hyperplane reflections of the Weyl group associated to $\Phi$, so we shall rename it the \emph{hyperplane exchange property}.
We will present only its specialisation to matroids, even $\Delta$-matroids and $\Delta$-matroids.
For the full definition, we refer to \cite{BorovikGelfandWhite2003,CDFS1}.

The \defn{hyperplane exchange property for (ordinary) matroids} is
\begin{equation}\label{axiom:strong-basis-ex}
    \text{For all distinct } A, B \in \matroid, \text{ there exist } a \in A \setminus B,\,b\in B\setminus A \text{ such that } A \setminus \{ a\} \cup \{b\}, B \setminus \{ b\} \cup \{a\} \in \matroid \, .
\end{equation}
Despite originally being named the strong exchange property, the hyperplane exchange property turns out to be equivalent to the basis exchange property for matroids.
In particular, it is shown in \cite[Theorem 1.5.2]{BorovikGelfandWhite2003} that a set system $\matroid$ satisfies the hyperplane exchange property if and only if it is a matroid.
Note that there are many other variants and strengthenings of the basis exchange property that all turn out to be equivalent.

The \defn{hyperplane exchange property for even $\Delta$-matroids} is% which was first defined by Wenzel in \cite{Wenzel1993}:
\begin{equation}\label{axiom:even-strong-symm-ex}
    \text{For all distinct } A,B \in \matroid, \text{ there exist distinct }a,\, b  \in A \symDiff B,\, \text{ such that } A \symDiff \{a,b\} \, , \, B \symDiff \{a,b\} \in M \, .
\end{equation}
As with matroids, this property in fact characterises even $\Delta$-matroids.
Furthermore, \defn{Wenzel's exchange property} --
\begin{equation}\label{axiom:wenzel-exchange}
    \text{For all } A,B \in \matroid, a \in A \symDiff B, \text{ there exists } b \in (A \symDiff B) \setminus a \text{ such that } A \symDiff \{a,b\} \, , \, B \symDiff \{a,b\} \in M \, .
\end{equation}
-- while apparently stronger, turns out to be equivalent, and also characterises even $\Delta$-matroids.
% As a corollary of Wenzel \cite[Theorem 1]{Wenzel1993}, it was shown in \cite[Theorem 2.9]{CDFS1} that a set system $M$ satisfies the strong exchange property for even $\Delta$-matroids if and only if is an even $\Delta$-matroid. \ben{Add type D equivalence theorem explicitly}
%Note that, although the paper \cite{Wenzel1993} is written in the generality of $\Delta$-matroids, 

\begin{theorem}[{\cite{Wenzel1993,CDFS1}}]
    \label{t:evenIsStrong}
    Let $M \subseteq [n]$. The following are equivalent:
    \begin{enumerate}
        \item $M$ is an even $\Delta$-matroid; \label{i:delta+1}
        \item $M$ satisfies the hyperplane exchange property for even $\Delta$-matroids; \label{i:delta+2}
        \item $M$ satisfies Wenzel's exchange property. \label{i:delta+3}
    \end{enumerate}
\end{theorem}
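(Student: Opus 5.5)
The plan is to prove the cycle of implications (\ref{i:delta+3})$\Rightarrow$(\ref{i:delta+2})$\Rightarrow$(\ref{i:delta+1})$\Rightarrow$(\ref{i:delta+3}). Throughout we assume $M$ is nonempty.

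\emph{Wenzel implies hyperplane exchange.} This step is immediate. Given distinct $A,B\in M$, the set $A\symDiff B$ is nonempty, so we may pick any $a\in A\symDiff B$. Wenzel's exchange property then supplies $b\neq a$ with $A\symDiff\{a,b\}$ and $B\symDiff\{a,b\}$ both in $M$. This is exactly the hyperplane exchange property for even $\Delta$-matroids.

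\emph{Hyperplane exchange implies even $\Delta$-matroid.} For evenness, suppose two feasible sets of different parity existed. Among all such pairs choose $A,B$ with $|A\symDiff B|$ minimal. Hyperplane exchange gives $A'=A\symDiff\{a,b\}\in M$. This set has the parity of $A$ and satisfies $|A'\symDiff B|=|A\symDiff B|-2$, contradicting minimality.

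For the symmetric exchange axiom \eqref{axiom:symm-ex}, take $A,B\in M$ and $a\in A\symDiff B$. We argue by induction on $|A\symDiff B|$. Apply hyperplane exchange to $(A,B)$ to obtain $\{c,d\}\subseteq A\symDiff B$ with $A\symDiff\{c,d\}\in M$ and $B\symDiff\{c,d\}\in M$.
\begin{itemize}
\item If $a\in\{c,d\}$, we are done.
\item Otherwise, apply induction to the pair $(A, B\symDiff\{c,d\})$. Their symmetric difference is $(A\symDiff B)\setminus\{c,d\}$, which still contains $a$. Induction gives $b$ in this set, hence $b\in A\symDiff B$, with $A\symDiff\{a,b\}\in M$.
\end{itemize}
Evenness forces $b\neq a$, as needed later.

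\emph{Even $\Delta$-matroid implies Wenzel.} This is the substantive direction, and I would use Theorem~\ref{thm:polytope+edges}(2): all edges of $P(M)$ are parallel to some $e_i\pm e_j$. Fix $A,B\in M$ and $a\in A\symDiff B$, and induct on $|A\symDiff B|$. Since we may apply the symmetry $S\mapsto S\symDiff B$, which preserves being an even $\Delta$-matroid, we can assume $B=\emptyset$, so $a\in A$.

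The base case $|A|=2$ is trivial: take $b$ to be the other element of $A$.

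For the inductive step, let $F$ be the face of the cube $[0,1]^n$ on which the coordinates outside $A$ vanish. Then $P(M)\cap F$ is the polytope of the restricted even $\Delta$-matroid and contains both $\cubeVertex{A}$ and $0$. Consider the linear functional $x\mapsto x_a$ on this polytope.

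\begin{itemize}
\item I would first try to show there is a vertex $C$ with $a\notin C$ that is adjacent to $\cubeVertex{A}$. Walking down edges from $\cubeVertex{A}$ in the direction decreasing $x_a$, the first such edge has direction $-e_a\pm e_j$. This gives $A\symDiff\{a,j\}\in M$ with $j\in A$.
\item Symmetrically, at $0$ the functional $x_a$ can be increased along an edge from $0$, giving $\{a,k\}\in M$ with $k\in A$.
\end{itemize}

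The difficulty is getting the same $b$ for both sides. Simply picking any edge at each endpoint does not force $j=k$. My plan for this obstacle is to choose the functional more carefully. I would take $\lambda=e_a-\sum_{i\in A\setminus a}\epsilon_i e_i$ with generic small weights $\epsilon_i$, then look at the edges leaving $\cubeVertex{A}$ and entering $0$ that are chosen by a common generic tie-break. Failing that, I would fall back on Wenzel's original combinatorial argument. That argument is an induction where one first exchanges $(A,\{a,k\})$ to shrink the symmetric difference while keeping $a$ available, and then transfers the pair back. Since the statement is attributed to \cite{Wenzel1993,CDFS1}, citing Wenzel's Theorem~1 for this step is acceptable if the polyhedral argument stalls.
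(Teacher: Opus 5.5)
Your implications (3)$\Rightarrow$(2) and (2)$\Rightarrow$(1) are correct and self-contained. In the parity argument, the case $|A\symDiff B|=1$ is excluded directly, since hyperplane exchange demands two distinct elements of $A\symDiff B$. The paper itself gives no proof of this theorem: it is quoted from Wenzel and from the authors' earlier paper. So everything hinges on (1)$\Rightarrow$(3), and there your proposal has a genuine gap.

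After twisting by $B$ and restricting to the interval $[\emptyset,A]$, set $X=\{k : \{a,k\}\in M\}$ and $Y=\{j : A\setminus\{a,j\}\in M\}$. Your edge walks at $0$ and at $\cubeVertex{A}$ prove only $X\neq\emptyset$ and $Y\neq\emptyset$. That is the ordinary exchange axiom at each endpoint separately, whereas Wenzel's property is exactly the assertion $X\cap Y\neq\emptyset$.

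The proposed generic functional cannot bridge this. Take $\lambda=e_a-\sum_i\epsilon_i e_i$ with small, nearly equal, generic weights. Its maximiser on the restricted polytope is the vertex of $\{a,k^*\}$, with $k^*$ the lightest element of $X$. Its minimiser is the vertex of $A\setminus\{a,j^*\}$, with $j^*$ the lightest element of $Y$. These agree if and only if the lightest element of $X\cup Y$ already lies in $X\cap Y$, so choosing weights that make them agree presupposes the conclusion. Edge directions are local information at each vertex, and nothing in Theorem~\ref{thm:polytope+edges} couples the neighbourhoods of two antipodal vertices; a genuinely global argument is needed.

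Your fallback sketch does not supply that argument either. With $B=\emptyset$, the pair $(A,\{a,k\})$ has symmetric difference $A\setminus\{a,k\}$, which no longer contains $a$, so $a$ is not `kept available'. The pair that shrinks the symmetric difference while retaining $a$ is $(\{a,k\},A\setminus\{a,j\})$ with $k\in X$, $j\in Y$, $j\neq k$. Induction there yields $w$ with $\{k,w\}\in M$ and $A\setminus\{j,w\}\in M$, which merely reproduces the same situation on a new pair. Some further idea is needed to make this process terminate, and that closing step is the whole content of Wenzel's theorem.

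As written, your proposal proves the two easy implications and, for the hard one, rests entirely on citing Wenzel's Theorem 1. That is how the paper treats the result, so the citation is acceptable in context. However, the polyhedral route you outline would not by itself establish (1)$\Rightarrow$(3).
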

% Recall that a Coxeter matroid is strong if it satisfies the hyperplane exchange property.
% As a corollary of this result, it follows that every even $\Delta$-matroid is strong.

\noindent
Finally, we turn to $\Delta$-matroids.
The \defn{hyperplane exchange property for $\Delta$-matroids} is
\begin{equation}\label{axiom:strong-symm-ex}
    \text{For all distinct } A,B \in \matroid, \text{ there exist }a,\, b  \in A \symDiff B,\, \text{ such that } A \symDiff \{a,b\} \, , \, B \symDiff \{a,b\} \in M \, .
\end{equation}
We emphasise that \eqref{axiom:strong-symm-ex} again allows $a, b$ to be equal.
If a set system $M$ satisfies the hyperplane exchange property for $\Delta$-matroids, then it is a $\Delta$-matroid.
However, in contrast to matroids and even $\Delta$-matroids, the converse is not true as the following counterexample demonstrates.

\begin{example} \label{ex:delta+not+strong}
    Let $M$ be the following set system on $[3]$:
    \[
        M = \{\emptyset, 1, 2, 3, 123\} \, .
    \]
    The corresponding polytope $P(M)$ is displayed in \Cref{fig:3D-counterexample}, where checking the edge directions implies that $M$ is a $\Delta$-matroid.
    However, it does not satisfy the hyperplane exchange property \eqref{axiom:strong-symm-ex}.
    Taking $A = \emptyset$ and $B = 123$, we see there is no choice of $a$ and $b$ such that both $A \symDiff \{a,b\}$ and $B \symDiff \{a,b\}$ are contained in $M$, as at least one must have cardinality two.
\end{example}

\begin{figure}[ht]
    \centering
    \begin{tikzpicture}
        [
            scale=2.0,
            vertex/.style={inner sep=1.4pt,circle,draw=black,fill=black,thick},
        ]
        \coordinate (bl1) at (0,0);
        \coordinate (br1) at (1,0);
        \coordinate (tl1) at (0,1);
        \coordinate (tr1) at (1,1);

        \coordinate (bl2) at (0.5,0.8);
        \coordinate (br2) at (1.5,0.8);
        \coordinate (tl2) at (0.5,1.8);
        \coordinate (tr2) at (1.5,1.8);
        
        \draw[ultra thick] (bl1) -- (br1);
        \draw[ultra thick] (bl1) -- (tl1);
        \draw (br1) -- (tr1);
        \draw (tl1) -- (tr1);

        \draw[dashed] (bl2) -- (br2);
        \draw[dashed] (bl2) -- (tl2);
        \draw (br2) -- (tr2);
        \draw (tl2) -- (tr2);

        \draw[dashed, ultra thick] (bl1) -- (bl2);
        \draw (tl1) -- (tl2);
        \draw (br1) -- (br2);
        \draw (tr1) -- (tr2);

        \draw[ultra thick] (br1) -- (tr2);
        \draw[ultra thick] (tl1) -- (tr2);
        \draw[ultra thick, dashed] (bl2) -- (tr2);
        \draw[ultra thick, dashed] (tl1) -- (bl2);
        \draw[ultra thick, dashed] (br1) -- (bl2);
        \draw[ultra thick] (tl1) -- (br1);

        \node[below left] at (bl1) {$\emptyset$};
        \node[above right] at (tr2) {$123$};
        \node[vertex] at (bl1) {};
        \node[vertex] at (tr2) {};
        \node[vertex] at (br1) {};
        \node[vertex] at (tl1) {};
        \node[vertex] at (bl2) {};
        
        \fill[opacity=0.15] (bl1) -- (tl1) -- (tr2) -- (br1) -- (bl1);
    \end{tikzpicture}
    \caption{An example of a $\Delta$-matroid that is not a strong $\Delta$-matroid from \Cref{ex:delta+not+strong}.}
    \label{fig:3D-counterexample}
\end{figure}

In light of this, we say a $\Delta$-matroid is \defn{strong} if it satisfies the hyperplane exchange property for $\Delta$-matroids.
A hierarchy for the various families of $\Delta$-matroids discussed is given in \Cref{fig:hierarchy}.
Recall that we do not use the prefix `strong' for matroids or for even $\Delta$-matroids, as both always satisfy their corresponding strong exchange properties \eqref{axiom:strong-basis-ex} and \eqref{axiom:even-strong-symm-ex}.

\begin{figure}[ht]
    \centering
    \begin{tabular}{ccccccc}
         \fbox{Matroids} & $\subsetneq$ & \fbox{Even $\Delta$-matroids} & $\subsetneq$ & \fbox{Strong $\Delta$-matroids} & $\subsetneq$ & \fbox{$\Delta$-matroids}
    \end{tabular}
    \caption{The hierarchy of the families of $\Delta$-matroids discussed.}
    \label{fig:hierarchy}
\end{figure}

We close this section with two alternative characterisation of strong $\Delta$-matroids.
As even $\Delta$-matroids have Wenzel's exchange property, $\Delta$-matroids have another exchange property that we call the \emph{weak Wenzel exchange property}:
%\footnote{Even this property sometimes appears in the literature as simply the ``strong exchange property''.}:
\begin{equation} \label{axiom:weak-wenzel}
    \text{For all } A,B \in \matroid, a \in A \symDiff B, \text{ there exist }b  \in A \symDiff B \, \text{ such that } A \symDiff \{a,b\} \, , \, B \symDiff \{a,b\} \in M \, .
\end{equation}
This is a weakening of Wenzel's exchange property where we do not require $a,b$ to be distinct.
We will show that for $\Delta$-matroids, the hyperplane exchange property and the weak Wenzel exchange property are in fact equivalent.
To do so, we will need a method of `lifting' properties of $\Delta$-matroids to even $\Delta$-matroids which we discuss now.

%We can characterise the $\Delta$-matroids that do satisfy the strong exchange property as follows.
Given a set $I \subseteq [n]$, we define the set $\overline{I} \subseteq [n] \cup \{*\}$ by
\[
\overline{I} = \begin{cases} I & |I| \text{ even,} \\ I \cup \{*\} & |I| \text{ odd.} \end{cases}
\]
Given a set system $M$ on $[n]$, we define $\overline{M} = \set{\overline{I}}{I \in M}$ to be a set system on $[n] \cup \{*\}$.
Clearly $\overline{M}$ is an even set system.

Our main result of this section is the following equivalent three characterisations of strong $\Delta$-matroids.

\begin{theorem} \label{thm:hyperplane-equals-wenzel}
Let $M$ be a $\Delta$-matroid.
The following are equivalent:
\begin{enumerate}
\item $M$ satisfies the hyperplane exchange property. \label{item:hew 1}
\item $M$ satisfies the weak Wenzel exchange property.\label{item:hew 2}
\item $\overline{M}$ is an even $\Delta$-matroid. \label{item:hew 3}
\end{enumerate}
\end{theorem}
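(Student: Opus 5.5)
I will prove the cycle \eqref{item:hew 3}$\Rightarrow$\eqref{item:hew 2}$\Rightarrow$\eqref{item:hew 1}$\Rightarrow$\eqref{item:hew 3}. The main tool is the bijection $I \mapsto \overline{I}$ between $M$ and $\overline{M}$, together with \Cref{t:evenIsStrong}. For $A,B\subseteq[n]$ we have $\overline{A}\symDiff\overline{B} = (A\symDiff B)\cup\{*\}$ if $|A|,|B|$ have different parity, and $\overline{A}\symDiff\overline{B}=A\symDiff B$ otherwise. The key translation is this: if $a,b\in\overline A\symDiff\overline B$ are distinct, then $\overline A\symDiff\{a,b\}=\overline{C}$ for some $C\subseteq[n]$. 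Here $C=A\symDiff\{a,b\}$ if $*\notin\{a,b\}$, and $C=A\symDiff\{a\}$ if $b=*$. So a pair exchange in $\overline M$ involving $*$ corresponds exactly to a single-element exchange ($a=b$) in $M$.

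For \eqref{item:hew 3}$\Rightarrow$\eqref{item:hew 2}, take $A,B\in M$ and $a\in A\symDiff B\subseteq\overline A\symDiff\overline B$. By \Cref{t:evenIsStrong}, $\overline M$ satisfies Wenzel's exchange property, so there is $b'\neq a$ in $\overline A\symDiff\overline B$ with $\overline A\symDiff\{a,b'\},\overline B\symDiff\{a,b'\}\in\overline M$. If $b'=*$, set $b=a$; otherwise set $b=b'$. By the translation above, $A\symDiff\{a,b\}$ and $B\symDiff\{a,b\}$ both lie in $M$.

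\eqref{item:hew 2}$\Rightarrow$\eqref{item:hew 1} is immediate. For distinct $A,B$ the set $A\symDiff B$ is nonempty, so we can pick any $a$ in it and apply weak Wenzel.

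For \eqref{item:hew 1}$\Rightarrow$\eqref{item:hew 3}, I use \Cref{t:evenIsStrong} again and verify the hyperplane exchange property for even $\Delta$-matroids on $\overline M$. Take distinct $\overline A,\overline B\in\overline M$; then $A\neq B$. Hyperplane exchange in $M$ gives $a,b\in A\symDiff B$ with $A\symDiff\{a,b\},B\symDiff\{a,b\}\in M$.
\begin{itemize}
\item If $a\neq b$, then $\overline A\symDiff\{a,b\}=\overline{A\symDiff\{a,b\}}$, since parity is unchanged, and this gives the required exchange in $\overline M$.
\item If $a=b$, then $A\symDiff\{a\}\in M$ and $B\symDiff\{a\}\in M$. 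This case needs $*\in\overline A\symDiff\overline B$, i.e.\ $|A|,|B|$ of different parity, so that the pair $\{a,*\}$ is admissible.
\end{itemize}
The main obstacle is the subcase $a=b$ with $|A|\equiv|B|$, where $*$ is not available. Here I will use that $M$ is a $\Delta$-matroid. Let $A'=A\symDiff\{a\}$. Then $B\symDiff A'=(A\symDiff B)\setminus\{a\}$ is nonempty: it has the same parity as $|A\symDiff B|$, which is even, and $|A\symDiff B|\geq 1$ since $A\neq B$. I would then argue by induction on $|A\symDiff B|$, showing that some exchange with distinct elements exists for the pair $A,B$. If that fails, I fall back on applying hyperplane exchange to the pair $(A\symDiff\{a\},B)$, which has odd difference, and combining the resulting single or double exchanges into a two-element exchange for $(A,B)$. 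I expect this case analysis to be the delicate part; the other implications are bookkeeping.
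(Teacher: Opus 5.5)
Your implications \eqref{item:hew 3}$\Rightarrow$\eqref{item:hew 2} and \eqref{item:hew 2}$\Rightarrow$\eqref{item:hew 1} are correct; the first is the easy half of Murota's equivalence, which the paper simply cites. Your reduction of \eqref{item:hew 1}$\Rightarrow$\eqref{item:hew 3} to the even hyperplane exchange property for $\overline{M}$ via \Cref{t:evenIsStrong} is also the paper's strategy, and so is your handling of the cases $a\neq b$ and ($a=b$, odd distance).

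The gap is the remaining case: $|A\symDiff B|=2m\geq 4$ with only a single-element exchange $A\symDiff\{a\},\,B\symDiff\{a\}\in M$ available. This case is the entire difficulty of the theorem, and you leave it as a plan that does not work as sketched.
\begin{itemize}
\item Hyperplane exchange on $(A\symDiff\{a\},B)$ gives $c,d$ with $A\symDiff\{a,c,d\},\,B\symDiff\{c,d\}\in M$, and $c\neq d$ is allowed. In that case nothing of the form $A\symDiff\{a,x\}$ appears.
\item Even if $c=d$, you get $A\symDiff\{a,c\}$ but only $B\symDiff\{c\}$, not $B\symDiff\{a,c\}$.
\item Applying it to $(A,B\symDiff\{a\})$ instead yields some $c'$ with $A\symDiff\{c'\},\,B\symDiff\{a,c'\}\in M$, and nothing forces $c'=c$.
\end{itemize}
There is also a small slip: $|A'\symDiff B|=|A\symDiff B|-1$ is odd, not of the same parity as $|A\symDiff B|$.

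The paper resolves this case with three ingredients your proposal lacks.
\begin{itemize}
\item \Cref{lem:odd+antipode+exchange}: at odd distance a strong $\Delta$-matroid always admits a \emph{single}-element exchange $I\symDiff\{i\},\,J\symDiff\{i\}\in M$. This does not follow from one application of hyperplane exchange; it needs its own induction with case analysis. Applied to $(A\symDiff\{a\},B)$, it produces $j$ with $A\symDiff\{a,j\},\,B\symDiff\{j\}\in M$.
\item An inductive hypothesis giving the weak Wenzel property, with a \emph{prescribed} element, for all pairs at smaller distance. A hypothesis merely asserting that some distinct-pair exchange exists is too weak. The paper gets this by induction on $n$, restricting to $M_{I,J}$ via \Cref{lem:restriction} and using \eqref{item:hew 3}$\Rightarrow$\eqref{item:hew 2} there.
\item The combination argument of \Cref{lem:even+antipode+exchange}:
\begin{itemize}
\item Weak Wenzel for $(A,B\symDiff\{j\})$ at $a$ gives $k$. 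Either $k=a$, and you are done, or $A\symDiff\{a,k\},\,B\symDiff\{a,j,k\}\in M$.
\item Hyperplane exchange in the $3$-cube between $B$ and $B\symDiff\{a,j,k\}$ either finishes or gives $B\symDiff\{j,k\}\in M$.
\item Apply the same statement at distance $2m-2$ to $A$ and $B\symDiff\{j,k\}$ at $a$; its hypotheses hold since $A\symDiff\{a\},\,B\symDiff\{a,j,k\}\in M$. This gives $\ell$ with $A\symDiff\{a,\ell\},\,B\symDiff\{a,j,k,\ell\}\in M$.
\item A final $3$-cube exchange between $B\symDiff\{a\}$ and $B\symDiff\{a,j,k,\ell\}$ puts $B\symDiff\{a,x\}$ in $M$ for some $x\in\{j,k,\ell\}$. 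This matches $A\symDiff\{a,x\}$, which is already in $M$.
\end{itemize}
\end{itemize}
Without this, or a substitute argument, \eqref{item:hew 1}$\Rightarrow$\eqref{item:hew 3} remains unproved, and hence so does the theorem.
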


The implication \eqref{item:hew 2} $\Rightarrow$ \eqref{item:hew 1} is clear.
The equivalence between \eqref{item:hew 2} and \eqref{item:hew 3} was proved in \cite[Theorem 3.1]{Murota2021}, which Murota himself credits to Geelen in 1996;
see also the discussion surrounding Theorem~2.25 in the recent \cite{DingKim}.
We will show below that \eqref{item:hew 1} $\Rightarrow$ \eqref{item:hew 3}, after establishing some lemmas.

\begin{lemma} \label{lem:restriction}
    Let $M$ be a $\Delta$-matroid on $[n]$.
    For any $I, J \subset [n]$, the set system
    \[
    M_{I,J} = \set{K \in M}{I \cap J \subseteq K \subseteq I \cup J} \, 
    \]
    is also a $\Delta$-matroid.
    Moreover, if $M$ satisfies the hyperplane exchange (resp.\ weak Wenzel) property, then $M_{I,J}$ does also.
\end{lemma}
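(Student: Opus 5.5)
The plan is a direct verification: every exchange demanded by the three axioms only toggles elements of $A \symDiff B$, and toggling such elements cannot leave the interval $[I\cap J, I\cup J]$. So each exchange that $M$ provides for a pair of members of $M_{I,J}$ automatically lands back in $M_{I,J}$.

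\emph{Key observation.} Let $A, B \in M_{I,J}$. Both contain $I \cap J$ and both are contained in $I \cup J$. Hence every $x \in A \symDiff B$ satisfies $x \in I \cup J$ and $x \notin I \cap J$. Consequently, for any subset $S \subseteq A \symDiff B$, the set $A \symDiff S$ still satisfies $I \cap J \subseteq A \symDiff S \subseteq I \cup J$. Indeed, elements of $I\cap J$ are never toggled, and only elements of $I \cup J$ can be added. The same holds for $B \symDiff S$.

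\emph{Symmetric exchange.} Take $A, B \in M_{I,J}$ and $a \in A \symDiff B$. Since $A,B \in M$, the symmetric exchange property~\eqref{axiom:symm-ex} for $M$ gives some $b \in A \symDiff B$ with $A \symDiff \{a,b\} \in M$. The set $\{a,b\}$ lies in $A \symDiff B$, so by the observation $A \symDiff\{a,b\}$ lies in the interval. Therefore $A \symDiff\{a,b\} \in M_{I,J}$, and $M_{I,J}$ is a $\Delta$-matroid. If the convention requires $\Delta$-matroids to be nonempty, this should be read under the assumption that $M_{I,J} \neq \emptyset$; this is the case in all later applications, where $I,J$ will be members of $M$.

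\emph{Hyperplane exchange and weak Wenzel.} The argument is identical. Given distinct $A,B \in M_{I,J}$, the property~\eqref{axiom:strong-symm-ex} for $M$ supplies $a,b \in A \symDiff B$ with $A \symDiff\{a,b\}, B\symDiff\{a,b\} \in M$. For~\eqref{axiom:weak-wenzel}, given $A,B$ and $a \in A\symDiff B$, the property for $M$ supplies $b \in A\symDiff B$ with the same conclusion. In both cases $\{a,b\} \subseteq A \symDiff B$, so the observation places both new sets in the interval $[I\cap J, I\cup J]$, and hence in $M_{I,J}$.

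There is no real obstacle here. The only point needing care is noting that $\{a,b\}\subseteq A\symDiff B$ in every axiom, including when $a = b$; this is exactly what makes the restriction to an interval compatible with all three exchange properties.
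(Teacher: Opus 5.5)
Your proposal is correct and follows essentially the same route as the paper. Both arguments observe that $A \symDiff B \subseteq (I\cup J)\setminus(I\cap J)$, so any set obtained by toggling elements of $A \symDiff B$ stays in the interval. The paper then leaves the check of the three exchange axioms as routine, and you carry that check out explicitly.
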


\begin{proof}
    Let $A, B \in M_{I,J}$.
    As $I \cap J \subseteq A,B \subseteq I \cup J$, it follows that $A \symDiff B \subseteq (I \cup J) \setminus (I \cap J)$.
    Hence any $\sd{A}{a,b} \in M$ is also contained in $M_{I,J}$.
    It is now routine to check that if $M$ satisfies symmetric exchange, hyperplane exchange, or weak Wenzel exchange, then so does $M_{I,J}$.
\end{proof}

\begin{remark}
    A geometric viewpoint on $M_{I,J}$ is as the restriction of the polytope of $M$ to a face of the cube $P([n]) = [0,1]^n$.
    In particular, $P(M_{I,J}) = P(M) \cap \cube{I}{J}$ where
    \[
    \cube{I}{J} = \conv\set{\cubeVertex{S} \in \RR^n }{ I \cap J \subseteq S \subseteq I \cup J } \subseteq \RR^n \, 
    \]
    is the smallest face of $[0,1]^n$ that contains $\cubeVertex{I}$ and $\cubeVertex{J}$.
    This geometric viewpoint will be explored further in \Cref{s:antipodes} when we begin investigating antipodes on these polytopes.
\end{remark}

We next observe that for any two sets with opposite parity in a strong $\Delta$-matroid, we can perform a hyperplane exchange where $i=j$.
%Note this is Lemma 3.4 from \cite{CDFS1}.

\begin{lemma} \label{lem:odd+antipode+exchange}
    Let $M$ be a strong $\Delta$-matroid on $[n]$.
    If $I,J \in M$ are such that $|I \symDiff J| \equiv 1 \bmod 2$, then there exists $i \in I \symDiff J$ such that $I \symDiff \{i\}, J \symDiff \{i\} \in M$.
\end{lemma}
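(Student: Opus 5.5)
Apply the hyperplane exchange property \eqref{axiom:strong-symm-ex} to the pair $I,J$, which are distinct since $|I \symDiff J|$ is odd. This gives $a,b \in I \symDiff J$ with $I \symDiff \{a,b\}$ and $J \symDiff \{a,b\}$ both in $M$. If $a = b$, we are done with $i = a$. So the only case to handle is $a \neq b$.

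For the case $a \neq b$, I would induct on $|I \symDiff J|$. If $|I \symDiff J| = 1$, then necessarily $a = b$, so we are done. Suppose $|I \symDiff J| \geq 3$ and $a \neq b$, and set $I' = I \symDiff \{a,b\}$. Then $I' \in M$, and $I' \symDiff J = (I \symDiff J) \setminus \{a,b\}$ has odd size $|I \symDiff J| - 2$. The same holds for $J' = J \symDiff \{a,b\}$ with respect to $I$. The plan is to restrict to a smaller face where induction applies, and then transport the conclusion back.

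Concretely, consider the pair $(I', J)$. By induction, there is $i \in I' \symDiff J \subseteq (I \symDiff J) \setminus \{a,b\}$ with $I' \symDiff \{i\}$ and $J \symDiff \{i\}$ in $M$. This already gives one of the two required sets, namely $J \symDiff \{i\}$, for the original pair. It remains to show $I \symDiff \{i\} \in M$. Note that $I \symDiff \{i\}$ and $I' \symDiff \{i\}$ differ exactly in $\{a,b\}$, and $J' = J \symDiff\{a,b\}$ is also in $M$. I would apply the induction hypothesis again, this time to $(I, J')$, hoping to choose the same $i$. This does not work directly, since induction does not let us prescribe $i$.

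A more robust approach is to choose $(a,b)$ so that $|I \symDiff J|$ is minimal among counterexamples. Consider a counterexample $I,J$ with $|I \symDiff J|$ minimal. By \Cref{lem:restriction} we may restrict to $M_{I,J}$, so that $I = \emptyset$ and $J = [n]$ with $n$ odd, after relabelling via twisting by $I$ (the hyperplane property is preserved by twisting). The assumption is then that no singleton $\{i\}$ has both $\{i\}$ and $[n] \setminus \{i\}$ in $M$. Minimality applied to the faces $M_{\{a,b\}, [n]}$ and $M_{\emptyset, [n]\setminus\{a,b\}}$, each of odd codimension-free dimension $n-2$, gives elements $i$ with $\{a,b,i\}$ and $[n] \setminus \{i\}$ in $M$, and elements $j$ with $\{j\}$ and $[n] \setminus \{a,b,j\}$ in $M$.

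The main obstacle is combining these exchanges into a single $i$ with $\{i\}$ and $[n]\setminus\{i\}$ in $M$. My first attempt is to apply ordinary symmetric exchange \eqref{axiom:symm-ex} to $\emptyset$ and $\{a,b,i\}$ with element $i$. This yields $\{i\}$ (done, since $[n]\setminus\{i\} \in M$), or $\{i,a\}$/$\{i,b\}$, which requires further case analysis. If this case analysis stalls, I would fall back on a parity-counting argument on the $3$-dimensional faces, using \Cref{ex:delta+not+strong} as the model of the only obstruction.
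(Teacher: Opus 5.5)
\textbf{There is a genuine gap: the proposal stops exactly where the real work of the proof begins.}

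Your setup is the same as the paper's. You induct (via a minimal counterexample), normalise to $I=\emptyset$, $J=[n]$ with $n$ odd, and use one hyperplane exchange plus two applications of the inductive hypothesis. This gives $a\neq b$ and $c,d\notin\{a,b\}$ with
\[
\{a,b,c\},\ [n]\setminus\{c\},\ \{d\},\ [n]\setminus\{a,b,d\}\in M,
\]
which is precisely the paper's intermediate configuration. But combining these into a single $i$ with $\{i\},[n]\setminus\{i\}\in M$ is the entire content of the lemma, and you leave it as ``further case analysis'' or an unspecified ``parity-counting argument''.

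Moreover, the tool you propose, ordinary symmetric exchange \eqref{axiom:symm-ex} between $\emptyset$ and $\{a,b,c\}$, is too weak. It yields only one set, $\{c\}$, $\{a,c\}$ or $\{b,c\}$, with no complementary partner, so the outcome $\{a,c\}$ gives you no singleton at all. Some stronger input is unavoidable here. The lemma fails for non-strong $\Delta$-matroids: take $\emptyset$ and $123$ in \Cref{ex:delta+not+strong}. So the combining step must use hyperplane exchange, or equivalently the inductive hypothesis on $3$-cubes.

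\textbf{The missing argument.}
\begin{enumerate}
\item Dispose of $c=d$: then $\{c\}$ and $[n]\setminus\{c\}$ both lie in $M$.
\item Apply hyperplane exchange to $\emptyset$ and $\{a,b,c\}$. One of the pairs $\{c\},\{a,b\}$; $\{a\},\{b,c\}$; $\{b\},\{a,c\}$ lies in $M$.
\item Likewise for $[n]$ and $[n]\setminus\{a,b,d\}$: one of the pairs $[n]\setminus\{d\},[n]\setminus\{a,b\}$; $[n]\setminus\{a\},[n]\setminus\{b,d\}$; $[n]\setminus\{b\},[n]\setminus\{a,d\}$ lies in $M$.
\item Every combination of these finishes immediately except the crossed one. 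By the symmetry $a\leftrightarrow b$ this is: $\{a\}$, $[n]\setminus\{b\}$ and $[n]\setminus\{a,d\}$ lie in $M$.
\item In the crossed case you need a \emph{third} application of the inductive hypothesis, to the pair $\emptyset$, $[n]\setminus\{a,d\}$. This gives $e\notin\{a,d\}$ with $\{e\},[n]\setminus\{a,d,e\}\in M$.
\item If $e\in\{b,c\}$ you are done, since $[n]\setminus\{b\}$ and $[n]\setminus\{c\}$ are in $M$.
\item Otherwise, hyperplane exchange between $[n]$ and $[n]\setminus\{a,d,e\}$ puts one of $[n]\setminus\{a\}$, $[n]\setminus\{d\}$, $[n]\setminus\{e\}$ in $M$. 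Each is matched by the singleton $\{a\}$, $\{d\}$ or $\{e\}$ already known to be in $M$.
\end{enumerate}
This is the paper's argument. Without it, and in particular without the extra inductive step producing $e$, your proposal does not prove the lemma.
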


\begin{proof}
    We prove this by induction on $\order{I \symDiff J} = 2k + 1$.
    For the base cases of $k = 0, 1$, any hyperplane exchange suffices.

    For the inductive step where $k \geq 2$, by the hyperplane exchange condition we know there exists $a, b \in I \symDiff J$ such that $\sd{I}{a,b}$ and $\sd{J}{a,b}$ are in $\matroid$.
    If $a = b$, then we are done; therefore we may assume $a \neq b$.

    By induction, considering $\sd{I}{a,b}$ and $J$, there exists $c \neq a, b$ such that $\sd{I}{a,b,c}$ and $\sd{J}{c}$ are in $\matroid$.
    Similarly, considering $I$ and $\sd{J}{a,b}$, there exists $d \neq a, b$ such that $\sd{I}{d}$ and $\sd{J}{a, b, d}$ are in $\matroid$.
    Note that if $c = d$ then both $\sd{I}{d}$ and $\sd{J}{c}$ being in $\matroid$ implies we are done.
    Thus we may further assume $c \neq d$.

    By hyperplane exchange we know $I$ and $\sd{I}{a, b, c}$ being in $\matroid$ implies either: (A.1) $\sd{I}{c}$ and $\sd{I}{a,b}$ are in $\matroid$, (A.2) $\sd{I}{a}$ and $\sd{I}{b,c}$ are in $\matroid$, or (A.3) $\sd{I}{b}$ and $\sd{I}{a,c}$ are in $\matroid$.
    Similarly, $J,\, \sd{J}{a, b, d} \in \matroid$ implies either: (B.1) $\sd{J}{d}$ and $\sd{J}{a,b}$ are in $\matroid$, (B.2) $\sd{J}{a}$ and $\sd{J}{b,d}$ are in $\matroid$, or (B.3) $\sd{J}{b}$ and $\sd{J}{a,d}$ are in $\matroid$.
    
    In case (A.1) we are done, since this case provides $\sd{I}{c} \in \matroid$ and our setup gives $\sd{J}{c} \in \matroid$.
    Likewise we are done in case (B.1).
    Note further that if we have (A.2) and (B.2) then we are also done since $\sd{I}{a}$ and $\sd{J}{a}$ would be in $\matroid$; and similarly with (A.3) and (B.3).
    Therefore, without loss of generality, we may assume we have (A.2) and (B.3).

    Up to now we have the following elements in $\matroid$:
    \begin{gather*}
        I,\, \sd{I}{a},\, \sd{I}{d},\, \sd{I}{a,b},\, \sd{I}{b,c},\, \sd{I}{a,b,c},\\
        J,\, \sd{J}{b},\, \sd{J}{c},\, \sd{J}{a,b},\, \sd{J}{a,d},\, \sd{J}{a,b,d}\\
        \text{such that } a, b, c \text{ and } d \text{ are pairwise distinct.}
    \end{gather*}
    By induction, $I$ and $\sd{J}{a,d}$ must have an element $e$ (equal to neither $a$ nor~$d$) in their symmetric difference such that $\sd{I}{e}$ and $\sd{J}{a, d, e}$ are in $\matroid$.
    If $e = b$ or $c$, then since $\sd{J}{b}$ and $\sd{J}{c}$ are in $\matroid$, we are done; therefore we may assume $e \neq b, c$.
    %Similarly, by induction, $J$ and $\sd{I}{b,c}$ must contain an $f$ (not equal to $b$ nor $c$) in their symmetric difference such that $\sd{I}{b, c, f}$ and $\sd{J}{f}$ are in $\matroid$, and by a similar argument as before, we may assume $f \neq a, d$.

    %Finally, taking $I$ and $\sd{I}{b,c,f}$, we know either $\sd{I}{b}$, $\sd{I}{c}$ or $\sd{I}{f}$ are in $\matroid$. But since $\sd{J}{b}$, $\sd{J}{c}$ and $\sd{J}{f}$ are already in $\matroid$, we are done.
    Finally, taking $J$ and $\sd{J}{a,d,e}$, we know either $\sd{J}{a}$, $\sd{J}{d}$ or $\sd{J}{e}$ are in $\matroid$. But since $\sd{I}{a}$, $\sd{I}{d}$ and $\sd{I}{e}$ are already in $\matroid$, we are done with the proof.
    %    Similarly, we have the same for $\sd{J}{a, d, e}$.
    %Thus we are done with the proof.
\end{proof}

\begin{proof}[Proof of \Cref{thm:hyperplane-equals-wenzel}]
As stated earlier, the only implication that remains to be proved is \eqref{item:hew 1} $\Rightarrow$ \eqref{item:hew 3}.
Let $M$ be a $\Delta$-matroid on $[n]$ satisfying the hyperplane exchange property; we show that $\overline{M}$ is an even $\Delta$-matroid on $[n] \cup \{*\}$.
We proceed by induction on $n$.
In the base cases $n=0,1,2$, every $\Delta$-matroid satisfies the weak Wenzel exchange property, and so the equivalence \eqref{item:hew 2} $\Leftrightarrow$ \eqref{item:hew 3} gives the result.

For the inductive step, we assume that for any $n' < n$, a $\Delta$-matroid $N$ on $[n']$ satisfying the hyperplane exchange property can be lifted to an even $\Delta$-matroid $\overline{N}$, and hence also satisfies the weak Wenzel exchange property.
By \Cref{lem:restriction}, the set system $M_{I,J}$ is also a $\Delta$-matroid satisfying the hyperplane exchange property for any $I,J \subseteq [n]$.
% Recall that if $M$ is a $\Delta$-matroid, then $M' = M \cap \cube{I}{J}$, the restriction of $M$ to the hypercube $\cube{I}{J}$, is also a $\Delta$-matroid.
% Moreover, if $M$ satisfies Hyperplane Exchange then $M'$ must also satisfy Hyperplane Exchange. \aram{Might be good to mention/link to props?}
Up to relabelling, $M_{I,J}$ is a set system on $[n']$ where $n' = |I \symDiff J|$, hence by the induction hypothesis $M_{I,J}$ satisfies the weak Wenzel exchange property whenever $|I \symDiff J| < n$.
As such, we can assume that for any $I, J \in M$ with $|I \symDiff J| < n$, we can apply a weak Wenzel exchange.

Let $\overline{I}, \overline{J} \in \overline{M}$. We will show there exist distinct $\{i,j\} \in \overline{I} \symDiff \overline{J}$ such that $\overline{I} \symDiff \{i,j\}$ and $\overline{J} \symDiff \{i,j\} \in \overline{M}$.
If $|I \symDiff J| < n$, then by induction $M_{I,J}$ can be lifted to the even $\Delta$-matroid $\overline{M_{I,J}}$, and so there exists $\overline{I} \symDiff \{i,j\}$ and $\overline{J} \symDiff \{i,j\} \in \overline{M_{I,J}} \subset \overline{M}$.
As such, it suffices to only prove the case where $|I \symDiff J| = n$.

A useful fact we will use throughout is that for any subsets $A,B \in M$, we have $\overline{A \symDiff B} = \overline{A} \symDiff \overline{B}$.
    To see this, note that $* \in \overline{A \symDiff B}$ if and only if $|A \symDiff B|$ is odd, or equivalently $|A|$ and $|B|$ have different parity.
    This is equivalent to $* \in \overline{A} \symDiff \overline{B}$.
    
First consider the case where $n$ is odd.
By hyperplane exchange, there exists (possibly non-distinct) $i,j \in I \symDiff J$ such that $I \symDiff \{i,j\}$ and $J \symDiff \{i,j\} \in M$.
Then
\begin{equation} \label{eq:lift+exchange}
\overline{\sd{I}{i,j}} = \sd{\overline{I}}{\overline{i,j}} = \begin{cases}
\sd{\overline{I}}{i,j} \in \overline{M} & i\neq j \\ \sd{\overline{I}}{i,*} \in \overline{M} & i=j
\end{cases}
\end{equation}
and similarly for $\sd{\overline{J}}{\overline{i,j}}$.
As $|I\symDiff J| = n$ is odd, it follows that $\overline{I} \symDiff \overline{J} = (I \symDiff J) \cup \{*\}$ and so either case above is a valid exchange between $\overline{I}$ and $\overline{J}$.
Therefore $\overline{M}$ satisfies the hyperplane exchange property (for even $\Delta$-matroids).
By \Cref{t:evenIsStrong}, $\overline{M}$ is an even $\Delta$-matroid.

We next consider the case where $n$ is even, the far more intricate case.
By hyperplane exchange, there exists (possibly non-distinct) $i,j \in I \symDiff J$ such that $I \symDiff \{i,j\}$ and $J \symDiff \{i,j\} \in M$.
%If $i,j$ are distinct, then the first case of \eqref{eq:lift+exchange} suffices.
If $i=j$, repeating the second case of \eqref{eq:lift+exchange} does not work as $* \notin \overline{I} \symDiff \overline{J}$.
We show that we can always find another distinct element $j \in I \symDiff J \setminus \{i\}$ such that $I \symDiff \{i,j\},\, J \symDiff \{i,j\} \in M$.

\begin{claim} \label{lem:even+antipode+exchange}
    %Let $M \subseteq [n]$ be a strong $\Delta$-matroid.
    If $I,J \in M$ are such that $|I \symDiff J| = n = 2m$ for some $m \in \ZZ$ 
    %then there exists distinct $i,j \in I \symDiff J$ such that $I \symDiff \{i,j\}, J \symDiff \{i,j\} \in M$.
    and $i \in I \symDiff J$ such that $I \symDiff \{i\},\, J \symDiff \{i\} \in M$, then there exists some $j \in I \symDiff J \setminus \{i\}$ such that $I \symDiff \{i,j\},\, J \symDiff \{i,j\} \in M$.
\end{claim}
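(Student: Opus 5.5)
The plan is to recast the goal symmetrically and then run a case analysis in the style of the proof of \Cref{lem:odd+antipode+exchange}. Put $I' = \sd{I}{i}$ and $J' = \sd{J}{i}$; both lie in $M$ by hypothesis, and $I' \symDiff J' = I \symDiff J = [n]$. The desired conclusion $\sd{I}{i,j},\, \sd{J}{i,j} \in M$ is exactly $\sd{I'}{j},\, \sd{J'}{j} \in M$ for some $j \neq i$. So the claim says: among the four sets $I, J, I', J'$ (two antipodal pairs of the full $n$-cube, differing in coordinate $i$), some single-element flip $j \neq i$ can be applied to \emph{both} $I'$ and $J'$. I will argue by contradiction, assuming no such $j$ exists.

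The tools are the following, to be applied in this order.
\begin{enumerate}
\item The pairs $(I', J)$ and $(I, J')$ each have symmetric difference $[n] \setminus \{i\}$, of odd size $n-1 < n$. \Cref{lem:odd+antipode+exchange} then gives $k \neq i$ with $\sd{I'}{k},\, \sd{J}{k} \in M$, and $l \neq i$ with $\sd{I}{l},\, \sd{J'}{l} \in M$.
\item By the induction hypothesis, the weak Wenzel property holds for any pair at distance $< n$. In particular it holds for $(I', J)$ and $(I, J')$ with a \emph{prescribed} element $a \in [n] \setminus \{i\}$, and for sub-cube pairs such as $(\sd{I'}{k}, J')$.
\item Hyperplane exchange for $(I', J')$ gives $a, b$ with $\sd{I'}{a,b},\, \sd{J'}{a,b} \in M$. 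If $a = b \neq i$ we are done. If $\{a,b\} \ni i$ or $a = b = i$, we get exchanges of $I$ and $J$ instead. Otherwise $a, b, i$ are distinct and we gain $\sd{I}{i,a,b},\, \sd{J}{i,a,b} \in M$.
\end{enumerate}

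The key step is to combine these as in the lemma. Starting from $\sd{I'}{k} \in M$ and $\sd{J}{k} \in M$, apply weak Wenzel to the pair $(\sd{J}{k}, J')$ and to the sub-cube pair $(I', \sd{J'}{k})$ with prescribed element $k$. Each application either immediately yields $\sd{J'}{k} \in M$, which finishes with $j = k$, or produces new members of $M$ at distance one or two from $I'$ or $J'$.

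Then, exactly as in the last step of \Cref{lem:odd+antipode+exchange}, I will use hyperplane exchange on pairs at distance three. An example is $I'$ together with a set $\sd{I'}{x,y,z}$ obtained above; such a pair forces some $\sd{I'}{x}$, $\sd{I'}{y}$ or $\sd{I'}{z}$ into $M$. I will arrange the elements $x, y, z$ to be among those for which $\sd{J'}{\cdot} \in M$ is already known. This contradicts the assumption, giving the required $j$.

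The main obstacle is the bookkeeping. I need to choose the prescribed elements in the weak Wenzel applications so that the distance-three configuration at the end has all three of its candidate single flips already matched on the other side. I also need to check that the elements produced are distinct from $i$. Distinctness is guaranteed because every auxiliary pair has symmetric difference avoiding $i$, or containing it only in a controlled way.

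If this direct chase becomes unwieldy, the fallback is a second induction. I would show that the pairs for which the claim fails form a configuration that can be restricted via \Cref{lem:restriction} to a face of dimension $< n$ that still contains an antipodal pair. On that face, the inductively known weak Wenzel property (equivalently, the lifted set system being an even $\Delta$-matroid) yields the contradiction.
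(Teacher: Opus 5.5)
There is a genuine gap. You have the right ingredients: \Cref{lem:odd+antipode+exchange} on $(I',J)$, exchanges at distance $<n$ from the outer induction, and a closing hyperplane exchange at distance three. But the argument linking them is absent, and the steps you do specify give nothing. Weak Wenzel on $(\sd{J}{k},J')$ is vacuous. These sets differ only in $\{i,k\}$, so the trivial exchange that merely swaps them always satisfies the axiom. Weak Wenzel on $(I',\sd{J'}{k})$ cannot be invoked at all, since $\sd{J'}{k}\in M$ is precisely what you do not know. So nothing in the proposal produces the three elements your endgame needs, and the ``bookkeeping'' is where the entire proof lives. Your fallback is too vague to assess as written.

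Here is what is missing. Take $k$ from your step 1 and apply weak Wenzel to $(I,\sd{J}{k})$, at distance $n-1$, with prescribed element $i$. Either $\sd{J'}{k}\in M$ and you are done, or you get $x\notin\{i,k\}$ with $\sd{I'}{x},\sd{J'}{k,x}\in M$. Hyperplane exchange between $J$ and $\sd{J'}{k,x}$ then finishes unless $\sd{J}{k,x}\in M$.

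In that remaining case, the four sets $I$, $I'$, $\sd{J}{k,x}$, $\sd{J'}{k,x}$ reproduce the hypothesis of the claim on a face of dimension $n-2$ containing $i$. The crux is a \emph{nondegenerate} exchange there: some $y\notin\{i,k,x\}$ with $\sd{I'}{y},\sd{J'}{k,x,y}\in M$. Weak Wenzel cannot guarantee $y\ne i$ at even distance. The paper obtains $y$ by inducting on $m$ (base case $m=1$ trivial) and applying the claim itself to the pair $(I,\sd{J}{k,x})$. Alternatively, you could use Wenzel's distinct exchange in the lifted even $\Delta$-matroid on that face, which the outer induction provides.

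Finally, hyperplane exchange between $J'$ and $\sd{J'}{k,x,y}$ puts some $\sd{J'}{z}$ with $z\in\{k,x,y\}$ into $M$. Meanwhile $\sd{I'}{z}\in M$ for all three choices, which is your intended endgame with the roles of $I'$ and $J'$ swapped. This inner induction (or an equivalent nondegenerate even-distance exchange) is the missing idea. Your fallback points toward it, but only the steps above identify the smaller face to which you should restrict.
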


\begin{proof}
    We prove this via induction on $m$. %$|I \symDiff J| = 2m$.
    Note that we will continue to use the induction hypothesis in the proof of \Cref{thm:hyperplane-equals-wenzel}, hence can apply weak Wenzel to any sets $A,B$ with $|A \symDiff B| < n = 2m$.
    For the base case of $m=1$, we have $I \symDiff J = \{i,j\}$, and so $J = I \symDiff \{i,j\}$ and vice versa.
    Hence the claim trivially holds.

    For the general case, %by Hyperplane Exchange there exists possibly non-distinct $i,i'$ such that $\sd{I}{i,i'}$ and $\sd{J}{i,i'} \in M$.
    %We're done if they are distinct, so assume $i = i'$.
    applying \Cref{lem:odd+antipode+exchange} to $I\symDiff \{i\}$ and $J$ implies there exists some $j \in (I \symDiff J) \setminus \{i\}$ such that $I \symDiff \{i,j\}$ and $J \symDiff \{j\}$ are in $M$.
    As $I \symDiff (J \symDiff \{j\})$ has cardinality less than $n$, %by induction hypothesis of \Cref{thm:hyperplane-equals-wenzel}, 
    we can apply weak Wenzel to $I$ and $J \symDiff \{j\}$ for $i \in I \symDiff (J \symDiff \{j\})$, obtaining an element $k$.
    Either $k = i$ giving $J \symDiff \{i,j\} \in M$, or else $k \in I \symDiff J \setminus \{i,j\}$ is such that $I \symDiff \{i,k\}$ and $J \symDiff \{i,j,k\}$ are both in $M$.
    In the former case we are done, so let us assume the latter case.
    Applying hyperplane exchange to $J$ and $J \symDiff \{i,j,k\}$, we have either $J \symDiff \{i,j\}$, $J \symDiff \{i,k\}$ or $J \symDiff\{j,k\}$ are in $M$.
    If either of the former two hold then we are done, so assume the final case.

    Consider $I\symDiff \{i\}$ and $J \symDiff\{i,j,k\}$.
    Observe that $(I\symDiff \{i\}) \symDiff (J \symDiff\{i,j,k\})$ has cardinality $2(m-1)$, contains $i$, and that
    \[
    I = (I\symDiff \{i\}) \symDiff \{i\} \in M \, , \quad J \symDiff\{j,k\} = (J \symDiff\{i,j,k\}) \symDiff \{i\} \in M \, .
    \]
    Hence we can apply the induction hypothesis of the claim, and there exists some $\ell \in (I \symDiff J) \setminus \{i,j,k\}$ such that $I \symDiff \{i,\ell\}$ and $J \symDiff \{i,j,k,\ell\}$ are both in $M$.
    Finally, applying hyperplane exchange to $J \symDiff \{i,j,k,\ell\}$ and $J\symDiff \{i\}$ implies one of $J \symDiff \{i,j\}$, $J \symDiff \{i,k\}$ or $J \symDiff \{i,\ell\}$ are in $M$.
    In all cases, we are done.
\end{proof}
Returning to the proof of \Cref{thm:hyperplane-equals-wenzel},
\Cref{lem:even+antipode+exchange} implies that we can always apply a hyperplane exchange between $I,J$ where $i,j \in I \symDiff J$ are distinct.
Thus in \eqref{eq:lift+exchange} we may choose to be in the first case, giving us $\sd{\overline{I}}{i,j}$ and $\sd{\overline{J}}{i,j} \in \overline{M}$ with $i,j \in \overline{I} \symDiff \overline{J}$, and completing the proof.
\end{proof}
% All the work

% \alexIL{Why is this remark here?}
% \benIL{Don't remember, remove}
% \begin{remark}
%     In \cite{CDFS1}, the authors gave characterisations of strong Coxeter matroids for all minuscule parabolic subgroups, which includes $\Delta$-matroids and even $\Delta$-matroids.
%     These characterisations were in terms of `strong exchange equations', tropical equations whose solutions were exactly the Coxeter matroids of a given type and parabolic subgroup.
% \end{remark}

\Cref{thm:hyperplane-equals-wenzel} relates strong $\Delta$-matroids and even $\Delta$-matroids via lifting.
But the proof also uncovers another relationship: restricting a strong $\Delta$-matroid to subsets of a certain parity yields an even $\Delta$-matroid.

\begin{corollary} \label{cor:strong+factors+into+even}
Let $M$ be a strong $\Delta$-matroid.
Then
\[
M_+ := \set{A \in M}{|A| \equiv 0 \bmod 2} \, , \quad M_- := \set{A \in M}{|A| \equiv 1 \bmod 2}
\]
are even $\Delta$-matroids.
\end{corollary}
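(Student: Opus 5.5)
Since $M$ is strong, \Cref{thm:hyperplane-equals-wenzel} tells us that $\overline{M}$ is an even $\Delta$-matroid on $[n]\cup\{*\}$. By \Cref{t:evenIsStrong}, $\overline{M}$ therefore satisfies Wenzel's exchange property \eqref{axiom:wenzel-exchange}. The plan is to recognise $M_+$ and $M_-$ as restrictions of $\overline{M}$ to the two facets of the cube cut out by the coordinate $*$, and then show that such restrictions stay even $\Delta$-matroids. Concretely,
\[
M_+ = \set{\overline{A}}{A\in M,\ * \notin \overline{A}}, \qquad M_- \cong \set{\overline{A}\setminus\{*\}}{A \in M,\ *\in \overline{A}}.
\]
So $M_+$ is the deletion-type restriction and $M_-$ the contraction-type restriction of $\overline{M}$ at $*$. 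In the notation of \Cref{lem:restriction} they are $\overline{M}_{I,J}$ with $I\cap J$ and $I\cup J$ chosen to fix $*$ out (respectively in), followed by forgetting $*$.

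First, \Cref{lem:restriction} shows these restrictions are $\Delta$-matroids. Evenness is immediate, since every set in $M_+$ has even size and every set in $M_-$ has odd size. An even set system that is a $\Delta$-matroid is by definition an even $\Delta$-matroid, so we are done.

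To avoid relying on how \Cref{lem:restriction} handles fixing a coordinate, we can also check the exchange directly. Take $A,B\in M_+$ and $a\in A\symDiff B$. Wenzel's exchange in $\overline{M}$ gives $b\in (\overline{A}\symDiff\overline{B})\setminus\{a\}$ with $\overline{A}\symDiff\{a,b\}, \overline{B}\symDiff\{a,b\}\in\overline{M}$. Here $\overline{A}=A$ and $\overline{B}=B$, so $*\notin A\symDiff B$, which forces $b\in[n]$. The two new sets therefore have even size and lie in $M_+$. The same argument works for $M_-$, since $*$ lies in both lifts and hence not in their symmetric difference. In each case this yields the symmetric exchange \eqref{axiom:symm-ex} (with $a\ne b$).

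The only point needing care, and the step I would check most carefully, is that the $*$ coordinate never enters the exchange; as shown, it cannot, because $*\notin\overline{A}\symDiff\overline{B}$ whenever $A$ and $B$ have equal parity.
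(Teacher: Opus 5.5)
Your proof is correct, but it takes a slightly different route from the paper's.

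The paper stays inside $M$. It cites \Cref{lem:even+antipode+exchange} to conclude that any two distinct sets of $M$ of equal parity admit an exchange $I\symDiff\{i,j\}$, $J\symDiff\{i,j\}$ with $i\neq j$. So $M_+$ and $M_-$ satisfy the hyperplane exchange property for even $\Delta$-matroids, and it then applies \Cref{t:evenIsStrong}.

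You instead use only the statement of \Cref{thm:hyperplane-equals-wenzel}, that $\overline{M}$ is an even $\Delta$-matroid. You recognise $M_\pm$ as the slices of $\overline{M}$ with $*$ fixed out or in. Wenzel's exchange in $\overline{M}$ then gives $b\neq a$ for free, and $b\in[n]$ because $*\notin\overline{A}\symDiff\overline{B}$ when $|A|\equiv|B| \bmod 2$.

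Both arguments ultimately rest on the same Claim; yours reaches it through the proof of the theorem. The difference is that you use the theorem as a black box. The paper's one-line citation tacitly requires three further steps:
\begin{enumerate}
\item first applying hyperplane exchange to produce the element $i$;
\item then passing to $M_{I,J}$, so that $|I\symDiff J|$ equals the ground set size as the Claim demands;
\item and noting that the weak Wenzel exchanges used inside the Claim's proof are now available.
\end{enumerate}

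Your viewpoint also has a clean polyhedral reading. $P(M_+)\times\{0\}$ and $P(M_-)\times\{1\}$ are the intersections of $P(\overline{M})$ with the facets $x_*=0$ and $x_*=1$ of the ambient cube, hence faces of $P(\overline{M})$. So all their edges are parallel to some $e_i\pm e_j$ with $i,j\in[n]$.

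The paper's route, for its part, directly exhibits the distinct-pair exchange property for same-parity pairs in $M$ itself.
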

\begin{proof}
    \Cref{lem:even+antipode+exchange} implies that if $M$ satisfies the hyperplane exchange property, then for any $I, J$ with $|I \symDiff J| \equiv 0 \bmod 2$ there exist distinct $i,j \in I \symDiff J$ such that $I \symDiff \{i,j\}$ and $J \symDiff \{i,j\} \in M$.
    In particular, the restrictions $M_+$ and $M_-$ satisfy the hyperplane exchange property for even $\Delta$-matroids, completing the proof.
\end{proof}

Note that the converse of \Cref{cor:strong+factors+into+even} is not true: the $\Delta$-matroid $M$ from \Cref{ex:delta+not+strong} is not strong, but $M_+$ and $M_-$ are even $\Delta$-matroids.

% \begin{example}\label{ex:standard}
%     We return to the $\Delta$-matroid $M = \{\emptyset, 1, 2, 3, 123\}$ of Example~\ref{ex:delta+not+strong}, which we saw does not satisfy strong exchange.
%     This lifts to $\overline{M} = \{\emptyset, 1*, 2*, 3*, 123*\}$, all of which have the same parity.
%     This is not an even $\Delta$-matroid as $* \in \emptyset \symDiff 123*$, but no other element can be removed from $123$ to give a set in $\overline{M}$.
% \end{example}

\section{Peerless antipodes and strong $\Delta$-matroids} \label{s:antipodes}

%\subsubsection{Antipodes}
%\alex{Annoying thing which we should take care with:
%The cube $[0,1]^n$ is not a $W$ orbit polytope.
%From the reprsentation-theoretic point of view it is in principle important which cube we take:
%in particular, the spinor variety is defined by Pfaffians
%because its Grassmann embedding corresponds to the convex hull of the orbit of weights $W\cdot(\frac12,\ldots,\frac12)$ rather than $W\cdot(1,\ldots,1)$.
%Perhaps the right move is to redefine the symbol ``$e_I$'':
%but I don't know how that will harm the representation theory. Kieran?} \KC{Essentially, I have never referred to where the cube is located. I have exclusively used $I$ a subset of $[n]$ and then defined a basis $e_i$ and $e_I = e_{i_1}\wedge \ldots \wedge e_{i_k}$. Essentially, I will need to check but I have never used the $[0,1]^n$ description} \KC{Actually tell a lie, I have explicitly (or perhaps implicitly) used the cube to be $[-1/2,1/2]^n$ Because I have been assuming it is the convex hull of the weights (all of which are $(\pm \frac{1}{2})^n$)}
%\alex{Good! Then we just need to figure out how to distinguish the exterior algebra element and the polytope vertex that are both called $e_I$ at the moment.
%I'll make a command \texttt{\textbackslash cubevertex} for when we mean a vertex of a cube.}

Given $I,J \subseteq [n]$, we define $\cube{I}{J}$ to be the hypercube
\[
\cube{I}{J} = \conv\set{\cubeVertex{S} \in \RR^n }{ I \cap J \subseteq S \subseteq I \cup J } \subseteq \RR^n \, .
\]
More generally, we say a polytope $Q \subseteq \RR^n$ is a $k$-cube if $Q = \cube{I}{J}$ for some $I,J \subseteq [n]$ where $|I \symDiff J| = k$.
Note that $\cube{I}{J}$ is the smallest face of the `ambient cube' $\cube{\emptyset}{[n]}$ that contains both $\cubeVertex{I}$ and $\cubeVertex{J}$.
%of the ambient cube $\ambientPoly=\conv\{\cubeVertex{K}:K\subseteq[n]\}$ for $\Delta$-matroids. 
The dimension of $\cube{I}{J}$ is $|I \symDiff J|$.
Moreover, as $I \cup J = (I \cap J) \cup (I \symDiff J)$, any sets $S, T$ with $S \cap T = I \cap J$ and $S \symDiff T = I \symDiff J$ define the same hypercube $\cube{S}{T} = \cube{I}{J}$.
For such $S$ and $T$, we call the pair an $(\cubeVertex{S}, \cubeVertex{T})$ \defn{antipode} of the cube $\cube{I}{J}$, as the two vertices in it are antipodal in the geometric sense.

\begin{definition}\label{def:peerless+isolated}
    Let $M$ be a set system of $[n]$ and let $I,J \in M$.
    \begin{itemize}
        \item We say $(\cubeVertex{I}, \cubeVertex{J}) \subseteq \polyOf{M}$ is a \defn{peerless antipode} (in $\cube{I}{J}$) if it is the only antipode of $\cube{I}{J}$ contained in $\polyOf{M}$.
        Equivalently, $(I,J)$ is a peerless antipode of $M$ if $S,\,T \in M$ with $S \cap T = I \cap J$ and $S \symDiff T = I \symDiff J$ implies $S=I$ and $T=J$, or vice versa.
        \item We say $(\cubeVertex{I}, \cubeVertex{J}) \subseteq \polyOf{M}$ is an \defn{isolated antipode} (in $\cube{I}{J}$) if the restriction of $\polyOf{M}$ to $\cube{I}{J}$ is exactly $(\cubeVertex{I}, \cubeVertex{J})$. 
        Equivalently, $(I,J)$ is an isolated antipode of $M$ if $S \in M$ and $I \cap J \subseteq S \subseteq I \cup J$ implies that $S=I$ or $S=J$.
    \end{itemize}
\end{definition}
\noindent In particular, isolated antipodes are necessarily peerless, but the converse is not always true.

If we call $(\cubeVertex{I},\, \cubeVertex{J})$ simply an antipode of $P(M)$,
or of~$M$ as we will do interchangeably,
we mean to see it as an antipode in the cube $\cube{I}{J}$ it determines.

\begin{example}
\label{ex:antipodes}
    We give examples of peerless and isolated antipodes when $\cube{I}{J}$ is a $3$-cube.
    The cubes below represent the hypercube $\cube{I}{J}$ with the subpolytope of $\polyOf{M}$ being represented by the bold lines.
    The vertices are the vertices present in $\cube{I}{J} \cap \polyOf{M}$.
    On the left, we give an example of an antipode which is peerless and not isolated and on the right we give an example of an antipode which is isolated.
    \begin{center}
        \begin{tikzpicture}
            [
                scale=1.6,
                vertex/.style={inner sep=1.4pt,circle,draw=black,fill=black,thick},
            ]
            \coordinate (bl1) at (0,0);
            \coordinate (br1) at (1,0);
            \coordinate (tl1) at (0,1);
            \coordinate (tr1) at (1,1);

            \coordinate (bl2) at (0.3,0.7);
            \coordinate (br2) at (1.3,0.7);
            \coordinate (tl2) at (0.3,1.7);
            \coordinate (tr2) at (1.3,1.7);
            
            \draw (bl1) -- (br1);
            \draw[ultra thick] (bl1) -- (tl1);
            \draw (br1) -- (tr1);
            \draw (tl1) -- (tr1);

            \draw[dashed] (bl2) -- (br2);
            \draw[dashed, ultra thick] (bl2) -- (tl2);
            \draw (br2) -- (tr2);
            \draw[ultra thick] (tl2) -- (tr2);

            \draw[dashed, ultra thick] (bl1) -- (bl2);
            \draw[ultra thick] (tl1) -- (tl2);
            \draw (br1) -- (br2);
            \draw (tr1) -- (tr2);

            \draw[ultra thick] (bl1) -- (tr2);
            \draw[ultra thick] (tl1) -- (tr2);
            \draw[ultra thick, dashed] (bl2) -- (tr2);

            \node[below left] at (bl1) {$\cubeVertex{I}$};
            \node[above right] at (tr2) {$\cubeVertex{J}$};
            \node[vertex] at (bl1) {};
            \node[vertex] at (tr2) {};
            \node[vertex] at (tl1) {};
            \node[vertex] at (tl2) {};
            \node[vertex] at (bl2) {};
            \fill[opacity=0.15] (bl1) -- (tr2) -- (tl2) -- (tl1) -- (bl1);
        \end{tikzpicture}
        \begin{tikzpicture}
            [
                scale=1.6,
                vertex/.style={inner sep=1.4pt,circle,draw=black,fill=black,thick},
            ]
            \coordinate (bl1) at (0,0);
            \coordinate (br1) at (1,0);
            \coordinate (tl1) at (0,1);
            \coordinate (tr1) at (1,1);

            \coordinate (bl2) at (0.3,0.7);
            \coordinate (br2) at (1.3,0.7);
            \coordinate (tl2) at (0.3,1.7);
            \coordinate (tr2) at (1.3,1.7);
            
            \draw (bl1) -- (br1);
            \draw (bl1) -- (tl1);
            \draw (br1) -- (tr1);
            \draw (tl1) -- (tr1);

            \draw[dashed] (bl2) -- (br2);
            \draw[dashed] (bl2) -- (tl2);
            \draw (br2) -- (tr2);
            \draw (tl2) -- (tr2);

            \draw[dashed] (bl1) -- (bl2);
            \draw (tl1) -- (tl2);
            \draw (br1) -- (br2);
            \draw (tr1) -- (tr2);

            \draw[ultra thick] (bl1) -- (tr2);

            \node[below left] at (bl1) {$\cubeVertex{I}$};
            \node[above right] at (tr2) {$\cubeVertex{J}$};
            \node[vertex] at (bl1) {};
            \node[vertex] at (tr2) {};
        \end{tikzpicture}
    \end{center}
\end{example}
%\delete{If it does contain exactly one antipode, we call such an antipode a \defn{peerless antipode}.}
%\aram{If it does contain exactly one antipode, this antipode is peerless.}

Given two vertices of $P(M)$, one can ask whether the antipode they form is peerless or isolated;
but a more obvious question is whether the two vertices span an edge.
In fact, being an edge is intermediate in strength between our two properties of antipodes.

\begin{lemma}\label{lem:edge-implies-peerless}
Let $(\cubeVertex{I},\cubeVertex{J})$ be an edge of $\polyOf{M}$.
Then $(\cubeVertex{I},\cubeVertex{J})$ is a peerless antipode of $\polyOf{M}$ (in $\cube{I}{J}$).
\end{lemma}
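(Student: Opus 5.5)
The key observation is that every antipode of $\cube{I}{J}$ has the same midpoint, namely the centre of the cube. I will prove the contrapositive: if there is a second antipode $(\cubeVertex{S},\cubeVertex{T})$ of $\cube{I}{J}$ contained in $\polyOf{M}$, with $\{S,T\}\neq\{I,J\}$, then the segment $[\cubeVertex{I},\cubeVertex{J}]$ is not an edge of $\polyOf{M}$.

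First I verify the midpoint claim. For an antipode $(S,T)$ of $\cube{I}{J}$ we have $S\cap T=I\cap J$ and $S\symDiff T=I\symDiff J$, so
\[
\cubeVertex{S}+\cubeVertex{T}=2\cubeVertex{S\cap T}+\cubeVertex{S\symDiff T}=2\cubeVertex{I\cap J}+\cubeVertex{I\symDiff J}=\cubeVertex{I}+\cubeVertex{J}.
\]
Hence the midpoint $c=\tfrac12(\cubeVertex{S}+\cubeVertex{T})$ is the same for every antipode of $\cube{I}{J}$.

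Now suppose $[\cubeVertex{I},\cubeVertex{J}]$ is an edge of $\polyOf{M}$. Then it is a face, so there is a linear functional $\phi$ and a constant $\alpha$ with $\phi\le\alpha$ on $\polyOf{M}$, and $\phi=\alpha$ exactly on that segment. The point $c$ lies on the segment, so $\phi(c)=\alpha$. Since $\cubeVertex{S},\cubeVertex{T}\in\polyOf{M}$ satisfy $\phi(\cubeVertex{S}),\phi(\cubeVertex{T})\le\alpha$ and their average is $\phi(c)=\alpha$, both values equal $\alpha$. Therefore $\cubeVertex{S}$ and $\cubeVertex{T}$ lie on the edge.

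The only $0/1$ points on the segment are its endpoints, because every other point has a coordinate strictly between $0$ and $1$ (here $I\neq J$, as $I,J$ are distinct vertices). So $\{S,T\}=\{I,J\}$, which contradicts the assumption and shows the antipode is peerless.

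There is no real obstacle; the only substantive step is the midpoint identity above.
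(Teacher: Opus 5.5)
Your proof is correct and is essentially the paper's argument: a linear functional exposing the edge, combined with the identity $\cubeVertex{S}+\cubeVertex{T}=\cubeVertex{I}+\cubeVertex{J}$ for every antipode $(\cubeVertex{S},\cubeVertex{T})$ of $\cube{I}{J}$. The only cosmetic difference is how the argument closes. The paper uses strict maximisation at all other vertices to get an immediate contradiction, whereas you first force $\cubeVertex{S},\cubeVertex{T}$ onto the edge and then observe that the segment contains no $0/1$ points other than its endpoints.
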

\begin{proof}
    Let $\ell\colon \RR^n \rightarrow \RR$ be a linear functional that is maximised uniquely on the edge $(\cubeVertex{I},\cubeVertex{J})$, i.e. $\ell(\cubeVertex{I}) = \ell(\cubeVertex{J}) > \ell(\cubeVertex{K})$ for all other vertices $\cubeVertex{K}$ of $\polyOf{M}$.
    Suppose $(\cubeVertex{S},\cubeVertex{T})$ is another antipode in $\cube{I}{J}$. Then $S \cap T = I \cap J$ and $S \cup T = I \cup J$.
    By linearity of $\ell$, we have
    \[
    \ell(\cubeVertex{S}) + \ell(\cubeVertex{T}) = \ell(\cubeVertex{S \cap T}) + \ell(\cubeVertex{S \cup T}) = \ell(\cubeVertex{I \cap J}) + \ell(\cubeVertex{I \cup J}) = \ell(\cubeVertex{I}) + \ell(\cubeVertex{J}) \, ,
    \]
    which contradicts that $\ell$ is strictly maximised on $\cubeVertex{I}$ and $\cubeVertex{J}$.
\end{proof}
\begin{lemma}\label{lem:isolated-implies-edge}
    Let $(\cubeVertex{I},\cubeVertex{J})$ be an isolated antipode of $\polyOf{M}$ (in $\cube{I}{J}$).
    Then $(\cubeVertex{I},\cubeVertex{J})$ is an edge of $\polyOf{M}$.
\end{lemma}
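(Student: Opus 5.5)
The plan is to use the fact that $\cube{I}{J}$ is a face of the ambient cube $[0,1]^n$, and that a face of $[0,1]^n$ meets $\polyOf{M}$ in a face of $\polyOf{M}$. Concretely, $\cube{I}{J}$ is cut out from $[0,1]^n$ by the supporting hyperplane of the linear functional
\[
\ell(x) = \sum_{i \in I \cap J} x_i - \sum_{i \notin I \cup J} x_i \, .
\]
On $[0,1]^n$ this functional is at most $|I \cap J|$. Equality holds exactly when $x_i = 1$ for $i \in I\cap J$ and $x_i = 0$ for $i \notin I \cup J$, that is, exactly on $\cube{I}{J}$.

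Since $\polyOf{M} \subseteq [0,1]^n$ and $\polyOf{M}$ contains $\cubeVertex{I}$, at which $\ell$ attains $|I\cap J|$, the functional $\ell$ is maximised on $\polyOf{M}$ precisely on $F := \polyOf{M} \cap \cube{I}{J}$. Hence $F$ is a face of $\polyOf{M}$. The vertices of $F$ are the vertices of $\polyOf{M}$ lying in $\cube{I}{J}$, namely the $\cubeVertex{S}$ with $S \in M$ and $I\cap J \subseteq S \subseteq I \cup J$. Every $\cubeVertex{S}$ with $S \in M$ is a vertex of $\polyOf{M}$, because $0/1$ points are vertices of the cube and so no such point is a convex combination of the others.

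The isolated hypothesis says these vertices are exactly $\cubeVertex{I}$ and $\cubeVertex{J}$. Therefore $F = \conv\{\cubeVertex{I}, \cubeVertex{J}\}$, which is a segment of positive length whenever $I \ne J$. A one-dimensional face is an edge, so $(\cubeVertex{I},\cubeVertex{J})$ is an edge of $\polyOf{M}$. The degenerate case $I = J$ is excluded, or trivial, by the convention that an antipode consists of two distinct vertices.

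There is no real obstacle here. The only point needing care is verifying that the restriction to a face of the ambient cube is a face of $\polyOf{M}$, and that is handled by the explicit functional $\ell$.
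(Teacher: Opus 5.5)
Your proof is correct and follows essentially the same route as the paper. The paper uses the same linear functional (weight $1$ on $I\cap J$, $0$ on $I\symDiff J$, $-\alpha$ outside $I\cup J$; you take $\alpha=1$), notes that its maximising face on the ambient cube is $\cube{I}{J}$, and concludes that the $\ell$-maximising face of $\polyOf{M}$ is $\conv\{\cubeVertex{I},\cubeVertex{J}\}$.
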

\begin{proof}
    Define a linear functional $\ell$ by
    \begin{equation} \label{eq:cube+functional}
    \ell(e_i) =
    \begin{cases}
        1 & i \in I \cap J \\
        0 & i \in I \symDiff J \\
        -\alpha & i \notin I \cup J
    \end{cases}
    \end{equation}
    for some $\alpha > 0$. 
    The face of $\cube{\emptyset}{[n]}$ maximising $\ell$ is $\cube{I}{J}$.
    Of its vertices, by assumption only $\cubeVertex{I}$ and $\cubeVertex{J}$ are in $\polyOf{M}$.
    As all vertices of $\polyOf{M}$ are vertices of~$\cube{\emptyset}{[n]}$,
    we conclude that $\cubeVertex{I}$ and $\cubeVertex{J}$ are the only vertices of $\polyOf{M}$ on which $\ell$ is maximised.
    The $\ell$-maximising face of $\polyOf{M}$ is therefore their convex hull, which is the required edge.
\end{proof}

Our goal is to characterise families of $\Delta$-matroids in terms of antipodes.
We begin by observing two sufficient conditions for a set system to be a $\Delta$-matroid.

\begin{proposition}\label{prop:antipode-implies-delta-matroid}
    Let $M$ be a set system with no peerless antipode in any $k$-cube for $k\geq 3$.
    Then $M$ is a $\Delta$-matroid.
\end{proposition}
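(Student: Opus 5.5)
I will use the edge characterisation, \Cref{thm:polytope+edges}(1): it suffices to show that every edge of $\polyOf{M}$ is parallel to some $e_i$ or $e_i\pm e_j$. An edge $(\cubeVertex{I},\cubeVertex{J})$ of $\polyOf{M}$ has direction $\cubeVertex{J}-\cubeVertex{I}$. This vector has entries $\pm1$ exactly on $I\symDiff J$ and zero elsewhere. So the edge is parallel to $e_i$ or $e_i\pm e_j$ precisely when $|I\symDiff J|\le 2$.

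Suppose for contradiction that $(\cubeVertex{I},\cubeVertex{J})$ is an edge of $\polyOf{M}$ with $k=|I\symDiff J|\geq 3$. By \Cref{lem:edge-implies-peerless}, $(\cubeVertex{I},\cubeVertex{J})$ is a peerless antipode of $\polyOf{M}$ in the cube $\cube{I}{J}$. This cube is a $k$-cube with $k\ge 3$, contradicting the hypothesis. Hence every edge has $|I\symDiff J|\le 2$ and the correct direction, and \Cref{thm:polytope+edges} gives that $M$ is a $\Delta$-matroid.

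The only point needing care is invoking the "if" direction of \Cref{thm:polytope+edges}(1) for an arbitrary set system; it is stated for any set system $M$, so this is fine. Degenerate cases (empty $M$ or $|M|=1$) have no edges and are trivially $\Delta$-matroids, assuming the convention that the empty set system counts. I expect no real obstacle: the whole content lies in \Cref{lem:edge-implies-peerless} and the edge characterisation, and the short argument above is the complete plan.
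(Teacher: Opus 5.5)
Your proposal is correct and is essentially the paper's proof: \Cref{lem:edge-implies-peerless} rules out edges with $|I\symDiff J|\ge 3$, so every edge of $\polyOf{M}$ is parallel to some $e_i$ or $e_i\pm e_j$, and \Cref{thm:polytope+edges}(1) then shows that $M$ is a $\Delta$-matroid.
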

\begin{proof}
    Lemma~\ref{lem:edge-implies-peerless} implies that edges $(\cubeVertex{I}, \cubeVertex{J})$ of $P(M)$ can only occur for sets with $|I \symDiff J| \leq 2$.
    Hence all edges of $P(M)$ are parallel to $e_i \pm e_j$ or $e_i$ for some $i,j$, making $M$ a $\Delta$-matroid.
\end{proof}

The implication holds even with weaker hypotheses on the cubes of dimension $\ge5$.

\begin{proposition}\label{prop:antipode-implies-delta-matroid-2}
    Let $M$ be a set system.
    Suppose
    \begin{itemize}
        \item $M$ has no peerless antipode in any $3$-cube or $4$-cube, and
        \item $M$ has no isolated antipode in any $k$-cube for $k \geq 5$.
    \end{itemize}
    Then $M$ is a $\Delta$-matroid.
\end{proposition}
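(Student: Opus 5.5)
By \Cref{thm:polytope+edges}, it suffices to show that every edge $(\cubeVertex{I},\cubeVertex{J})$ of $\polyOf{M}$ satisfies $|I \symDiff J| \le 2$. Suppose not, and choose an edge $(\cubeVertex{I},\cubeVertex{J})$ of $\polyOf{M}$ with $k := |I\symDiff J| \geq 3$ and $k$ \emph{minimal} among all such edges. By minimality, every edge $(\cubeVertex{A},\cubeVertex{B})$ of $\polyOf{M}$ with $|A \symDiff B| < k$ has $|A\symDiff B|\le 2$. By \Cref{lem:edge-implies-peerless}, $(\cubeVertex{I},\cubeVertex{J})$ is a peerless antipode in $\cube{I}{J}$. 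The first hypothesis therefore forces $k\ge 5$. The second hypothesis then says the antipode is not isolated, so there is some $S\in M$ with $I\cap J\subseteq S\subseteq I\cup J$ and $S\ne I,J$.

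Let $Q = \polyOf{M}\cap\cube{I}{J} = \polyOf{M_{I,J}}$. Since $\cube{I}{J}$ is a face of $[0,1]^n$, $Q$ is a face of $\polyOf{M}$, so its faces are faces of $\polyOf{M}$. The edge $[\cubeVertex{I},\cubeVertex{J}]$ is an edge of $Q$. Because $Q$ has at least three vertices ($I,J,S$), $\dim Q\ge 2$, so some $2$-face $F$ of $Q$ contains this edge. $F$ is a polygon, and each of its edges is an edge of $\polyOf{M}$ with both endpoints in $\cube{I}{J}$.

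Consider any other edge $[\cubeVertex{A},\cubeVertex{B}]$ of $F$. We have $|A\symDiff B|\le k$, with equality only if $(A,B)$ is an antipode of $\cube{I}{J}$. In the equality case, $\{A,B\}\neq\{I,J\}$ would give a second antipode of $\cube{I}{J}$ inside $\polyOf{M}$, contradicting peerlessness of the edge $(\cubeVertex{I},\cubeVertex{J})$. Hence $|A\symDiff B|<k$, and by minimality $|A\symDiff B|\le 2$. So every edge vector of $F$ other than $\cubeVertex{J}-\cubeVertex{I}$ has at most two nonzero coordinates.

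The key step is a support-counting argument. The edge vectors of the polygon $F$ span its $2$-dimensional direction plane. Since $F$ has at least two edges other than $[\cubeVertex{I},\cubeVertex{J}]$ and the edge vectors around a polygon span its plane, two non-parallel such edge vectors $u,v$ already span it. Then $\cubeVertex{J}-\cubeVertex{I}\in\operatorname{span}(u,v)$ has support contained in $\operatorname{supp}(u)\cup\operatorname{supp}(v)$, so its support has size at most $4$. But its support is $I\symDiff J$, of size $k\ge5$, a contradiction.

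Hence all edges of $\polyOf{M}$ have $|I\symDiff J|\le 2$, i.e.\ are parallel to some $e_i$ or $e_i\pm e_j$, and $M$ is a $\Delta$-matroid. The main step to get right is ruling out a second long edge in $F$. This is exactly where peerlessness is used: a second length-$k$ edge in $\cube{I}{J}$ would be another antipode of the same cube. Everything else is linear algebra on the $2$-face.
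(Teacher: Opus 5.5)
Your proof is correct and follows essentially the same route as the paper. Both take a shortest edge of length $k\ge 3$ (the paper phrases this as induction), use non-isolation to find a $2$-face $F$ of $\polyOf{M}\cap\cube{I}{J}$ containing it, show all other edges of $F$ have length at most $2$, and then count dimensions/supports to force $\dim F\ge 3$. The differences are cosmetic: you rule out a second length-$k$ edge in $F$ via the peerlessness given by \Cref{lem:edge-implies-peerless} rather than the paper's shared-midpoint argument, and you make the paper's ``$\lceil k/2\rceil$ affinely independent edges'' step explicit by noting that $\cubeVertex{J}-\cubeVertex{I}$ lies in the span of two edge vectors, each with support of size at most $2$.
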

\begin{proof}
    Let $M$ be a set system with no peerless antipode in any $3$- or $4$-cube or isolated antipode in any $k$-cube for $k \geq 5$.
    We show by induction on $k$ that there are no edges $(\cubeVertex{I}, \cubeVertex{J})$ with $|I \symDiff J| = k$ for $k \geq 3$.
    The base cases of $k=3, 4$ are immediately ruled out by Lemma~\ref{lem:edge-implies-peerless}.
    
    For the inductive step we proceed by contradiction.
    Suppose that there are no edges of `exchange length' $\{3, \dots, k-1\}$ but there exists some edge $(\cubeVertex{I}, \cubeVertex{J})$ with $|I \symDiff J| = k$.
    Consider the face $G = P(M) \cap \cube{I}{J}$ of $P(M)$ on which the linear functional \eqref{eq:cube+functional} is maximised.
    As $(\cubeVertex{I}, \cubeVertex{J})$ is not isolated, we must have $\dim(G) > 1$.
    Therefore, there exists some two-dimensional face $F \subseteq G$ of $\polyOf{M}$ that contains $(\cubeVertex{I}, \cubeVertex{J})$ as an edge.
    Every other edge of $F$ must be strictly shorter than $(\cubeVertex{I}, \cubeVertex{J})$: 
    any edge of the same length would also be an antipode of $\cube{I}{J}$, but this second antipode would have the same midpoint as $\conv\{\cubeVertex{I}, \cubeVertex{J}\}$, and one edge of a polytope cannot intersect another in an interior point.
    By the induction hypothesis, this implies that all other edges of $F$ are parallel to some $e_i$ or $e_i \pm e_j$.
    However, it takes at least $\ceil{ k/2 }$ affinely independent edges of the form $e_i$ and $e_i \pm e_j$ to walk from $\cubeVertex{I}$ to $\cubeVertex{J}$, implying that $\dim(F) \geq \ceil{ k/2} > 2$, giving a contradiction.
\end{proof}

The converse of these propositions are not true as \Cref{ex:delta+not+strong} demonstrates.
As we shall see, these conditions precisely characterise strong $\Delta$-matroids.

\subsection{Antipode cryptomorphisms for even $\Delta$-matroids}\label{ssec:even-antipodes}

We give two characterisations of even $\Delta$-matroids among even set systems in terms of peerless and isolated antipodes.
As just discussed, antipodes in 2-cubes are `unproblematic' and represent valid exchanges.
Banning all peerless antipodes in larger hypercubes characterises even $\Delta$-matroids (\Cref{prop:type+1+D}).
It is also enough to ban peerless antipodes locally in the smallest problematic cubes, namely 4-cubes, and isolated antipodes in larger hypercubes (\Cref{prop:type+2+D}).

\begin{proposition} \label{prop:type+1+D}
    Let $M$ be an even set system.
    Then $M$ is an even $\Delta$-matroid if and only if it has no peerless antipode in any $2k$-cube for $k\geq 2$.
\end{proposition}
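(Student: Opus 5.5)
The plan is to prove both directions through the hyperplane exchange property for even $\Delta$-matroids \eqref{axiom:even-strong-symm-ex}, using \Cref{t:evenIsStrong} to pass between that property and being an even $\Delta$-matroid. Since $M$ is even, every antipode $(\cubeVertex{I},\cubeVertex{J})$ of $\polyOf{M}$ has $|I\symDiff J|$ even. So the cubes in question are exactly the $2k$-cubes, and antipodes in $2$-cubes are harmless.

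For the forward direction, suppose $M$ is an even $\Delta$-matroid and $I,J\in M$ with $|I\symDiff J|=2k\ge 4$. By \Cref{t:evenIsStrong}, hyperplane exchange gives distinct $a,b\in I\symDiff J$ with $S:=I\symDiff\{a,b\}$ and $T:=J\symDiff\{a,b\}$ in $M$. We check that $(S,T)$ is another antipode of $\cube{I}{J}$. First, $S\symDiff T=I\symDiff J$. Second, since $\{a,b\}\subseteq I\symDiff J$, flipping $a$ and $b$ in both sets leaves the common part unchanged, so $S\cap T=I\cap J$. Finally, $(S,T)\neq(I,J)$ and $(S,T)\neq(J,I)$: the first holds because $a,b$ are distinct. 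For the second, $S=J$ would force $I\symDiff J=\{a,b\}$, which has size $2<4$. Hence $(I,J)$ is not peerless.

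For the converse, suppose $M$ is even with no peerless antipode in any $2k$-cube, $k\ge 2$. I would verify \eqref{axiom:even-strong-symm-ex} directly by induction on $|I\symDiff J|$, for distinct $I,J\in M$. If $|I\symDiff J|=2$, take $\{a,b\}=I\symDiff J$; then $I\symDiff\{a,b\}=J$ and $J\symDiff\{a,b\}=I$, both in $M$. If $|I\symDiff J|=2k\ge4$, non-peerlessness supplies another antipode $(S,T)$ of $\cube{I}{J}$ in $M$ with $S\ne I,J$. Since $S\cap T=I\cap J$ and $S\symDiff T=I\symDiff J$, we have $S=(I\cap J)\cup X$ and $T=(I\cap J)\cup Y$, where $X\sqcup Y=I\symDiff J$. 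The aim is to reach a pair $(I\symDiff\{a,b\},\,J\symDiff\{a,b\})$ in $M$ by applying the induction hypothesis to the strictly smaller pairs $(I,S)$, $(S,J)$, $(I,T)$, $(T,J)$. Note that $I\symDiff S$ and $S\symDiff J$ partition $I\symDiff J$, and that $I\symDiff T=S\symDiff J$.

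The main obstacle is this combination step: the exchanges obtained from the smaller pairs must be made to agree on a single pair $\{a,b\}$ that works for both $I$ and $J$. A cleaner route may be to avoid the combination altogether and instead prove that $M$ is a $\Delta$-matroid with only even-length edges. I would try this first. By \Cref{prop:antipode-implies-delta-matroid} (whose proof only needs to exclude edges of length $\ge3$, and here odd lengths cannot occur), $M$ is a $\Delta$-matroid. By \Cref{lem:edge-implies-peerless}, every edge of $\polyOf{M}$ has length $2$, so all edges are parallel to some $e_i\pm e_j$. Then \Cref{thm:polytope+edges}(2) shows that $M$ is an even $\Delta$-matroid. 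This reduces the converse to a short argument, so I would present that version and keep the inductive exchange argument only as a fallback.
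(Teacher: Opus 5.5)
Your proof is correct and follows the paper's argument: the direction from even $\Delta$-matroid to no peerless antipodes is the same single-exchange argument (you invoke hyperplane exchange from \Cref{t:evenIsStrong} where the paper invokes weak Wenzel, which makes no difference), and the converse route you chose is exactly the paper's appeal to \Cref{prop:antipode-implies-delta-matroid}. Your final step through \Cref{lem:edge-implies-peerless} and \Cref{thm:polytope+edges}(2) is correct but unnecessary, since a $\Delta$-matroid that is an even set system is an even $\Delta$-matroid by definition, and you can drop the inductive fallback entirely.
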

\begin{proof}
% Let $(\cubeVertex{I},\cubeVertex{J})$ be an edge of $\polyOf{M}$.
% Note that as $I,J$ have the same parity, $|I \symDiff J| = 2k$ for some $k$.
% Hence $(\cubeVertex{I},\cubeVertex{J})$ is an antipode of the $2k$-cube $\cube{I}{J}$.
% If $M$ has only peerless antipodes in $2$-cubes, then Lemma~\ref{lem:edge-implies-peerless} implies that $k = 1$, and hence the edge $(\cubeVertex{I},\cubeVertex{J})$ is parallel to $e_i \pm e_j$ where $\{i,j\} = I \symDiff J$.
Suppose $M$ has no peerless antipode in any $2k$-cube for $k\geq 2$.
As $M$ is an even set system, it also has no antipodes in any $\ell$-cube for $\ell \geq 3$ odd.
By \Cref{prop:antipode-implies-delta-matroid}, this implies that $M$ is an even $\Delta$-matroid.

Conversely, suppose that $M$ is an even $\Delta$-matroid, and $(\cubeVertex{I}, \cubeVertex{J}) \in \polyOf{M}$ is an antipode where $|I \symDiff J| \geq 4$.
By the weak Wenzel exchange property, for any $i \in I \symDiff J$ there exists $j \in J \symDiff I$ such that $I' = I \symDiff \{i, j\}$ and $J' = J \symDiff \{i, j\} \in M$. 
The condition $|I \symDiff J| \geq 4$ ensures that $I' \neq J$ and $J' \neq I$, and hence $(\cubeVertex{I'}, \cubeVertex{J'})$ is another distinct antipode in $\polyOf{M} \cap \cube{I}{J}$.
\end{proof}

\begin{proposition} \label{prop:type+2+D}
    Let $M$ be an even set system.
    Then $M$ is an even $\Delta$-matroid if and only if 
    \begin{itemize}
        \item it has no peerless antipode in any $4$-cube, and
        \item it has no isolated antipode in any $2k$-cube for $k > 2$.
    \end{itemize}
\end{proposition}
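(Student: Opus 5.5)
The forward direction follows from \Cref{prop:type+1+D}. If $M$ is an even $\Delta$-matroid, it has no peerless antipode in any $2k$-cube with $k \geq 2$. In particular it has none in a $4$-cube. Since isolated antipodes are peerless, it also has no isolated antipode in any $2k$-cube with $k > 2$.

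For the converse, suppose $M$ is an even set system with no peerless antipode in any $4$-cube and no isolated antipode in any $2k$-cube for $k>2$. Because $M$ is even, every antipode lies in a cube of even dimension, so there are no antipodes at all in odd-dimensional cubes. The hypotheses of \Cref{prop:antipode-implies-delta-matroid-2} then hold: there are no peerless antipodes in $3$- or $4$-cubes, and no isolated antipodes in $k$-cubes for $k\geq 5$, the odd values of $k$ being vacuous. Hence $M$ is a $\Delta$-matroid, and being even, it is an even $\Delta$-matroid.

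If one prefers a self-contained argument, the proof of \Cref{prop:antipode-implies-delta-matroid-2} can be specialised directly. Argue by induction on $2k \geq 4$ that $P(M)$ has no edge $(\cubeVertex{I},\cubeVertex{J})$ with $|I\symDiff J| = 2k$. The case $2k=4$ follows from \Cref{lem:edge-implies-peerless}. For $2k\geq 6$, suppose such an edge exists. It is not isolated, so the face $P(M)\cap\cube{I}{J}$ has dimension at least $2$, and there is a $2$-face $F$ of $P(M)$ containing the edge. The other edges of $F$ cannot be antipodes of $\cube{I}{J}$, since they would share a midpoint with $(\cubeVertex{I},\cubeVertex{J})$. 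So they are strictly shorter and, by induction, parallel to some $e_i\pm e_j$. Walking from $\cubeVertex{I}$ to $\cubeVertex{J}$ then needs at least $k\geq 3$ affinely independent directions, which contradicts $\dim F = 2$. Finally, \Cref{thm:polytope+edges} shows that an even set system whose edges are all parallel to $e_i \pm e_j$ is an even $\Delta$-matroid.

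The step needing the most care is this dimension count on the $2$-face $F$: showing that edges of the form $e_i\pm e_j$ that join $\cubeVertex{I}$ to $\cubeVertex{J}$ within $F$ must span a space of dimension at least $k$. Since the edges along one of the two boundary paths of $F$ from $\cubeVertex{I}$ to $\cubeVertex{J}$ must sum to $\cubeVertex{J}-\cubeVertex{I}$, a vector with $2k$ nonzero coordinates, and each edge vector has at most $2$ nonzero coordinates, their span has dimension at least $k$. That span lies in the direction space of $F$, which has dimension $2$, so this is the contradiction used above. Everything else is bookkeeping from earlier results.
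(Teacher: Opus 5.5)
Your proof is correct and matches the paper's: the forward direction comes from \Cref{prop:type+1+D} plus the fact that isolated antipodes are peerless, and the converse notes that evenness makes every odd-dimensional cube condition vacuous before invoking \Cref{prop:antipode-implies-delta-matroid-2}. Your optional self-contained version is that proposition's proof specialised to even systems. Its dimension count is sound: a basis of $d$ edge vectors has combined support of size at most $2d$, so $d\ge k$. This is more explicit than the paper's corresponding step.
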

\begin{proof}
    If $M$ is an even $\Delta$-matroid, by Proposition~\ref{prop:type+1+D} it has no peerless antipodes in any $2k$-cube for $k \geq 2$.
    This automatically implies no $2k$-cube contains an isolated antipode.

    For the converse direction, observe that as $M$ is even, $M$ has no antipodes in any $\ell$-cube for $\ell \geq 3$ odd.
    In particular, it has no peerless antipode in any $3$-cube or $4$-cube, and no isolated antipode in any $k$-cube for $k\geq 4$.
    Hence \Cref{prop:antipode-implies-delta-matroid-2} implies $M$ is an even $\Delta$-matroid.
    %
    % Conversely, suppose $M$ satisfies the peerless and isolated antipodes conditions in the claim, but is not an even $\Delta$-matroid.
    % Then there exists some edge $(\cubeVertex{I}, \cubeVertex{J})$ with $|I \symDiff J| = 2k$ for $k \geq 2$, assume it is an edge of minimal length with this condition.
    % Lemma~\ref{lem:edge-implies-peerless} immediately rules out the case that $k=2$, and so we can assume $k>2$.
    % As $(\cubeVertex{I}, \cubeVertex{J})$ is not isolated, there exists a two-dimensional face $F \subseteq \cube{I}{J}$ of $\polyOf{M}$ that contains $(\cubeVertex{I}, \cubeVertex{J})$ as an edge.
    % As $(\cubeVertex{I}, \cubeVertex{J})$ is peerless by Lemma~\ref{lem:edge-implies-peerless}, the other edges in $F$ are necessarily shorter, and so by our assumption all parallel to some $e_i \pm e_j$.
    % However, it takes at least $k$ affinely independent edges of the form $e_i \pm e_j$ to walk from $\cubeVertex{I}$ to $\cubeVertex{J}$, implying that $\dim(F) \geq k >2$, giving a contradiction.
    % \alex{TODO: be a bit more careful with the induction: why can't there be an even longer edge in the 2-face?}
\end{proof}

\subsection{Antipode cryptomorphisms for matroids}\label{ssec:matroids-antipodes}

We next restrict the results on even $\Delta$-matroids to matroids, obtaining a new cryptomorphism for matroids in terms of a local exchange axiom.

To do so, we recall a small amount of polyhedral geometry.
% Given integers $0 \leq k \leq n$, we recall the $(n,k)$-hypersimplex is
% \begin{align*}
% \cH_{n,k} = \conv\set{\cubeVertex{S} \in \RR^n}{|S| = k \, , \, S \subseteq [n]} \\    
% &= [0,1]^n \cap \set{x \in \RR^n}{\sum x_i = k}
% \end{align*}
% This is an $(n-1)$-dimensional polytope.
Suppose $I,J \subseteq [n]$ with $|I| = |J|$.
We define the hypersimplex $\hypersimplex{I}{J}$ to be
\begin{align*}
\hypersimplex{I}{J} &= \conv\set{\cubeVertex{S} \in \RR^n}{|S| = |I| \, , \, I \cap J \subseteq S \subseteq I \cup J} \, \\
&= \cube{I}{J} \cap \set{x \in \RR^n}{\sum x_i = |I|} \, .
\end{align*}
Note that as $|I| = |J|$, we must have $|I \symDiff J| = 2k$ for some $k \in \ZZ_{\geq 0}$.
As such, $\cube{I}{J}$ is a $2k$-cube and so $\hypersimplex{I}{J}$ has dimension $2k-1$.
More generally, we say a polytope $Q \subseteq \RR^n$ is a \defn{$(k,2k)$-hypersimplex} if $Q = \hypersimplex{I}{J}$ for some $I,J \subseteq [n]$ with $|I| = |J|$ and $|I \symDiff J| = 2k$.
We use this terminology as it is not hard to see that $\hypersimplex{I}{J}$ is affinely isomorphic to the standard $(k,2k)$-hypersimplex
\[
\hypersimplex{k}{2k} = \conv\set{\cubeVertex{S} \in \RR^{2k}}{|S| = k \, , \, S \subseteq [2k]}
\]
via the map that sends each vertex $\cubeVertex{S}$ of $\hypersimplex{I}{J}$ to the vertex $\cubeVertex{S \setminus (I \cap J)}$ of $\hypersimplex{k}{2k}$.

\Cref{def:peerless+isolated} applies without change to matroids, defining peerless and isolated antipodes.
Note that if $|I|=|J|$ then the points $\cubeVertex{I}$ and~$\cubeVertex{J}$,
geometrically antipodal within $\cube{I}{J}$, are also antipodal within $\hypersimplex{I}{J}$, and conversely.
So when working with a matroid we will speak of antipodes in hypersimplices, rather than in cubes.

As a direct corollary of \Cref{prop:type+1+D}, we get the following known characterisation.

\begin{corollary}\label{cor:type+1+A}
    Let $M$ be an equicardinal set system.
    Then $M$ is a matroid if and only if it has no peerless antipode in any $(k,2k)$-hypersimplex for $k\geq 2$.
\end{corollary}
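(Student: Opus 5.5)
The plan is to deduce this directly from \Cref{prop:type+1+D}. It suffices to check two things. First, an equicardinal set system is a matroid if and only if it is an even $\Delta$-matroid. Second, for an equicardinal $M$, having no peerless antipode in any $2k$-cube with $k\geq 2$ is the same condition as having no peerless antipode in any $(k,2k)$-hypersimplex with $k\geq 2$.

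For the first point, an equicardinal set system is automatically even. We already noted that a matroid is exactly an equicardinal $\Delta$-matroid; equivalently one can cite \Cref{thm:polytope+edges}. Hence, for equicardinal $M$, being a matroid is equivalent to being an even $\Delta$-matroid, and \Cref{prop:type+1+D} applies to $M$.

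For the second point, I compare the antipodes in the two settings. Take any antipode $(\cubeVertex{I},\cubeVertex{J})$ of $P(M)$. Since $|I|=|J|$, the difference $|I\symDiff J|=2k$ is even, so $\cube{I}{J}$ is a $2k$-cube and $\hypersimplex{I}{J}$ is a $(k,2k)$-hypersimplex. Every vertex of $P(M)$ has cardinality $|I|$, so $P(M)\cap\cube{I}{J}=P(M)\cap\hypersimplex{I}{J}$. The antipodes of $\cube{I}{J}$ lying in $P(M)$ are therefore exactly the antipodes of $\hypersimplex{I}{J}$ lying in $P(M)$, since the definition via $S\cap T=I\cap J$ and $S\symDiff T=I\symDiff J$ is unchanged. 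It follows that $(I,J)$ is peerless in the cube if and only if it is peerless in the hypersimplex. Conversely, each $(k,2k)$-hypersimplex $\hypersimplex{I}{J}$ sits inside the $2k$-cube $\cube{I}{J}$, so the correspondence runs in both directions. Combining the two points gives the corollary.

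I do not expect a real obstacle. The only care needed is the bookkeeping that the cube and hypersimplex indexings match, with $k\geq 2$ on both sides, and that antipodes in a cube which do not come from sets of $M$ of equal size simply do not occur.
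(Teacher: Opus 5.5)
Your proposal is correct and matches the paper's argument. The paper states this corollary as a direct consequence of \Cref{prop:type+1+D}, resting on the two facts you verify: an equicardinal $M$ is a matroid exactly when it is an even $\Delta$-matroid, and when $|I|=|J|$ the antipodes of $\cube{I}{J}$ lying in $P(M)$ are precisely those of $\hypersimplex{I}{J}$; you have simply written out this bookkeeping explicitly.
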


Furthermore, as a corollary of \Cref{prop:type+2+D}, we get a new characterisation in terms of local exchanges within $(2,4)$-hypersimplices and local `density' conditions within larger $(k,2k)$-hypersimplices.

\begin{corollary} \label{cor:type+2+A}
    Let $M$ be an equicardinal set system.
    Then $M$ is a matroid if and only if
    \begin{itemize}
        \item it has no peerless antipode in any $(2,4)$-hypersimplex, and
        \item it has no isolated antipode in any $(k,2k)$-hypersimplex for $k > 2$.
    \end{itemize}
\end{corollary}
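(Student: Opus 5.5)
We plan to deduce \Cref{cor:type+2+A} directly from \Cref{prop:type+2+D}, translating between antipodes in cubes and antipodes in hypersimplices. An equicardinal set system $M$ is in particular an even set system, and an equicardinal $\Delta$-matroid is exactly a matroid. So by \Cref{prop:type+2+D}, $M$ is a matroid if and only if $M$ has no peerless antipode in any $4$-cube and no isolated antipode in any $2k$-cube for $k>2$. What remains is to show that these two conditions coincide with the stated conditions on $(2,4)$- and $(k,2k)$-hypersimplices.

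The key observation is that for $I,J\in M$ with $M$ equicardinal, we have $|I|=|J|$ and $|I\symDiff J|=2k$. Moreover, every $S\in M$ with $I\cap J\subseteq S\subseteq I\cup J$ has $|S|=|I|$. Hence
\[
\polyOf{M}\cap\cube{I}{J}=\polyOf{M}\cap\hypersimplex{I}{J}.
\]
So the vertices of $\polyOf{M}$ lying in the $2k$-cube $\cube{I}{J}$ are exactly those lying in the $(k,2k)$-hypersimplex $\hypersimplex{I}{J}$. As noted before the statement, antipodes of $\cube{I}{J}$ whose two vertices have equal cardinality are precisely the antipodes of $\hypersimplex{I}{J}$. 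Consequently, $(I,J)$ is peerless (respectively isolated) as an antipode in $\cube{I}{J}$ if and only if it is peerless (respectively isolated) in $\hypersimplex{I}{J}$.

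Conversely, every antipode of $M$ lies in a cube of even dimension $2k$, and its hypersimplex is a $(k,2k)$-hypersimplex; every $(k,2k)$-hypersimplex arises this way. Therefore ``no peerless antipode in any $4$-cube'' is equivalent to ``no peerless antipode in any $(2,4)$-hypersimplex''. Likewise, ``no isolated antipode in any $2k$-cube, $k>2$'' is equivalent to ``no isolated antipode in any $(k,2k)$-hypersimplex, $k>2$''. This completes the argument.

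There is no serious obstacle. The only point needing care is checking that the definitions of peerless and isolated, taken relative to the cube, agree with those taken relative to the hypersimplex; equicardinality is what makes the two restrictions of $\polyOf{M}$ identical.
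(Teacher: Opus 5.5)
Your proposal is correct and follows the paper's route. The paper obtains this as a direct corollary of \Cref{prop:type+2+D}, using three facts: equicardinal set systems are even, equicardinal $\Delta$-matroids are matroids, and for $|I|=|J|$ the antipodes of $\cube{I}{J}$ coincide with those of $\hypersimplex{I}{J}$. Your observation that equicardinality forces $\polyOf{M}\cap\cube{I}{J}=\polyOf{M}\cap\hypersimplex{I}{J}$ is exactly the detail that lets the peerless and isolated conditions transfer unchanged between cubes and hypersimplices.
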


\begin{remark}
    For those who prefer the set system perspective over the polyhedral, we give a rephrasing of \Cref{cor:type+2+A} as a basis exchange axiom.
    An equicardinal set system $M$ is a matroid if and only if it satisfies the following two properties:
    \begin{enumerate}
        \item for all $A,B \in M$ such that $|A\symDiff B| = 4$, there exist $a, b \in A\symDiff B$ such that $A \symDiff \{a,b\},\, B \symDiff \{a,b\} \in M$;
        \item for all $A,B \in M$ such that $|A\symDiff B| > 4$, there exists $C \in M\setminus\{A,B\}$ such that $A \cap B \subseteq C \subseteq A \cup B$.
    \end{enumerate}
    Property~1 is a `local' basis exchange axiom:
    it is equivalent to (and could be restated as) any other formulation of matroid basis exchange restricted to bases with $|A\symDiff B|=4$.
    In its presence, property~2 ensures that our set system is sufficiently dense that the local exchange axiom implies a global exchange axiom.
\end{remark}

\begin{remark} \label{rem:strong-vs-weak}
    Corollaries \ref{cor:type+1+A} and \ref{cor:type+2+A} are reminiscent of the difference between strong and weak matroids within the Baker--Bowler--Lorscheid framework of matroids over partial hyperstructures \cite{BakerBowler}.
    Explicitly, \Cref{cor:type+1+A} is equivalent to $M$ satisfying the Grassmann--Pl\"ucker relations over the Krasner hyperfield $\KK$, or being a strong $\KK$-matroid.
    In contrast, a weak matroid satisfies only the three-term Grassmann--Pl\"ucker relations, which over $\KK$ is equivalent to having no peerless antipode in any $(2,4)$-hypersimplex.
    Weak matroids also have the condition that their support must be a matroid, but over $\KK$ this is tautological.
    \Cref{cor:type+2+A} shows we can instead replace this with a local density condition on the support of the weak matroid.
\end{remark}

\subsection{Antipode cryptomorphisms for strong $\Delta$-matroids}

In the rest of this section, we will give two different characterisations of strong $\Delta$-matroids in terms of peerless and isolated antipodes.
As with even $\Delta$-matroids in \Cref{ssec:even-antipodes}, 
2-cubes and smaller pose no problems. 
The first characterisation bans peerless antipodes in all larger hypercubes (\Cref{thm:strong+iff+no+peerless}), and the second characterisation bans peerless antipodes in hypercubes of small dimension $>2$, and isolated antipodes in hypercubes of dimensions beyond (\Cref{prop:type+2+B}).

\begin{theorem}\label{thm:strong+iff+no+peerless}
    Let $M$ be a set system.
    Then $M$ is a strong $\Delta$-matroid if and only if it has no peerless antipode in any $k$-cube for $k\geq 3$.
\end{theorem}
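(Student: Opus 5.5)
The plan is to prove the two directions separately, using \Cref{thm:hyperplane-equals-wenzel} as the bridge between strong $\Delta$-matroids and the weak Wenzel exchange property.

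\emph{Forward direction.} Suppose $M$ is a strong $\Delta$-matroid and $(\cubeVertex{I},\cubeVertex{J})$ is an antipode with $|I\symDiff J| = k\geq 3$. By \Cref{thm:hyperplane-equals-wenzel}, $M$ satisfies the hyperplane exchange property. We apply it to $I,J$ to get $a,b\in I\symDiff J$, possibly equal, with $I' = I\symDiff\{a,b\}$ and $J' = J\symDiff\{a,b\}$ in $M$. Then $I'\cap J' = I\cap J$ and $I'\symDiff J' = I\symDiff J$, so $(\cubeVertex{I'},\cubeVertex{J'})$ is an antipode of $\cube{I}{J}$. It remains to check that this antipode is distinct from $(I,J)$ and from $(J,I)$. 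Clearly $I'\neq I$. Also $I' = J$ would force $|I\symDiff J| = |\{a,b\}|\leq 2$, contradicting $k\geq 3$. Hence $(\cubeVertex{I},\cubeVertex{J})$ is not peerless. This mirrors the proof of \Cref{prop:type+1+D}.

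\emph{Reverse direction.} Suppose $M$ has no peerless antipode in any $k$-cube with $k\geq 3$. By \Cref{prop:antipode-implies-delta-matroid}, $M$ is a $\Delta$-matroid. We must verify the hyperplane exchange property for distinct $I,J\in M$.
\begin{itemize}
\item If $|I\symDiff J|\leq 2$, take $\{a,b\} = I\symDiff J$, or $a=b$ when $|I\symDiff J| = 1$. Then $I\symDiff\{a,b\} = J$ and $J\symDiff\{a,b\} = I$, both in $M$.
\item If $|I\symDiff J|\geq 3$, we argue by strong induction on $k = |I\symDiff J|$. The pair is not peerless, so there is another antipode $(S,T)$ of $\cube{I}{J}$ in $M$ with $S\notin\{I,J\}$. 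Write $D = S\symDiff I$; this is a nonempty proper subset of $I\symDiff J$, and $T = J\symDiff D$. Now $|I\symDiff S| = |D| < k$. By \Cref{lem:restriction}, the restriction $M_{I,S}$ inherits the no-peerless-antipode hypothesis, since its antipodes are antipodes of $M$ in subcubes. So by induction $M_{I,S}$ is strong, and hyperplane exchange between $I$ and $S$ gives $a,b\in D$ with $I\symDiff\{a,b\}$ and $S\symDiff\{a,b\}$ in $M$. But we need $J\symDiff\{a,b\}$, not $S\symDiff\{a,b\}$.
\end{itemize}

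\emph{Main obstacle.} The difficulty is coordinating the two exchanges so that both $I\symDiff\{a,b\}$ and $J\symDiff\{a,b\}$ lie in $M$. The cleanest route is to prove \Cref{thm:hyperplane-equals-wenzel}(\ref{item:hew 3}) instead, namely that $\bar M$ is an even $\Delta$-matroid. By \Cref{prop:type+1+D}, this reduces to showing that $\bar M$ has no peerless antipode in any $2k$-cube with $k\geq 2$.

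\begin{itemize}
\item Antipodes of $\bar M$ correspond to antipodes of $M$, possibly with $*$ added to the cube. Concretely, $\cube{\bar I}{\bar J}$ has dimension $|I\symDiff J|$ or $|I\symDiff J|+1$.
\item Suppose $(\bar I,\bar J)$ is an antipode of $\bar M$ in a cube of dimension at least $4$. Then $|I\symDiff J|\geq 3$, so $(I,J)$ has a peer $(S,T)$ in $M$. This gives $S\cap T = I\cap J$ and $S\symDiff T = I\symDiff J$.
\item Then $\bar S\cap\bar T = \bar I\cap\bar J$ and $\bar S\symDiff\bar T = \bar I\symDiff\bar J$, using the identity $\overline{A\symDiff B} = \bar A\symDiff\bar B$ and the fact that $*\in\bar S\cap\bar T$ iff $|S|$ and $|T|$ are both odd. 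The last condition matches the parity of $|I|$ and $|J|$: indeed $|S|+|T| = |I|+|J|$, and the parities of $|S|$ and $|T|$ are determined up to swapping by $|S\symDiff T|$ together with $|S|+|T|$.
\end{itemize}

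The delicate point is exactly this parity bookkeeping. When $|S|\not\equiv|I|$, swapping $S$ and $T$ should fix it, but I would need to check carefully that the lifted pair is a genuine peer in the same cube. Once that is settled, $(\bar I,\bar J)$ is not peerless, so $\bar M$ is an even $\Delta$-matroid, and \Cref{thm:hyperplane-equals-wenzel} then gives that $M$ is strong.
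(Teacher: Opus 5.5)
Your forward direction is correct and is the paper's argument. Citing \Cref{thm:hyperplane-equals-wenzel} there is unnecessary, since hyperplane exchange is the definition of strong. The reverse direction has a genuine gap, exactly at the parity bookkeeping you flagged.

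When $|I\symDiff J|$ is odd, $*\in\overline{I}\symDiff\overline{J}$, and every peer $(S,T)$ of $(I,J)$ lifts to a peer of $(\overline{I},\overline{J})$, so that case is fine. When $|I\symDiff J|$ is even, $*$ lies in both of $\overline{I},\overline{J}$ or in neither. Then $(\overline{S},\overline{T})$ is an antipode of $\cube{\overline{I}}{\overline{J}}$ only if $|S|\equiv|I| \pmod 2$. Your claim that $|S\symDiff T|$ and $|S|+|T|$ fix the parities of $|S|,|T|$ up to swapping is false here. Both numbers are even, which is consistent with $|S|,|T|$ both even or both odd, and swapping cannot help because $|S|\equiv|T|$.

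For example, $(\emptyset,[4])$ has the peer $(\{1\},\{2,3,4\})$. Its lift $(\{1,*\},\{2,3,4,*\})$ is an antipode of $\cube{\{*\}}{[4]\cup\{*\}}$, not of $\cube{\emptyset}{[4]}$. In fact no argument using only the hypothesis on $\cube{I}{J}$ itself can work. The system $M=\{\emptyset,\{1\},\{2,3,4\},[4]\}$ has no peerless antipode in $\cube{\emptyset}{[4]}$, yet $(\emptyset,[4])$ is isolated in $\overline{M}$. You must use the hypothesis on smaller cubes to produce a peer of the \emph{same} parity.

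That is the missing idea, and it is where the paper's real work lies.
\begin{itemize}
\item For $4$-cubes, \Cref{lem:peerless+cubes+imply+demicubes} uses a case analysis over several $3$-cubes to show that no $4$-demicube contains a peerless antipode.
\item For $2k$-cubes with $k>2$, the paper does not rule out peerless antipodes at all. It replaces \Cref{prop:type+1+D} by \Cref{prop:type+2+D}, which only requires ruling out \emph{isolated} antipodes.
\item Ruling out isolated antipodes follows from a parity chase. Suppose only $I,J$ have their parity in $\polyOf{M}\cap\cube{I}{J}$. Take an opposite-parity peer $(I',J')$, labelled so that $|I\symDiff J'|\ge 3$. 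Any peer of the odd-dimensional antipode $(I,J')$ has a member with the parity of $I$, lying in $\cube{I}{J}$ and different from $I$ and $J$.
\end{itemize}
If you keep \Cref{prop:type+1+D} as your target, you would need a same-parity peer in every even demicube of dimension at least $4$. Neither your argument nor this parity chase supplies that.
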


To prove this, we will show that set systems with no peerless antipodes in $k$-cubes can be lifted to even $\Delta$-matroids.
To analyse this lift, we require the following terminology.

\begin{definition}
Given a cube $\cube{I}{J}$, we define its \defn{even demicube} $\evencube{I}{J}$ and \defn{odd demicube} $\oddcube{I}{J}$ respectively as
\begin{align*}
    \evencube{I}{J} &= \set{S \in \cube{I}{J}}{|S| \equiv 0 \bmod 2} \, ,  & \oddcube{I}{J} &= \set{S \in \cube{I}{J}}{|S| \equiv 1 \bmod 2} \, .
\end{align*}
A polytope $D \subseteq \RR^n$ is a \defn{$k$-demicube} if $D = \evencube{I}{J}$ or $D = \oddcube{I}{J}$ for some $I,J \subseteq [n]$ where $k = |I \symDiff J|$.
Observe that a $k$-demicube is $k$-dimensional for $k\ge3$.
\end{definition}
A trivial but useful observation is that the vertices of $\cube{I}{J}$ are partitioned into the odd and even demicubes $\oddcube{I}{J}$ and $\evencube{I}{J}$, as highlighted in the following example.

\begin{example}
    \label{ex:demi-cube-34}
    Suppose that $\cube{I}{J}$ is a $3$-dimensional cube or a $4$-dimensional cube.
    The demicubes of $\cube{I}{J}$ can be seen as a partition of the vertices into sets of like parity.
    If we suppose that $| I |$ is odd then all the black points below are in $\oddcube{I}{J}$ and all the white points are in $\evencube{I}{J}$.
    On the left we have the case of a $3$-dimensional cube and on the right the $4$-dimensional cube.
    \begin{center}
        \begin{tikzpicture}
            [
                scale=1.3,
                vertex/.style={inner sep=1.4pt,circle,draw=black,fill=black,thick},
                wvertex/.style={inner sep=1.4pt,circle,draw=black,fill=white,thick},
            ]
            \begin{scope}
                \coordinate (bl1) at (0,0);
                \coordinate (br1) at (1,0);
                \coordinate (tl1) at (0,1);
                \coordinate (tr1) at (1,1);
    
                \coordinate (bl2) at (0.3,0.7);
                \coordinate (br2) at (1.3,0.7);
                \coordinate (tl2) at (0.3,1.7);
                \coordinate (tr2) at (1.3,1.7);
                
                \draw (bl1) -- (br1);
                \draw (bl1) -- (tl1);
                \draw (br1) -- (tr1);
                \draw (tl1) -- (tr1);
    
                \draw[dashed] (bl2) -- (br2);
                \draw[dashed] (bl2) -- (tl2);
                \draw (br2) -- (tr2);
                \draw (tl2) -- (tr2);
    
                \draw[dashed] (bl1) -- (bl2);
                \draw (tl1) -- (tl2);
                \draw (br1) -- (br2);
                \draw (tr1) -- (tr2);

                \node[below left] at (bl1) {$\cubeVertex{I}$};
                \node[above right] at (tr2) {$\cubeVertex{J}$};
                \node[vertex] at (bl1) {};
                \node[wvertex] at (tl1) {};
                \node[wvertex] at (br1) {};
                \node[vertex] at (tr1) {};
                
                \node[wvertex] at (tr2) {};
                \node[vertex] at (tl2) {};
                \node[wvertex] at (bl2) {};
                \node[vertex] at (br2) {};
            \end{scope}
            \begin{scope}[shift={(3, 0)}]
                \coordinate (bl1) at (0,0);
                \coordinate (br1) at (1,0);
                \coordinate (tl1) at (0,1);
                \coordinate (tr1) at (1,1);
    
                \coordinate (bl2) at (0.3,0.7);
                \coordinate (br2) at (1.3,0.7);
                \coordinate (tl2) at (0.3,1.7);
                \coordinate (tr2) at (1.3,1.7);

                \coordinate (bl12) at (1.7,0.4);
                \coordinate (br12) at (2.7,0.4);
                \coordinate (tl12) at (1.7,1.4);
                \coordinate (tr12) at (2.7,1.4);
    
                \coordinate (bl22) at (2.0,1.1);
                \coordinate (br22) at (3.0,1.1);
                \coordinate (tl22) at (2.0,2.1);
                \coordinate (tr22) at (3.0,2.1);
                
                \draw (bl1) -- (br1);
                \draw (bl1) -- (tl1);
                \draw (br1) -- (tr1);
                \draw (tl1) -- (tr1);
    
                \draw[dashed] (bl2) -- (br2);
                \draw[dashed] (bl2) -- (tl2);
                \draw (br2) -- (tr2);
                \draw (tl2) -- (tr2);
    
                \draw[dashed] (bl1) -- (bl2);
                \draw (tl1) -- (tl2);
                \draw (br1) -- (br2);
                \draw (tr1) -- (tr2);

                \draw (bl12) -- (br12);
                \draw (bl12) -- (tl12);
                \draw (br12) -- (tr12);
                \draw (tl12) -- (tr12);
    
                \draw[dashed] (bl22) -- (br22);
                \draw[dashed] (bl22) -- (tl22);
                \draw (br22) -- (tr22);
                \draw (tl22) -- (tr22);
    
                \draw[dashed] (bl12) -- (bl22);
                \draw (tl12) -- (tl22);
                \draw (br12) -- (br22);
                \draw (tr12) -- (tr22);

                \draw[dotted, thick] (bl1) -- (bl12);
                \draw[dotted, thick] (bl2) -- (bl22);
                \draw[dotted, thick] (br1) -- (br12);
                \draw[dotted, thick] (br2) -- (br22);
                \draw[dotted, thick] (tl1) -- (tl12);
                \draw[dotted, thick] (tl2) -- (tl22);
                \draw[dotted, thick] (tr1) -- (tr12);
                \draw[dotted, thick] (tr2) -- (tr22);
    
                \node[below left] at (bl1) {$\cubeVertex{I}$};
                \node[above right] at (tr2) {};% {$\cubeVertex{J}$};
                \node[vertex] at (bl1) {};
                \node[wvertex] at (tl1) {};
                \node[below] at (br1) {$\cubeVertex{I'}$};
                \node[wvertex] at (br1) {};
                \node[vertex] at (tr1) {};
                
                \node[wvertex] at (tr2) {};
                \node[vertex] at (tl2) {};
                \node[wvertex] at (bl2) {};
                \node[vertex] at (br2) {};

                \node[below left] at (bl12) {};% {$\cubeVertex{I}$};
                \node[above right] at (tr22) {$\cubeVertex{J}$};
                \node[wvertex] at (bl12) {};
                \node[vertex] at (tl12) {};
                \node[vertex] at (br12) {};
                \node[wvertex] at (tr12) {};
                
                \node[vertex] at (tr22) {};
                \node[above] at (tl22) {$\cubeVertex{J'}$};
                \node[wvertex] at (tl22) {};
                \node[vertex] at (bl22) {};
                \node[wvertex] at (br22) {};

            \end{scope}
        \end{tikzpicture}
    \end{center}
\end{example}

\begin{lemma}\label{lem:demi+projection}
    For any $i \in I \symDiff J$, the projection map $\pi_{i}\colon \RR^{[n]} \rightarrow \RR^{[n] - i}$ gives
    \[
    \pi_i(\evencube{I}{J}) = \pi_i(\oddcube{I}{J}) = \cube{I - i}{J - i} \, ,
    \]
    and induces a bijection on the vertices. 
\end{lemma}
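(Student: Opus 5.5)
The plan is to work directly at the level of vertex sets, since both demicubes and the cube $\cube{I-i}{J-i}$ are defined by their vertices. I read $\evencube{I}{J}$ and $\oddcube{I}{J}$ as the sets $\cubeVertex{S}$ with $I\cap J\subseteq S\subseteq I\cup J$ of the given parity; equivalently one can take their convex hulls, and the claim for polytopes follows from the vertex statement because a linear map sends the convex hull of a finite set onto the convex hull of its image. Write $\pi_i(\cubeVertex{S}) = \cubeVertex{S\setminus\{i\}}$ in $\RR^{[n]-i}$; this is simply forgetting the $i$-th coordinate.

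The first step is to check where vertices land. Since $i\in I\symDiff J$, we have $i\notin I\cap J$ and $i\in I\cup J$. Hence for any $S$ with $I\cap J\subseteq S\subseteq I\cup J$ we get
\[
(I\cap J)\setminus\{i\} = (I-i)\cap(J-i) \subseteq S\setminus\{i\} \subseteq (I-i)\cup(J-i),
\]
so $\pi_i$ maps every vertex of $\cube{I}{J}$, and in particular every vertex of each demicube, to a vertex of $\cube{I-i}{J-i}$.

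Next I construct the inverse explicitly. Given a vertex $T$ of $\cube{I-i}{J-i}$, so $(I\cap J)\setminus\{i\}=I\cap J\subseteq T\subseteq (I\cup J)\setminus\{i\}$, its only preimages among vertices of $\cube{I}{J}$ are $T$ and $T\cup\{i\}$. Both lie in $\cube{I}{J}$ precisely because $i\in I\cup J$ and $i\notin I\cap J$. These two sets have opposite parities, so exactly one of them lies in $\evencube{I}{J}$ and exactly one in $\oddcube{I}{J}$. This shows that $\pi_i$ restricted to either demicube is a bijection onto the vertices of $\cube{I-i}{J-i}$.

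Finally, taking convex hulls gives $\pi_i(\evencube{I}{J}) = \pi_i(\oddcube{I}{J}) = \cube{I-i}{J-i}$ as polytopes. There is no real obstacle here. The only point needing care is the hypothesis $i\in I\symDiff J$: it is used exactly to guarantee that both lifts $T$ and $T\cup\{i\}$ lie in the cube, which is what makes the map surjective with a unique lift of each parity.
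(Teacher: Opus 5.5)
Your proposal is correct and follows essentially the same route as the paper: reduce to vertices, then observe that each vertex $\cubeVertex{T}$ of $\cube{I-i}{J-i}$ has exactly the two lifts $T$ and $T\cup\{i\}$ in $\cube{I}{J}$, of opposite parity. The only cosmetic difference is that you read off surjectivity directly from the existence of the lift, whereas the paper gets it by counting $2^{|I\symDiff J|-1}$ vertices on each side; you also explicitly check that $\pi_i$ lands in $\cube{I-i}{J-i}$, which the paper leaves implicit.
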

\begin{proof}
    Because projection commutes with convex hull,
    it suffices to show the bijection on the vertices.
    We prove it for $\evencube{I}{J}$; the $\oddcube{I}{J}$ claim is very similar.
    First note that $\pi_i$ is injective, as each vertex $\cubeVertex{S} \in \cube{I-i}{J-i}$ has a unique set in its preimage: $\cubeVertex{S}$ if $S$ has even cardinality and $\cubeVertex{S + i}$ if $S$ has odd cardinality.
    Moreover, it is surjective as both polytopes have exactly $2^{|I\symDiff J| -1}$ vertices.
\end{proof}

\begin{example}
    We see this lemma in action by looking at the projection map $\pi_1$ for when $I = \varnothing$ and $J = \{1, 2, 3\}$.
    \begin{center}
    \begin{tikzpicture}
            [
                scale=1.3,
                vertex/.style={inner sep=1.4pt,circle,draw=black,fill=black,thick},
                wvertex/.style={inner sep=1.4pt,circle,draw=black,fill=white,thick},
            ]
            \begin{scope}
                \coordinate (bl1) at (0,0);
                \coordinate (br1) at (1,0);
                \coordinate (tl1) at (0,1);
                \coordinate (tr1) at (1,1);
    
                \coordinate (bl2) at (0.3,0.7);
                \coordinate (br2) at (1.3,0.7);
                \coordinate (tl2) at (0.3,1.7);
                \coordinate (tr2) at (1.3,1.7);
                
                \draw (bl1) -- (br1);
                \draw (bl1) -- (tl1);
                \draw (br1) -- (tr1);
                \draw (tl1) -- (tr1);
    
                \draw[dashed] (bl2) -- (br2);
                \draw[dashed] (bl2) -- (tl2);
                \draw (br2) -- (tr2);
                \draw (tl2) -- (tr2);
    
                \draw[dashed] (bl1) -- (bl2);
                \draw (tl1) -- (tl2);
                \draw (br1) -- (br2);
                \draw (tr1) -- (tr2);

                \node[below left] at (bl1) {$\cubeVertex{\varnothing}$};
                \node[below right] at (br1)  {$\cubeVertex{\{2\}}$};
                \node[left] at (tl1)  {$\cubeVertex{\{1\}}$};

                \node[right] at (br2)  {$\cubeVertex{\{2, 3\}}$};
                \node[above left] at (tl2)  {$\cubeVertex{\{1, 3\}}$};
                \node[above right] at (tr2)  {$\cubeVertex{\{1, 2, 3\}}$};
                
                \node[vertex] at (bl1) {};
                \node[wvertex] at (tl1) {};
                \node[wvertex] at (br1) {};
                \node[vertex] at (tr1) {};
                
                \node[wvertex] at (tr2) {};
                \node[vertex] at (tl2) {};
                \node[wvertex] at (bl2) {};
                \node[vertex] at (br2) {};
            \end{scope}
            
            \begin{scope}[shift={(2.5, 1)}]
                \node at (0, 0) {{\Large $\xrightarrow{\pi_1}$}};
            \end{scope}
            
            \begin{scope}[shift={(3.5, 0.4)}]
                \coordinate (bl1) at (0,0);
                \coordinate (br1) at (1,0);
                \coordinate (tl1) at (0,1);
                \coordinate (tr1) at (1,1);
                
                \draw (bl1) -- (br1);
                \draw (bl1) -- (tl1);
                \draw (br1) -- (tr1);
                \draw (tl1) -- (tr1);
    
                \node[below left] at (bl1) {$\cubeVertex{\varnothing}$};
                \node[below right] at (br1)  {$\cubeVertex{\{2\}}$};
                \node[above left] at (tl1)  {$\cubeVertex{\{3\}}$};
                \node[above right] at (tr1)  {$\cubeVertex{\{2, 3\}}$};
                
                \node[vertex] at (bl1) {};
                \node[wvertex] at (tl1) {};
                \node[wvertex] at (br1) {};
                \node[vertex] at (tr1) {};
            \end{scope}
        \end{tikzpicture}
    \end{center}
    Note that two vertices in $\cube{\varnothing}{\{1, 2, 3\}}$ get mapped to each vertex in $\cube{\varnothing}{\{2, 3\}}$. Each of these pairs of vertices have a different parity as one contains $1$ and the other doesn't.
    For example
    \[
        \pi_1\left(\cubeVertex{\{1, 3\}}\right) = \pi_1\left(\cubeVertex{\{3\}}\right) = \cubeVertex{\{3\}}.
    \]
\end{example}

We now turn our attention to antipodes in demicubes.
Observe that any sets $S,T$ with $S\cap T = I \cap J$ and $S\symDiff T = I \symDiff J$ define the same demicubes $\evencube{S}{T} = \evencube{I}{J}$ and $\oddcube{S}{T} = \oddcube{I}{J}$.
However, the vertices $\cubeVertex{S}$ and $\cubeVertex{T}$ are only contained in $\evencube{I}{J}$ (resp.\ $\oddcube{I}{J}$) if they have the correct parity.
We say the pair $(\cubeVertex{S},\cubeVertex{T})$ is an \emph{antipode} of the demicube $\evencube{I}{J}$ (resp.\ $\oddcube{I}{J}$) if it is an antipode of the cube $\cube{I}{J}$ and contained in the demicube $\evencube{I}{J}$ (resp.\ $\oddcube{I}{J}$).
Note that for $(\cubeVertex{S},\cubeVertex{T})$ to be contained in a demicube, both sets must have the same parity and so we necessarily have $|S \symDiff T|$ is even.
It follows that $k$-demicubes have antipodes only when $k$ is even.

Given some demicube $D$, we say that $(\cubeVertex{I},\cubeVertex{J}) \subseteq P(M)$ is a \defn{peerless antipode in $D$} if it is the only antipode of $D$ contained in $P(M)$.
Similarly, we say that $(\cubeVertex{I},\cubeVertex{J}) \subseteq P(M)$ is an \defn{isolated antipode in $D$} if the restriction of $P(M)$ to $D$ is exactly $(\cubeVertex{I},\cubeVertex{J})$.
Note that in both cases, this is a strictly stronger condition than being a peerless or isolated antipode according to \Cref{def:peerless+isolated}. 
The demicube $D$ is a subpolytope of $\cube{I}{J}$
(with $|I\symDiff J|$ even),
so if $\cube{I}{J}$ contains exactly two antipodes, one of each parity,
then each antipode becomes peerless in its demicube.

\begin{lemma}\label{lem:peerless+cubes+imply+demicubes}
    Suppose $M$ has no peerless antipodes in any $k$-cube for $k =3,4$.
    Then $M$ also has no peerless antipodes in any $4$-demicube.
\end{lemma}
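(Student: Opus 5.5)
We argue directly from the definitions. Let $(\cubeVertex{I},\cubeVertex{J})$ be an antipode of $P(M)$ in a $4$-demicube, so $|I\symDiff J|=4$ and $|I|\equiv|J| \bmod 2$; write $I\symDiff J=\{a,b,c,d\}$. We must produce $X\subseteq\{a,b,c,d\}$ with $|X|=2$ and $I\symDiff X,\ J\symDiff X\in M$, since these are exactly the other antipodes of $\cube{I}{J}$ of the same parity as $I$.

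First we use the hypothesis on the $4$-cube $\cube{I}{J}$. It gives a second antipode $(\cubeVertex{S},\cubeVertex{T})$ of $\cube{I}{J}$ in $P(M)$, i.e.\ $S=I\symDiff Y$ and $T=J\symDiff Y$ for some $Y\subseteq\{a,b,c,d\}$ with $Y\neq\emptyset$ and $Y\neq\{a,b,c,d\}$. If $|Y|=2$ we are done. Otherwise $|Y|$ is odd. Replacing $Y$ by its complement (i.e.\ swapping $S$ and $T$), we may assume $|Y|=1$, say $S=I\symDiff\{a\}$ and $T=J\symDiff\{a\}$, both in $M$.

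Next we use the $3$-cube hypothesis on $\cube{S}{J}$. This is the facet of $\cube{I}{J}$ where coordinate $a$ agrees with $J$, and $|S\symDiff J|=3$. It contains a second antipode $(\cubeVertex{U},\cubeVertex{V})$ in $P(M)$. Its two sets have opposite parities, so we may take $U$ to have the parity of $I$. Since $U\ne J$, the only option is $U=I\symDiff\{a,x\}$ for some $x\in\{b,c,d\}$, and then $V=J\symDiff\{x\}$. Relabel so that $x=b$; this gives $I\symDiff\{a,b\},\ J\symDiff\{b\}\in M$. Symmetrically, the $3$-cube $\cube{I}{T}$ (the facet where $a$ agrees with $I$) yields $y\in\{b,c,d\}$ with $I\symDiff\{y\},\ J\symDiff\{a,y\}\in M$. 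If $y=b$, then $I\symDiff\{a,b\}$ and $J\symDiff\{a,b\}$ lie in $M$, and we are done with $X=\{a,b\}$.

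The main obstacle is closing the case analysis when the indices produced do not match, here $y\neq b$, say $y=c$. The plan is to keep feeding newly found odd-parity vertices into further $3$-cube hypotheses, specifically to the pairs $(I,\,J\symDiff\{b\})$ and $(I\symDiff\{c\},\,J)$, each of symmetric-difference size $3$. Each application produces a new pair $I\symDiff\{p,q\},\ J\symDiff\{r\}$ or $I\symDiff\{r\},\ J\symDiff\{p,q\}$ with $\{p,q,r\}$ constrained. We then track which elements of $\{a,b,c,d\}$ have appeared as singletons on the $I$-side and on the $J$-side, and which $2$-sets appear on each side. Since only the four indices $a,b,c,d$ are available, the constraints force a $2$-set that occurs on both sides after finitely many steps, in the style of the pigeonhole arguments in \Cref{lem:odd+antipode+exchange} and \Cref{lem:even+antipode+exchange}. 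Such a matching $X$ gives the required antipode $(\cubeVertex{I\symDiff X},\cubeVertex{J\symDiff X})$ of the demicube.

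If this bookkeeping becomes unwieldy, the fallback is to exploit the $S_4$-symmetry of $\{a,b,c,d\}$ together with the complementation symmetry $X\mapsto\{a,b,c,d\}\setminus X$. The aim would be to reduce to a single normal form and show that the remaining configuration would make some $3$-face antipode peerless, contradicting the hypothesis.
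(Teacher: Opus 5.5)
Everything up to the case $y\neq b$ is correct, and it is exactly the paper's argument phrased directly instead of by contradiction. The paper normalises to $I=\emptyset$, $J=[4]$, $a=1$. It obtains your two $3$-cube antipodes as its pairs $(A,A')$ and $(B,B')$. It rules out $x=y$ (``$A$ and $B$ in the same row'') because that gives a second antipode of the demicube. It then applies the $3$-cube hypothesis to precisely the two pairs you name, $(I,J\symDiff\{b\})$ and $(I\symDiff\{c\},J)$.

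The gap is that you stop there. Your claim that with only four indices ``the constraints force a $2$-set that occurs on both sides after finitely many steps'' is not an argument. Finiteness gives no guarantee of progress, since an application of the $3$-cube hypothesis can return only sets you already have. Your fallback is no more specific. This final step is the real content of the lemma, so it has to be carried out.

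It closes after one application to each of the two pairs you chose.
\begin{itemize}
\item Since $(I\symDiff\{b\})\symDiff J$ has been reduced to $\{a,c,d\}$, the cube $\cube{I}{J\symDiff\{b\}}$ gives $I\symDiff\{z\},\,J\symDiff\{b,z\}\in M$ for some $z\in\{a,c,d\}$.
\begin{itemize}
\item If $z=a$, then $J\symDiff\{a,b\}$ pairs with $I\symDiff\{a,b\}$, so $X=\{a,b\}$ works.
\item If $z=d$, then $J\symDiff\{b,d\}=I\symDiff\{a,c\}$ pairs with $J\symDiff\{a,c\}$, so $X=\{a,c\}$ works.
\item Hence we may assume $z=c$, giving $J\symDiff\{b,c\}\in M$.
\end{itemize}
\item Symmetrically, $\cube{I\symDiff\{c\}}{J}$ gives $I\symDiff\{c,w\},\,J\symDiff\{w\}\in M$ for some $w\in\{a,b,d\}$.
\begin{itemize}
\item If $w=a$, this gives $X=\{a,c\}$.
\item If $w=d$, then $I\symDiff\{c,d\}=J\symDiff\{a,b\}$, which gives $X=\{a,b\}$.
\item Hence we may assume $w=b$, giving $I\symDiff\{b,c\}\in M$.
\end{itemize}
\end{itemize}
Now $X=\{b,c\}$ is the required second antipode of the demicube. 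With $a,b,c,d=1,3,2,4$, the two sets found are the paper's $C=\{1,4\}$ and $D=\{2,3\}$. With this case check inserted, your proof is complete.
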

\begin{proof}
    For ease of notation, we let $\cQ_{X, Y} = \cube{X}{Y} \cap \polyOf{\matroid}$ and invite the reader to use \Cref{ex:demi-cube-34} when reading this proof.
    Suppose for contradiction that $(\cubeVertex{I}, \cubeVertex{J}) \subseteq \polyOf{\matroid}$ is a peerless antipode in a $4$-demicube $D$.
    Let $D'$ be the other $4$-demicube of $\cube{I}{J}$.
    By a suitable relabelling, we may assume that $I = \varnothing$ and $J = \{1, 2, 3, 4\}$, i.e.\ $D = \evencube{\emptyset}{[4]}$ and $D' = \oddcube{\emptyset}{[4]}$.
    %and let $\cD = D \cap \polyOf{\matroid}$. 
    %Let $\cQ = \cQ_{I, J}$ be a $4$-cube of $\polyOf{\matroid}$ which contains as a demicube and let $\cD'$ be the other $4$-demicube of $\polyOf{\matroid}$.

    Let $\cQ = \cQ_{I, J}$, and define $\cD = D \cap \polyOf{\matroid}$ and $\cD' = D' \cap \polyOf{\matroid}$.
    As $\cQ$ has no peerless antipodes by assumption, $\cD'$ must contain an antipode, say $(\cubeVertex{I'}, \cubeVertex{J'})$.
    All antipodes of $\cD'$ are of the form $I' = \{i\}$ and $J' = [n] - \{i\}$,
    hence up to another relabelling we may assume that $I' = \{1\}$ and $J' = \{2, 3, 4\}$.
    %Without loss of generality we may assume that $(\cubeVertex{I}, \cubeVertex{I'})$ and $(\cubeVertex{J}, \cubeVertex{J'})$ are edges such that $\cube{I}{I'}$ and $\cube{J}{J'}$ are $2$-cubes where $I \symDiff I' = J \symDiff J'$.
    %As before, without loss of generality, up to a relabelling we may assume that $I' = \{1\} = J\symDiff J'$ implying $J' = \{2, 3, 4\}$.
    It follows that $\cube{I}{J'}$ and $\cube{I'}{J}$ are $3$-cubes with antipodes $(\cubeVertex{I}, \cubeVertex{J'})$ and $(\cubeVertex{I'}, \cubeVertex{J})$ respectively that are both present in $\polyOf{\matroid}$. 
    By assumption, $(\cubeVertex{I}, \cubeVertex{J'})$ and $(\cubeVertex{I'}, \cubeVertex{J})$ must not be peerless antipodes in $\cQ_{I, J'}$ and $\cQ_{I', J}$ respectively, and therefore each of $\cQ_{I, J'}$ and $\cQ_{I', J}$ must contain an additional antipode, whose vertices have different parity.
    In other words there is an antipode $(\cubeVertex{A}, \cubeVertex{A'})$ in $\cQ_{I, J'}$ and an antipode $(\cubeVertex{B}, \cubeVertex{B'})$ in $\cQ_{I', J}$ such that $\cubeVertex{A}, \cubeVertex{B} \in \cD$ and $\cubeVertex{A'}$, $\cubeVertex{B'} \in \cD'$ (up to relabelling).
    There are three potential antipodes in each of $\cQ_{I, J'}$ and $\cQ_{I', J}$, respectively:
    \begin{align}
        A &= \{3, 4\} \text{ and } A' = \{2\} && B = \{1, 2\} \text{ and } B' = \{1, 3, 4\}\nonumber\\
        A &= \{2, 4\} \text{ and } A' = \{3\} && B = \{1, 3\} \text{ and } B' = \{1, 2, 4\}\label{ex:optionsAB}\\
        A &= \{2, 3\} \text{ and } A' = \{4\} && B = \{1, 4\} \text{ and } B' = \{1, 2, 3\}\nonumber
    \end{align}
    
    In each case $I$, $J$, $A$, and $B$ are all distinct.
    As $(\cubeVertex{I}, \cubeVertex{J})$ is a peerless antipode in $\cD$, we know that $(\cubeVertex{A}, \cubeVertex{B})$ is not an antipode in $\cQ$.
    %If $(\cubeVertex{A}, \cubeVertex{B})$ is an antipode in $\cQ$ (and therefore in $\cD$), this contradicts our assumption that $(\cubeVertex{I}, \cubeVertex{J})$ is a peerless antipode in $\cD$; therefore it is not an antipode in $\cQ$.
    In other words, $A$ and $B$ cannot be in the same row in (\ref{ex:optionsAB}).
    We have enough symmetry left to permute the labels $\{2,3,4\}$ arbitrarily, and this acts transitively on the pairs of $A$ and~$B$ not in the same row.
    So we relabel $\{2,3,4\}$ to assume
    \[
        A = \{3, 4\},\, A' = \{2\},\,B = \{1, 3\},\, B' = \{1, 2, 4\}.
    \]
    Recall also that
    \[
        I = \varnothing,\, I' = \{1\},\, J = \{1, 2, 3, 4\},\, J' = \{2, 3, 4\}\,.
    \]
    We study the $3$-cubes $\cube{I}{B'}$ and $\cube{A'}{J}$.%, $\cube{I'}{A}$ and $\cube{B}{J'}$.
    
    Focusing first on $\cube{I}{B'}$, the only points from the set $\{I', J, J', A, A', B\}$ contained in $\cube{I}{B'}$ are $A'$ and $I'$.
    Furthermore, as $\cQ_{I, B'}$ has no peerless antipodes, there must exist some point $\cubeVertex{C} \in \cD$ to make an antipode in $\cQ_{I, B'}$ where $C$ is distinct from $A$, $B$, $I$, and $J$.
    Either $C = \{1, 4\}$ with $A'$ as an antipode, $C = \{2, 4\}$ with $I'$ as its antipode or $C = \{1, 2\}$ and its antipode $\{4\}$ must also be present.
    If $C = \{2, 4\}$ then $(\cubeVertex{C}, \cubeVertex{B})$ is an antipode in  $\cQ$ and therefore in $\cD$, contradicting our assumption.
    Similarly, if $C = \{1, 2\}$ then $(\cubeVertex{C}, \cubeVertex{A})$ is an antipode in $\cQ$.
    Therefore $C = \{1, 4\}$.

    By a similar process as above for $\cube{A'}{J}$, only the points $B'$ and $J'$ are contained in $\cube{A'}{J}$. 
    Since $\cQ_{A', J}$ has no peerless antipode, there must exist some point $\cubeVertex{D} \in \cD$ to make an additional antipode in $\cQ_{A', J}$.
    In this case, either $D = \{1, 2\}$ with $J'$ as its antipode, $D = \{2, 3\}$ with $B'$ as its antipode or $D = \{2, 4\}$ and its antipode $\{1, 2, 3\}$ must also be present.
    As happened above, if $D = \{1, 2\}$ or $D = \{2, 4\}$ we get a contradiction.
    Therefore, $D = \{2, 3\}$.

    But then $C$ and $D$ are present in $\cQ$ and are antipodes,
    contradicting the fact that $(\cubeVertex{I}, \cubeVertex{J})$ is the only antipode in $\cQ$ and therefore in $\cD$.
    Since all of our cases have ended in contradiction, our initial assumption that $\cD$ is a $4$-demicube of $\polyOf{\matroid}$ with a peerless antipode is false, thus proving the lemma.
\end{proof}

The following lemma shows that \eqref{it:4-cubes} implies \eqref{it:3-even-Delta} in \Cref{thm:A}.

\begin{lemma}\label{lem:peerless+implies+lift+even+delta}
    Suppose $M$ has no peerless antipodes in any $k$-cube for $k \geq 3$.
    Then $\overline{M}$ is an even $\Delta$-matroid.
\end{lemma}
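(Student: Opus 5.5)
Since $\overline{M}$ is an even set system on $[n]\cup\{*\}$, I will apply \Cref{prop:type+1+D}. It then suffices to show that $\overline{M}$ has no peerless antipode in any $2k$-cube with $k\ge 2$. So take an antipode $(\cubeVertex{\overline I},\cubeVertex{\overline J})$ of $\overline{M}$ with $|\overline I\symDiff\overline J| = 2k\ge 4$, and look for a second antipode of $\overline M$ in the same cube. Using the identity $\overline{I}\symDiff\overline{J}=\overline{I\symDiff J}$ established in the proof of \Cref{thm:hyperplane-equals-wenzel}, I split into two cases according to whether $*\in\overline I\symDiff\overline J$.

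\emph{Case 1: $*\notin \overline I\symDiff\overline J$.} Then $|I|\equiv|J|$ and $I\symDiff J=\overline I\symDiff\overline J$ has size $2k\ge 4$. The cube $\cube{\overline I}{\overline J}$ has the $*$-coordinate fixed. Its vertices in $\overline M$ are exactly the lifts of sets $S\in M$ lying in $\cube{I}{J}$ whose parity agrees with that of $|I|$. Hence antipodes of $\overline M$ in this cube correspond to antipodes of $M$ in the demicube of $\cube{I}{J}$ containing $\cubeVertex I$. I therefore need: \emph{$M$ has no peerless antipode in any $2k$-demicube with $k\ge2$.} For $k=2$ this is exactly \Cref{lem:peerless+cubes+imply+demicubes}. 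For $k\ge3$ I plan an induction on $k$ using \Cref{lem:demi+projection}-style reductions. Since $(\cubeVertex I,\cubeVertex J)$ is not peerless in the cube $\cube IJ$, there is another antipode $(S,T)$ of $M$ in $\cube IJ$. If $S,T$ have the parity of $I$, we are done. Otherwise $S,T$ lie in the other demicube. In that situation I pass to subcubes: for $s\in S\symDiff I$, the cube $\cube{I}{T\symDiff\{s\}}$-type subcubes of dimension $2k-1$ contain mixed-parity antipodes such as $(I, T')$ with $T'$ close to $T$. Applying the hypothesis there (no peerless antipodes in odd cubes) produces new vertices of $M$. Applying the induction hypothesis on $(2k-2)$-demicubes then yields a same-parity antipode in the original demicube. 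This mimics the chase in \Cref{lem:peerless+cubes+imply+demicubes}.

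\emph{Case 2: $*\in\overline I\symDiff\overline J$.} Here $|I\symDiff J|=2k-1\ge3$ is odd. The cube $\cube{\overline I}{\overline J}$ projects, by forgetting $*$, onto $\cube IJ$, and by \Cref{lem:demi+projection} the lift $S\mapsto\overline S$ restricted to $\cube IJ$ is a bijection onto one demicube of $\cube{\overline I}{\overline J}$. Moreover, antipodes of $\cube IJ$ lift to antipodes of $\cube{\overline I}{\overline J}$. Since $2k-1\ge3$, the hypothesis provides a second antipode $(S,T)\ne(I,J),(J,I)$ of $M$ in $\cube IJ$, and its lift $(\overline S,\overline T)$ is a second antipode of $\overline M$. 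So this case is immediate.

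The main obstacle is the demicube statement for $k\ge3$ in Case 1, which is what makes Case 1 hard. The odd-parity competing antipode has to be converted into a same-parity one. My first attempt would be a cleaner route: pick $i\in I\symDiff J$, apply the no-peerless hypothesis to the odd cubes $\cube{I}{J\symDiff\{i\}}$ and $\cube{I\symDiff\{i\}}{J}$ when those sets are in $M$, and induct on the dimension. If that fails, I would fall back on proving the statement by induction on $n$, restricting to faces exactly as in the proof of \Cref{thm:hyperplane-equals-wenzel}. In that approach the antipodes with $|I\symDiff J|<n$ follow from the induction hypothesis applied to $M_{I,J}$, leaving only the full-cube case to handle directly.
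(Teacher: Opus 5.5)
\paragraph{Gap in Case 1 for $k\ge 3$.} Your Case~2 and the $k=2$ part of Case~1 are correct and agree with the paper. The gap is Case~1 with $|I\symDiff J| = 2k\ge 6$. Routing through \Cref{prop:type+1+D} commits you to showing that $M$ has no peerless antipode in any $2k$-demicube for every $k\ge 3$, and you do not prove this.

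Your sketched chase, applying the hypothesis to the odd cubes $\cube{I}{S}$ or $\cube{I}{T}$, only produces \emph{additional vertices} of $M$ with the parity of $I$ inside $\cube{I}{J}$. A second antipode in the demicube needs more: a complementary pair $S',T'$ with $S'\cap T'=I\cap J$ and $S'\cup T'=I\cup J$, both in $M$ and both of the parity of $I$. Nothing in the sketch supplies the partner of such a vertex. Applying an induction hypothesis to a $(2k-2)$-demicube of some $\cube{A}{J}$ gives antipodes of that smaller cube, not of $\cube{I}{J}$. Already for $k=2$ the analogous chase (\Cref{lem:peerless+cubes+imply+demicubes}) is a delicate case analysis, and you give no mechanism for generalising it.

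The induction-on-$n$ fallback does not close the gap either. It disposes of antipodes with $|I\symDiff J|<n$ but leaves exactly the full-cube case, which is the hard one. The demicube statement you want is true, but only a posteriori, as a consequence of the lemma combined with \Cref{prop:type+1+D}.

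\paragraph{The missing idea.} Use \Cref{prop:type+2+D} instead of \Cref{prop:type+1+D}. Then for $2k$-cubes with $k>2$ you only need to rule out \emph{isolated} antipodes of $\overline{M}$, which takes one step.

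Suppose $*\notin\overline{I}\symDiff\overline{J}$ and $(\cubeVertex{\overline I},\cubeVertex{\overline J})$ is isolated. Then $\cubeVertex{I}$ and $\cubeVertex{J}$ are the only vertices of $P(M)\cap\cube{I}{J}$ of their parity. So the second antipode $(\cubeVertex{I'},\cubeVertex{J'})$ given by the hypothesis has the opposite parity.

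Since $|I\symDiff I'|+|I\symDiff J'|=2k$, one of these, say $|I\symDiff J'|$, is at least $k\ge 3$. The mixed-parity antipode $(\cubeVertex{I},\cubeVertex{J'})$ is therefore not peerless. The other antipode in $\cube{I}{J'}\subseteq\cube{I}{J}$ has a vertex of the parity of $I$. That vertex differs from $\cubeVertex{I}$, since $I$'s only antipodal partner there is $J'$. It also differs from $\cubeVertex{J}$, since $J\notin\cube{I}{J'}$ because $J'\neq J$. This contradicts isolation.

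The $4$-cube condition is handled exactly as you do, via \Cref{lem:peerless+cubes+imply+demicubes} and the projection of \Cref{lem:demi+projection}; this is the paper's proof.
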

\begin{proof}
    We show $\overline{M}$ is an even $\Delta$-matroid via the characterisation in \Cref{prop:type+2+D}: that $\overline{M}$ has no peerless antipode in any $4$-cube and no isolated antipode in any $2k$-cube.
    
    We first show that $\overline{M}$ has no peerless antipode in any $4$-cube $\cube{\overline{I}}{\overline{J}}$ where $|\overline{I} \symDiff \overline{J}| = 4$.
    Note that the restriction $\polyOf{\overline{M}} \cap \cube{\overline{I}}{\overline{J}}$ to a cube is equal to the restriction $\polyOf{\overline{M}} \cap \evencube{\overline{I}}{\overline{J}}$, as all sets of $\overline{M}$ are of even cardinality.
    %First consider the case where $|\overline{I} \symDiff \overline{J}| = 4$ and $* \notin \overline{I} \symDiff \overline{J}$.
    We proceed via a case check depending on whether $*$ is contained in $\overline{I}, \overline{J}$, both or neither.
    If $* \in \overline{I} \cap \overline{J}$, then $\evencube{\overline{I}}{\overline{J}}$ is the translated 4-demicube $\oddcube{I}{J} + \cubeVertex{*}$.
    As $\oddcube{I}{J}$ is a 4-demicube, it follows from \Cref{lem:peerless+cubes+imply+demicubes} that $\polyOf{M} \cap \oddcube{I}{J}$ contains no peerless antipodes, hence the same holds true for $\polyOf{\overline{M}} \cap \evencube{\overline{I}}{\overline{J}}$.
    If $* \notin \overline{I} \cup \overline{J}$, then $\evencube{\overline{I}}{\overline{J}} = \evencube{I}{J}$, and so it follows from \Cref{lem:peerless+cubes+imply+demicubes} that $\polyOf{M} \cap \evencube{I}{J} = \polyOf{\overline{M}} \cap \evencube{\overline{I}}{\overline{J}}$ contains no peerless antipodes.
    Finally if $* \in \overline{I} \symDiff \overline{J}$, then Lemma~\ref{lem:demi+projection} implies that $\pi_*(\evencube{\overline{I}}{\overline{J}}) = \cube{I}{J}$ is a bijection on the vertices, easily seen to preserve antipodality.
    Because $\polyOf{M} \cap \cube{I}{J}$ contains no peerless antipodes, we get that $\polyOf{\overline{M}} \cap \evencube{\overline{I}}{\overline{J}}$ contains no peerless antipodes either.

    We next show that $\overline{M}$ has no isolated antipode in any $2k$-cube $\cube{\overline{I}}{\overline{J}}$ where $|\overline{I} \symDiff \overline{J}| = 2k$ for $k > 2$.
    This is again broken into cases depending on whether $*$ is contained in $\overline{I}, \overline{J}$, both or neither.
    If $* \in \overline{I} \symDiff \overline{J}$, then the proof is identical to the $k=2$ case and $\polyOf{\overline{M}} \cap \evencube{\overline{I}}{\overline{J}}$ contains no peerless antipodes, implying also no isolated antipodes.

    %We finally consider the case where $|\overline{I} \symDiff \overline{J}| \geq 2k$ for $k > 2$ and $* \notin \overline{I} \symDiff \overline{J}$.    
    Suppose $* \in \overline{I} \cap \overline{J}$, and that $(\cubeVertex{\overline{I}},\cubeVertex{\overline{J}})$ is an isolated antipode in $\polyOf{\overline{M}}$.
    This implies that both $I$ and~$J$ must be odd cardinality sets, and that $\polyOf{M} \cap \cube{I}{J}$ must contain no other $\cubeVertex{K}$ where $K$ is a set with odd cardinality.
    As such, the vertices of $\polyOf{M} \cap \cube{I}{J}$ are either $\cubeVertex{I}$ or $\cubeVertex{J}$ or contained in $\evencube{I}{J}$.
    As $\polyOf{M}$ has no peerless antipodes, there exists some other antipode $(\cubeVertex{I'},\cubeVertex{J'}) \in \polyOf{M} \cap \cube{I}{J}$ where $I'$ and $J'$ have even cardinality.
    Note that
    \[
    |I \symDiff I'| + |I \symDiff J'| = |I \symDiff J| = 2k > 4
    \]
    and so without loss of generality $|I \symDiff J'| \geq k \geq 3$.
    Then $(\cubeVertex{I}, \cubeVertex{J'})$ is an antipode of $\polyOf{M} \cap \cube{I}{J'}$ that also cannot be peerless, so there exists another antipode $(\cubeVertex{I''}, \cubeVertex{J''}) \in \polyOf{M} \cap \cube{I}{J'}$ with $I'', J''$ distinct from $I$ and $J'$.
    However, $I$ and $J'$ have opposite parity, and so without loss of generality $I''$ has odd cardinality.
    Moreover, $\cubeVertex{I''}$ is a vertex of $\polyOf{M} \cap \cube{I}{J}$ distinct from $\cubeVertex{I}$ and $\cubeVertex{J}$, which contradicts that $(\cubeVertex{\overline{I}},\cubeVertex{\overline{J}})$ is isolated.
    The case where $* \notin \overline{I} \cap \overline{J}$ is very similar; we just reverse the parity in the previous argument.
    %As $\overline{M}$ has no peerless antipodes in $4$-cubes and no isolated antipodes in any $2k$-cubes, it is an even $\Delta$-matroid by Proposition~\ref{prop:type+2+D}.
    %\aramTodo{Where do we see that there are no isolated antipodes in any $2k$-cube?}
\end{proof}

\begin{proof}[Proof of Theorem~\ref{thm:strong+iff+no+peerless}]
If $M$ is a set system with no peerless antipodes in $k$-cubes for $k \geq 3$, then \Cref{lem:peerless+implies+lift+even+delta} gives that $\overline{M}$ is an even $\Delta$-matroid.
By \Cref{thm:hyperplane-equals-wenzel}, this is equivalent to $M$ being a strong $\Delta$-matroid.

Conversely, let $M$ be a strong $\Delta$-matroid and consider a peerless antipode $(\cubeVertex{I}, \cubeVertex{J})$.
By the hyperplane exchange property, there exists $i,j \in I \symDiff J$ such that $I' = I \symDiff \{i,j\}$ and $J' = J \symDiff \{i,j\}$ are in $M$.
In particular, $(\cubeVertex{I'}, \cubeVertex{J'})$ is an antipode in $\polyOf{M} \cap \cube{I}{J}$.
However, $(\cubeVertex{I}, \cubeVertex{J})$ being peerless implies that $I = J'$ and $J = I'$.
In particular, we have that $|I \symDiff J| \geq 3$, and so $\Delta$ has no peerless antipodes in $k$-cubes for $k \geq 3$.
\end{proof}

As with even $\Delta$-matroids, we can restrict the peerless condition to just the low-dimensional cubes, and insist on the much weaker isolated antipode condition for higher dimensional cubes.
This gives the following equivalent characterisation of strong $\Delta$-matroids.

\begin{proposition}\label{prop:type+2+B}
    Let $M$ be a set system.
    Then $M$ is a strong $\Delta$-matroid if and only if
    \begin{itemize}
        \item it has no peerless antipode in any $3$-cube or $4$-cube, and
        \item it has no isolated antipode in any $k$-cube for $k \geq 5$.
    \end{itemize}
\end{proposition}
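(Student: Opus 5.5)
The forward direction is immediate from \Cref{thm:strong+iff+no+peerless}. A strong $\Delta$-matroid has no peerless antipode in any $k$-cube for $k\geq 3$. In particular it has none in $3$- or $4$-cubes. It also has no isolated antipode in any $k$-cube with $k\geq 5$, since isolated antipodes are peerless.

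For the converse, I plan to mirror the proof of \Cref{lem:peerless+implies+lift+even+delta} and show that $\overline{M}$ is an even $\Delta$-matroid. I will check the two conditions of \Cref{prop:type+2+D} and then conclude via \Cref{thm:hyperplane-equals-wenzel}. (By \Cref{prop:antipode-implies-delta-matroid-2}, $M$ is already a $\Delta$-matroid, so \Cref{thm:hyperplane-equals-wenzel} applies.)

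\textbf{No peerless antipode of $\overline{M}$ in a $4$-cube.} The first half of the proof of \Cref{lem:peerless+implies+lift+even+delta} only uses the absence of peerless antipodes in $3$- and $4$-cubes of $M$, so it carries over verbatim. The cases $*\in\overline I\cap\overline J$ and $*\notin\overline I\cup\overline J$ reduce to $4$-demicubes via \Cref{lem:peerless+cubes+imply+demicubes}. The case $*\in\overline I\symDiff\overline J$ reduces, via the projection of \Cref{lem:demi+projection}, to a $3$-cube of $M$.

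\textbf{No isolated antipode of $\overline{M}$ in a $2k$-cube, $k>2$.} Suppose $(\cubeVertex{\overline I},\cubeVertex{\overline J})$ is such an antipode.

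If $*\in\overline I\symDiff\overline J$, then $|I\symDiff J|=2k-1\geq5$. The projection $\pi_*$ identifies $\polyOf{\overline M}\cap\cube{\overline I}{\overline J}$ bijectively with $\polyOf{M}\cap\cube{I}{J}$. Hence $(\cubeVertex I,\cubeVertex J)$ would be an isolated antipode of $M$ in a cube of dimension at least $5$, which is excluded by hypothesis.

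If $*\notin\overline I\symDiff\overline J$, then $I,J$ have the same parity with $|I\symDiff J|=2k\geq6$. Isolation in $\overline M$ means that every other vertex of $\polyOf M\cap\cube IJ$ has the opposite parity to $I$. Since $(I,J)$ is not isolated in $M$, pick such a vertex $K\neq I,J$. Then $|I\symDiff K|+|K\symDiff J|=2k$, both summands are odd, and so one of them, say $|I\symDiff K|$, is at least $k\geq3$.

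\textbf{Main obstacle.} The cube $\cube IK$ has dimension $m=|I\symDiff K|$, which may be $3$, $4$ or at least $5$. If $m\in\{3,4\}$, the antipode $(I,K)$ is not peerless, so there is a second antipode $(I'',K'')$ in $\cube IK\subseteq\cube IJ$. Its two sets have opposite parities, so one of them has the parity of $I$ and is distinct from $I$; it also differs from $J$, because $J\notin\cube IK$. This contradicts isolation in $\overline{M}$.

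If $m\geq5$, I only know that $(I,K)$ is not isolated, giving some $K'\in\cube IK\setminus\{I,K\}$. If $K'$ has the parity of $I$, we are done as before. Otherwise I would iterate inside $\cube IK$ with $K'$ in place of $K$; the dimension $|I\symDiff K'|$ or $|K'\symDiff K|$ strictly decreases. More precisely, I will induct on the dimension: among all vertices of $\polyOf M\cap\cube IJ$ of the opposite parity, choose $K$ minimising $|I\symDiff K|$ (and symmetrically for $J$).

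With this choice, the isolation hypothesis on large cubes together with minimality forces the distance down to $3$ or $4$. There the peerless hypothesis produces a vertex of the same parity as $I$ inside $\cube IJ$, giving the contradiction. A further subcase is $|I\symDiff K|\leq2$ for every such $K$ (on both sides). Then $I$ and $J$ are far apart yet all intermediate vertices are near one of the ends. Here I would use that $M$ is a $\Delta$-matroid: walking along edges of $\polyOf M\cap\cube IJ$ from $I$ towards $J$, steps of type $e_i$ or $e_i\pm e_j$, must eventually pass a vertex of the parity of $I$ other than $I$ and $J$. Making this minimisation and walk argument airtight is the step I expect to need the most care.
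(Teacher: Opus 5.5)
Your forward direction, your check that $\overline{M}$ has no peerless antipode in a $4$-cube, and your treatment of the case $*\in\overline I\symDiff\overline J$ are all fine. There is a genuine gap in the case $*\notin\overline I\symDiff\overline J$ once $|I\symDiff K|\ge 5$. (Note that $|I\symDiff K|$ is odd, so $4$ never occurs.)

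Minimising $|I\symDiff K|$ over the opposite-parity vertices of $\polyOf M\cap\cube IJ$ does not push you down to distance $3$. Distance $3$ already gives your contradiction, and distances $\ge5$ are never minimal, since non-isolation of $(\cubeVertex I,\cubeVertex K)$ yields a closer opposite-parity vertex inside $\cube IK$. So in the surviving case the minimiser sits at distance $1$, which gives nothing. If you instead minimise only over $K$ with $|I\symDiff K|\ge3$, the non-isolation witness $K'\in\cube IK$ may lie at distance $1$ from $I$. You are then left with $(K',K)$, a pair of the \emph{same} parity at distance $\ge4$, and a second antipode in $\cube{K'}{K}$ may again consist of sets of the parity opposite to $I$. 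So the iteration does not close.

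Your ``further subcase'' (every such $K$ within distance $2$ of an end) is not what the minimisation leaves you with. The walking argument there also cannot work, because it uses only the $\Delta$-matroid property. For example, $\{\emptyset,[2k]\}\cup\{\text{odd subsets of }[2k]\}$ is a $\Delta$-matroid, and a walk $\emptyset,\{1\},\{1,2,3\},\dots,[2k]$ passes only through odd sets. Any contradiction has to come from the $3$- and $4$-cube hypotheses, and you never use them in that step.

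The missing idea is the paper's induction on cube dimension. Prove by induction on $k\ge3$ that $M$ has no peerless antipode in any $k$-cube; \Cref{thm:strong+iff+no+peerless} then finishes the proof. Suppose a $k$-cube contains a peerless antipode but no smaller cube does. Restrict $M$ to that cube to get $M_k$. Then $M_k$ is a $\Delta$-matroid by \Cref{prop:antipode-implies-delta-matroid-2} and is not strong, so $\overline{M_k}$ is not an even $\Delta$-matroid. Now run your lift argument on $M_k$. In the troublesome case you have $3\le|I\symDiff K|<|I\symDiff J|\le k$, so $(\cubeVertex I,\cubeVertex K)$ is not peerless by the induction hypothesis, not merely non-isolated. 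A second antipode of $\cube IK$ then contains a set of the parity of $I$ different from $I$ and $J$ (because $J\notin\cube IK$), contradicting isolation. Your remaining steps go through unchanged for $M_k$, and \Cref{prop:type+2+D} gives the contradiction.
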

\begin{proof}
    By Theorem~\ref{thm:strong+iff+no+peerless}, any strong $\Delta$-matroid automatically satisfies the claim as no peerless antipodes implies no isolated antipodes.
    
    Conversely, assume $M$ is a set system with no peerless antipode in any $3,4$-cube or isolated antipode in any $k$-cube for $k \geq 5$.
    We show by induction on $k$ that $M$ has no peerless antipode in any $k$-cube for $k \geq 3$.
    The base cases on $k=3,4$ follow from assumption.
    Suppose this hypothesis holds for all $3 \leq k' < k$, and for contradiction that there exists some peerless antipode in a $k$-cube.
    Without loss of generality, we may assume this $k$-cube is $\cube{\emptyset}{[k]}$.
    Let $M_k = \set{A \subseteq [k]}{A \in M}$ be the restriction of $M$ to this $k$-cube: by the induction hypothesis, $M_k$ has no peerless antipodes in any $k'$-cube for $k' < k$.
    As $M_k$ has a peerless antipode, it is not a strong $\Delta$-matroid by \Cref{thm:strong+iff+no+peerless} and hence $\overline{M_k}$ is not an even $\Delta$-matroid by \Cref{thm:A}.
    We show that $\overline{M_k}$ has no peerless antipode in any $4$-cube and no isolated antipode in any $2\ell$-cube for $\ell > 2$: as these conditions characterise even $\Delta$-matroids (\Cref{prop:type+2+D}), this gives a contradiction.

    The proof that $\overline{M_k}$ has no peerless antipode in any $4$-cube is identical to the proof of the same claim in \Cref{lem:peerless+implies+lift+even+delta}.
    It remains to show that $\overline{M_k}$ has no isolated antipode in any $2\ell$-cube $\cube{\overline{I}}{\overline{J}}$ where $|\overline{I} \symDiff \overline{J}| = 2\ell$ for $\ell > 2$.
    If $* \in \overline{I} \symDiff \overline{J}$, this is likewise a claim that has been established in the proof of \Cref{lem:peerless+implies+lift+even+delta}.

    Suppose $* \in \overline{I} \cap \overline{J}$, and that $(\cubeVertex{\overline{I}},\cubeVertex{\overline{J}})$ is an isolated antipode in $\polyOf{\overline{M_k}}$.
    This implies that both $I$ and~$J$ must be odd cardinality sets, and that $\polyOf{M_k} \cap \cube{I}{J}$ must contain no other $\cubeVertex{L}$ where $L$ is a set with odd cardinality.
    As such, the vertices of $\polyOf{M_k} \cap \cube{I}{J}$ are either $\cubeVertex{I}$ or $\cubeVertex{J}$ or contained in $\evencube{I}{J}$.
    As $\polyOf{M_k}$ has no isolated antipodes by assumption, there exists some other point $\cubeVertex{K} \in \polyOf{M_k} \cap \cube{I}{J}$ where $K$ has even cardinality.
    Note that
    \[
    |I \symDiff K| + |J \symDiff K| = |I \symDiff J| = 2\ell > 4
    \]
    and so without loss of generality $|I \symDiff K| \geq \ell \geq 3$.
    Note also that $|I \symDiff K| < 2\ell = k$, and so by the induction hypothesis $(\cubeVertex{I}, \cubeVertex{K})$ is not a peerless antipode and there exists some $(\cubeVertex{I'}, \cubeVertex{K'}) \in \polyOf{M_k} \cap \cube{I}{K}$ with $I', K'$ distinct from $I, K$.
    However, $I$ and $K$ have opposite parity, and so without loss of generality $I'$ has odd cardinality.
    Moreover, $\cubeVertex{I'}$ is a vertex of $\polyOf{M_k} \cap \cube{I}{J}$ distinct from $\cubeVertex{I}$ and $\cubeVertex{J}$, which contradicts that $(\cubeVertex{\overline{I}},\cubeVertex{\overline{J}})$ is isolated.
    The case where $* \notin \overline{I} \cap \overline{J}$ is very similar; we just reverse the parity in the previous argument.

    In all cases, $\overline{M_k}$ satisfies the properties of an even $\Delta$-matroid, leading to a contradiction.
    It follows that $M$ has no peerless antipode in any $k$-cube for $k \geq 3$.
    %
    %
    %By \Cref{prop:antipode-implies-delta-matroid-2}, this implies that $M$ is a $\Delta$-matroid.
    %, but is not a strong $\Delta$-matroid.
    % Then there exists some edge $(\cubeVertex{I}, \cubeVertex{J})$ with $|I \symDiff J| = k$ for $k \geq 3$; assume it is an edge of minimal length with this condition.
    % Lemma~\ref{lem:edge-implies-peerless} immediately rules out the case that $k=3, 4$, and so we can assume $k>4$.
    % As $(\cubeVertex{I}, \cubeVertex{J})$ is not isolated, there exists a two-dimensional face $F \subseteq \cube{I}{J}$ of $\polyOf{M}$ that contains $(\cubeVertex{I}, \cubeVertex{J})$ as an edge.
    % As $(\cubeVertex{I}, \cubeVertex{J})$ is peerless by Lemma~\ref{lem:edge-implies-peerless}, the other edges in $F$ are necessarily shorter, and so by our assumption all parallel to some $e_i$ or $e_i \pm e_j$.
    % However, it takes at least $\ceil{ k/2 }$ affinely independent edges of the form $e_i$ and $e_i \pm e_j$ to walk from $\cubeVertex{I}$ to $\cubeVertex{J}$, implying that $\dim(F) \geq \ceil{ k/2} > 2$, giving a contradiction.
\end{proof}

\section{Peerless antipodes from embedding equations}\label{s:peerlessantipodegrassmann}

In this section, we show how the peerless antipode characterisation in \Cref{thm:strong+iff+no+peerless} can be viewed as a `combinatorialisation' of equations cutting out the orthogonal Grassmannian $\mathrm{OGr}(n,2n+1)$.
To do so, we will recap a number of ideas and results from \cite{CDFS1}.
This recap will be light touch to hide some (if not all) of the ugly technicalities: we encourage the interested reader to go to the original paper for further details. 

We briefly emphasise how our present work differs from \cite{CDFS1}.
In this section we use a basis of the equations cutting out the orthogonal Grassmannian, calculated in \cite[Section A.2.1]{CDFS1} from the results of \cite{Lichtenstein:1982}.
In \cite{CDFS1} we take this basis and produce a spanning set of equations with small support \cite[Theorem 4.8 and Section A.2.2]{CDFS1} to tropicalise, whilst in this section we tropicalise the large support basis.
This dichotomy is analogous to the one in the type~A setting between the straightening relations (basis) and the Pl\"{u}cker equations (minimal support) for the Grassmannian.

\subsection{Peerless antipode equations}

We state our combinatorial equations in the language of tropical geometry.
For context and a detailed treatment of tropical algebra and geometry, see \cite{MaclaganSturmfels}.

Let $\BB=(\{0,1\},\oplus,\odot)$ be the Boolean semifield,
where $0$ is the zero, $1$ the multiplicative identity, and $1\oplus 1=1$.
For those familiar with the theory of matroids over partial hyperstructures \cite{BakerBowler,JinKimO,KimSp}, we could instead set up our equations over the Krasner hyperfield.

Our equations are in the finite set of indeterminates $X=\set{x_S}{S\subseteq[n]}$.
Define $\BB[X]$ to be the 
%monoid semiring with coefficients in~$\BB$ of the free commutative monoid $\mathbb N^X=\Hom_{\mathrm{Set}}(X,\mathbb N)$ on the generators $X$.
%Explicitly, for $a\in\mathbb N^X$, let $X^{\odot a}$ denote the formal monomial 
%\raisebox{0pt}[0pt][0pt]{$\bigodot_{x\in X}\underbrace{x\odot\cdots\odot x}_{a(x)\mbox{\scriptsize\ times}}$}. % \raisebox so that the underbrace can sit in the space of the displayed equation
set of formal sums
\[\bigoplus_{a\in A}X^{\odot a}\]
for some finite subset $A\subset\mathbb N^X$,
where $X^{\odot a}$ is the formal monomial 
\raisebox{0pt}[0pt][0pt]{$\bigodot_{x_S\in X}\overbrace{x_S\odot\cdots\odot x_S}^{a(x_S)\mbox{\scriptsize\ times}}$}. % \raisebox so that the underbrace can sit in the space of the displayed equation
%Addition in $\BB[X]$ corresponds to union of the sets~$A$, and multiplication to Minkowski sum.
In fact, the elements of~$\BB[X]$ we use in this paper are all square-free, that is, $a(x_S)\in\{0,1\}$.
(In \cite{CDFS1} we presented a semiring structure on~$\BB[X]$.)

When we call an element $f\in\BB[X]$ a \defn{tropical equation}, 
we are thinking of it as a condition that may or may not be satisfied by a tuple $p\in\BB^X$.
The tropical equation $f=\bigoplus_{a\in A}X^{\odot a}$ is \defn{satisfied} by~$p$ 
if the number of $a\in A$ such that $p^{\odot a}=1$ -- i.e.\ such that $p(x_S)=1$ for all $x_S$ with $a(x_S)>0$ -- 
is not exactly~1.

The \defn{tropicalisation} map $\trop:\CC[X]\to\BB[X]$
is defined by $\trop(\sum_{a\in A} z_a X^a) = \bigoplus_{a\in A} X^{\odot a}$ 
if $z_a\in\CC$ is nonzero for all $a\in A$.
We warn the reader that tropicalisation is not a homomorphism of additive semigroups, on account of cancellations in~$\CC[X]$.
But tropicalisation is compatible with the property of satisfying an equation, in the following sense.
Suppose $F\in\CC[X]$ is a complex polynomial such that $F(P)=0$ at the point $P\in\CC^X$.
Let $f=\trop(F)$, and let $p\in\BB^X$ be the support of~$P$, i.e.\ $p(x_S)=1$ if and only if $P(x_S)\ne0$.
Then the tropical equation $f$ is satisfied by~$p$.

%When we present tropical exchange equations later in this section,
%these will come with lower bounds on the sizes of sets involved.
%Comparing to the equations over $\CC[X]$ in \Cref{s:eqsfromGrass}\aram{ref}\alex{This sentence refers to a section of \cite{CDFS1} which was not copied over.}, we see that these bounds arise from ``sporadic'' cancellations of terms when the sets are small, making the tropical equation not agree with the tropicalisation.

Set systems $M$ on~$[n]$ are in bijection with tuples $\nu_M\in\BB^X$
by the rule that $\nu_M(x_S)=1$ if and only if $S\in M$.
In this way we may make sense of saying that a set system $M$ satisfies a tropical equation $f\in\BB[X]$:
we mean that $\nu_M$ satisfies~$f$.

To each $I, J$ with $|I \symDiff J| \geq 3$, we can associate a tropical equation obtained by summing over the antipodes of the cube $\cube{I}{J}$:
\begin{definition} \label{defn:peerless+antipode+equations}
    The \defn{peerless antipode equations} on $[n]$ are the the collection of tropical equations 
    \begin{align} \label{eq:peerless+antipode}
        \cG_n \coloneqq& \set{g_{I,J}}{I, J \subseteq [n],\, |I\symDiff J| \ge 3} \nonumber \\
        \text{where} \quad g_{I,J} =& \bigoplus_{\substack{(\cubeVertex{S},\cubeVertex{T}) \\\text{ antipodes in } \cube{I}{J}}} x_{S} \odot x_{T} % \, , \quad \cA(\cube{I}{J}) = \{(S,T) \st (\cubeVertex{S},\cubeVertex{T}) \text{ antipode of } \cube{I}{J}\} \\
    \end{align} 
\end{definition}
In the same way that each cube can be expressed as $\cube{I}{J}$ for any pair of antipodes $(\cubeVertex{I},\cubeVertex{J})$, the equations $g_{I,J}$ and $g_{S,T}$ are equal when $I \cap J = S \cap T$ and $I \cup J = S \cup T$.
In fact, any pair of peerless antipode equations are either equal or have disjoint monomial supports.
We consider $\cG_n$ as a set rather than a multiset, and so each equation only appears once.

\begin{corollary}
    Let $M$ be a set system on $[n]$.
    Then $M$ is a strong $\Delta$-matroid if and only if $\nu_M$ satisfies the peerless antipode equations $\cG_n$.
\end{corollary}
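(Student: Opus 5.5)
This corollary follows by unwinding the definitions and invoking \Cref{thm:strong+iff+no+peerless}. Fix a cube $\cube{I}{J}$ with $|I \symDiff J| \ge 3$; we analyse when $\nu_M$ satisfies $g_{I,J}$. The monomials of $g_{I,J}$ are $x_S \odot x_T$ over the antipodes $(\cubeVertex{S},\cubeVertex{T})$ of $\cube{I}{J}$. Here we regard $(S,T)$ and $(T,S)$ as giving the same square-free monomial, so each unordered antipodal pair contributes exactly one term. The monomial evaluates to $1$ at $\nu_M$ exactly when both $S$ and $T$ lie in $M$. By the definition of satisfaction, $\nu_M$ satisfies $g_{I,J}$ if and only if the number of unordered antipodal pairs of $\cube{I}{J}$ with both ends in $M$ is not exactly one.

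Next we compare this with \Cref{def:peerless+isolated}. The cube $\cube{I}{J}$ contains a peerless antipode of $M$ precisely when exactly one unordered antipodal pair of $\cube{I}{J}$ has both ends in $M$. Indeed, if $(S,T)$ is such a unique pair, then $(S,T)$ is peerless by definition. Conversely, a peerless antipode $(I',J')$ determines the cube $\cube{I'}{J'}=\cube{I}{J}$, and peerlessness says no other pair is present. Hence $\nu_M$ fails $g_{I,J}$ if and only if $M$ has a peerless antipode in the cube $\cube{I}{J}$.

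Finally, every $k$-cube with $k \ge 3$ is of the form $\cube{I}{J}$ for some $g_{I,J} \in \cG_n$. Since equal cubes give equal equations, the set $\cG_n$ is indexed by exactly these cubes. So $\nu_M$ satisfies all of $\cG_n$ if and only if $M$ has no peerless antipode in any $k$-cube with $k \ge 3$. By \Cref{thm:strong+iff+no+peerless}, this holds if and only if $M$ is a strong $\Delta$-matroid.

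The only point needing care is the ordered-versus-unordered bookkeeping of antipodes in the definition of $g_{I,J}$. The sum in \eqref{eq:peerless+antipode} formally ranges over ordered pairs, but since $\BB[X]$ consists of formal sums indexed by a \emph{set} $A$ of exponent vectors, $x_S \odot x_T$ and $x_T \odot x_S$ are one and the same term. Hence "exactly one monomial equal to $1$" matches "exactly one antipode up to swapping", which is precisely peerlessness. The case $S = T$ never arises because $|I \symDiff J| \ge 3$. Beyond this check, there is no real obstacle; all the substance is in \Cref{thm:strong+iff+no+peerless}.
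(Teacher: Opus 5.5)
Your proof is correct and follows the paper's argument: you read satisfaction of $g_{I,J}$ as ``not exactly one monomial evaluates to $1$'', identify that with the absence of a peerless antipode in $\cube{I}{J}$, and then invoke \Cref{thm:strong+iff+no+peerless}. Your bookkeeping on ordered versus unordered antipodes, which the paper leaves implicit, is handled correctly.
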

\begin{proof}
    By \Cref{thm:strong+iff+no+peerless}, $M$ is not a strong $\Delta$-matroid if and only if there exists some $\cube{I}{J}$ such that $P(M) \cap \cube{I}{J}$ has a single antipode.
    Equivalently, there is a unique monomial in $g_{I,J}$ which equals $1$ when evaluated on $\nu_M$.
    This is precisely when $\nu_M$ does not satisfy $g_{I,J}$.
\end{proof}

\subsection{Quadrics defining $\mathrm{OGr}(n,2n+1)$}

The orthogonal Grassmannian $\mathrm{OGr}(n,2n+1)$ is the moduli space of isotropic subspaces of $\CC^n$ of real dimension $n$.
It can be considered as a homogeneous space $\bbG /\bbP$ where $\bbG = \mathrm{O}(2n+1)$ is the complex orthogonal group and $\bbP$ is the stabiliser of a fixed choice of maximal isotropic subspace.
Representation theory gives a uniform procedure for defining an embedding of $\bbG/\bbP$ into a projectivised $\bbG$-representation $\proj(V_\lambda)$, where $V_\lambda$ is related to the datum for the parabolic $\bbP$.
Furthermore, the ideal cutting out the embedding is generated by quadrics and there is a recipe for how to obtain these quadrics~\cite[Theorem~1]{Lichtenstein:1982}.
For us, this $\bbG$-representation is the \emph{spin representation} $\Sp$, a $2^n$-dimensional representation whose weights are in bijection with vertices of the $n$-cube.
As such, we can consider $\mathrm{OGr}(n,2n+1) \subset \PP^{2^n -1}$ as a projective variety whose defining ideal $\cI_n$ is generated by some quadrics in the polynomial ring $\CC[x_I \colon I \subseteq [n]]$.

In \cite{CDFS1}, the authors gave an explicit description of $\cI_n$ and the quadrics generating it, which we now recall with some simplifications.
To state these equations, we introduce a different indexing for hypercubes.
Given a set $I$, write $I^c = [n] \setminus I$ for its complement.
For disjoint $N, L \subseteq [n]$, define the cube
\[
Q_{N,L} = \conv\set{\cubeVertex{K}}{N \subset K, L \cap K = \emptyset}  \, .
\]
This is a cube of dimension $n - |N| - |L|$.
The antipodes of $Q_{N,L}$ are exactly the pairs of vertices of the form $(\cubeVertex{K}, \cubeVertex{\overline{K}})$ where $\overline{K} := (K^c \setminus L) \cup N$.
We can relate $Q_{N,L}$ to our previous cube notation by observing $\cube{I}{J} = Q_{I \cap J, (I \cup J)^c}$.
However, unlike $\cube{I}{J}$ which is equal to $\cube{I'}{J'}$ for any antipode $(I',J')$ in $\cube{I}{J}$, 
the cube $Q_{N,L}$ uniquely determines $N$ and~$L$.
% This is the set of subsets indexing a face of the $n$-cube $Q_n$; we'll generally speak of it as a face or a (sub)cube itself.
% The full $n$-cube $Q_n$ is given by $Q_{\emptyset,\emptyset}$.

% %We now reindex the quadratic embedding equation $F_{I,J}^{(B)}$ in this notation.
% Any pair $(I,J)$ of subsets of~$[n]$ is an antipode in a unique cube, namely $Q_{N,L}$, where $N = I \cap J$ and $L = (I \cup J)^c$ (this may be a $0$-cube).
% %Every pair $(I,J)$ of subsets of~$[n]$ is an antipode of a unique cube, where we think of $(I,I)$ as an antipode of the $0$-cube $\{I\}$,
% So we will call such a pair of subsets an \defn{ordered antipode}.
% Observe that we can write $J = (I^c \setminus L) \cup N$.
% As such, for any $K \in Q_{N,L}$ we let $\overline{K}_{N,L} := (K^c \setminus L) \cup N$ denote the set such that $(K, \overline{K}_{N,L})$ is the unique antipode in $Q_{N,L}$ containing $K$.
% If $N$ and $L$ are clear from context we will drop them and simplify $\overline{K}_{N,L}$ to $\overline{K}$. Letting $S = (N \cup L)^c$, we have $\overline{K}_{N,L} = K \symDiff S$. 
Finally, we let $\cM_{N,L}$ denote the set
\begin{equation}\label{eq:M_NL}
    \cM_{N,L} \coloneqq \set{4k + 2 \ceil{\frac{n}{2}} - |N| - |L| - \delta}{k\in\ZZ \, , \, \delta \in \{0,1\}} \setminus \Big\{ m-2,\,m-1,\,m,\,m+1,\,m+2\Big\}
\end{equation}
where $m= n -|N|-|L|$.
With this, we define
\begin{align*}
    \cP_n &= \set{p^M_{N,L}}{M,N,L \subseteq [n] \text{ pairwise disjoint} \, , \, 2|M| \in \cM_{N,L}} \subseteq \CC[x_I : I \subseteq [n]] \\
    \text{ where } p^M_{N,L} &= \sum_{\substack{(\cubeVertex{K},\cubeVertex{\overline{K}}) \\\text{ antipodes in } Q_{N,L}}} s(K,M,N,L)x_{K}x_{{\overline{K}}} \, , 
    \end{align*}
    where $s(K,M,N,L)\in \{\pm 1\}$ is some sign depending on the choice of $K,M,N,L$: the precise definition is given in \cite[Theorem A.10]{CDFS1}, but the support of $p_{N,L}^M$ is sufficient for our purposes.
    By explicitly excluding $\{m-2,\ldots,m+2\}$ from $\cM_{N,L}$, it turns out that no equation in $\cP_n$ is supported on antipodes of $0$-, $1$- and $2$-cubes, though this is not a priori obvious.

%AF TODO: approve this. Check whether clear enough what this section does beyond the last paper.
\begin{theorem}[{\cite[Theorem A.10]{CDFS1}}] \label{t:BspinEqs}
    Let $\cI_n$ be the ideal cutting out the orthogonal Grassmannian $\OGr(n,2n+1) \subset \PP^{2^n-1}$.
    Then $\cP_n$ generates $\cI_n$, and moreover forms a basis for the degree two graded piece of $\cI_n$.
    % %The set of equations 
    % \begin{align*}
    % \cI_n &= \ideal{p^M_{N,L}}{M,N,L \text{ pairwise disjoint} \, , \, 2|M| \in \cM_{N,L}} \subseteq \CC[x_I : I \subseteq [n]] \\
    % \text{ where } p^M_{N,L} &= \sum_{\substack{(\cubeVertex{K},\cubeVertex{\overline{K}}) \\\text{ antipodes in } Q_{N,L}}} s(K,M,N,L)x_{K}x_{{\overline{K}}} \, , \quad s(K,M,N,L) \in \{+1,-1\} \, .
    % \end{align*}
    % Moreover, this generating set forms a basis for the degree two graded piece of $\cI_n$.
\end{theorem}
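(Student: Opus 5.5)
The plan is to derive \Cref{t:BspinEqs} from representation theory, using Lichtenstein's theorem \cite[Theorem~1]{Lichtenstein:1982}: the ideal $\cI_n$ is generated by its degree-two part, and that part is the kernel of the multiplication map $\mathrm{Sym}^2\Sp \to V_{2\lambda}$, where $\lambda$ is the highest weight of the spin representation $\Sp$. Equivalently, $(\cI_n)_2$ is the $\bbG$-stable complement of the Cartan component $V_{2\lambda}$ inside $\mathrm{Sym}^2 \Sp$. Once this is set up, generation is automatic, and the whole content of the theorem is that the explicit polynomials $p^M_{N,L}$ form a basis of this complement.

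\textbf{Step 1: decompose $\Sp\otimes\Sp$.} Let $V=\CC^{2n+1}$. For type $B_n$ we have the standard decomposition
\[
\Sp\otimes\Sp \cong \bigoplus_{j=0}^n \Lambda^j V,
\]
with $V_{2\lambda}=\Lambda^n V$. The summand $\Lambda^j V$ lies in $\mathrm{Sym}^2\Sp$ or in $\Lambda^2\Sp$ according to a residue condition on $n-j$ modulo $4$. Hence
\[
(\cI_n)_2 = \bigoplus \{\Lambda^j V : j<n,\ \Lambda^j V\subseteq \mathrm{Sym}^2\Sp\}.
\]
Each summand is realised concretely by a $\bbG$-equivariant bilinear map $\beta_j\colon \Sp\otimes\Sp\to\Lambda^j V$. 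Identify $\Sp$ with $\Lambda^\bullet\CC^n$, with basis $x_I$, and define $\beta_j$ through the Clifford action of $V$ on $\Sp$, pairing $x$ with $c\cdot y$ for $c$ a Clifford monomial of degree $j$.

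\textbf{Step 2: compute weight components.} Take the weight basis of $\Lambda^jV$ given by wedges of $e_a$ ($a\in N$), $f_b$ ($b\in L$), and hyperbolic pairs $e_c\wedge f_c$ or $e_0$ indexed by $c\in M$, with $M,N,L$ pairwise disjoint. The corresponding coordinate of $\beta_j(x,x)$ is then a quadric whose monomials all have the weight of $\cubeVertex{N}-\cubeVertex{L}$ (suitably shifted). These monomials are exactly the $x_Kx_{\overline K}$ with $(\cubeVertex{K},\cubeVertex{\overline K})$ an antipode of $Q_{N,L}$, and the coefficients are the signs $s(K,M,N,L)$ coming from reordering Clifford generators.

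The degree is $j=|N|+|L|+2|M|$ or $j=|N|+|L|+2|M|\pm1$, the latter when $e_0$ appears. Rewriting the conditions ``$j\equiv$ the $\mathrm{Sym}^2$ residue mod $4$'' and ``$j\neq n$'' in terms of $2|M|$ should produce exactly the set $\cM_{N,L}$ in \eqref{eq:M_NL}. In particular, the excluded window $\{m-2,\dots,m+2\}$ should correspond to the top component $\Lambda^nV$ together with the coordinates that vanish identically.

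\textbf{Step 3: the basis claim.}
\begin{itemize}
\item \emph{Spanning.} The $p^M_{N,L}$ are weight-basis coordinates of the $\beta_j$ with $j<n$ landing in $\mathrm{Sym}^2$, so they span $(\cI_n)_2$.
\item \emph{Independence.} Different $(N,L)$ give disjoint monomial supports. For a fixed $(N,L)$, the polynomials with different $|M|$ lie in non-isomorphic irreducible summands, and those with equal $|M|$ are distinct weight-basis coordinates of a single $\Lambda^jV$; Schur's lemma then gives independence.
\item \emph{Dimension check.} As a safeguard, verify $|\cP_n|=\dim\mathrm{Sym}^2\Sp-\dim\Lambda^nV$, which is a binomial-sum identity.
\end{itemize}

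\textbf{Main obstacle.} The hardest part is Step 2: making the Clifford-algebra bookkeeping precise enough to confirm that the index condition $2|M|\in\cM_{N,L}$ is exactly right. One must check that no $p^M_{N,L}$ with $2|M|\in\cM_{N,L}$ vanishes identically through sign cancellation, and that the boundary cases (small $|Q_{N,L}|$, the $e_0$ factor) match the excluded window. I would first verify this by hand for $n\le 4$ and then argue the general pattern by induction on $n-|N|-|L|$.
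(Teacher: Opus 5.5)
\textbf{Overall.} Your strategy is sound and is the natural one. The paper imports this result from CDFS1, where the basis is computed from Lichtenstein's quadrics for the spin representation. Quadratic generation plus the decomposition of $\mathrm{Sym}^2\Sp$ into summands $\Lambda^jV$ gives generation, spanning and independence, and your count $2^{n-1}(2^n+1)-\binom{2n+1}{n}$ is correct.

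\textbf{The gap is in Step~2, at exactly the point you flag.} With your normalisation (summands $\Lambda^jV$ with $j<n$, and a factor $e_0$ either present or absent), the index conditions do \emph{not} reduce to $2|M|\in\cM_{N,L}$. Your family is not even indexed by triples $(M,N,L)$.

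\emph{Example $n=4$, $N=L=\emptyset$.} Here $(\cI_4)_2\cong\Lambda^0V\oplus\Lambda^1V$. Your two zero-weight coordinates come from $1\in\Lambda^0V$ and from $e_0\in\Lambda^1V$, and both carry the label $M=\emptyset$. But $\cM_{\emptyset,\emptyset}\cap\{0,2,4,6,8\}=\{0,8\}$, so the paper's labels are $M=\emptyset$ and $M=[4]$.

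\emph{Example $n=5$} (compare Example~\ref{ex:B5}). You get $M=\emptyset$ (via $e_0$) together with the singletons, whereas $\cP_5$ uses the singletons and $[5]$.

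In general, on every cube of even dimension $m\ge4$ two of your quadrics share the same $M$. Relatedly, the window $\{m-2,\dots,m+2\}$ does not encode ``identically vanishing coordinates''; there are none.

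\textbf{Fix: index by even degree.}
\begin{enumerate}
\item For every $0\le j\le 2n+1$, the summand $\Lambda^jV\cong\Lambda^{2n+1-j}V$ lies in $\mathrm{Sym}^2\Sp$ if and only if $j\equiv n$ or $n+1 \pmod 4$. Each $\Lambda^jV$ with $j\le n$ is isomorphic to exactly one $\Lambda^{j'}V$ with $j'\in\{0,2,\dots,2n\}$.
\item The Hodge star sends $e_N\wedge f_L\wedge\bigwedge_{c\in M}(e_c\wedge f_c)\wedge e_0^{\delta}$ to a multiple of the same monomial with $M$ replaced by its complement in $[n]\setminus(N\cup L)$ and $\delta$ replaced by $1-\delta$. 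So passing to even $j'$ merely relabels the same quadrics, up to scalars.
\item Write $j'=|N|+|L|+2|M|+\delta$ with $j'$ even. Then $\delta\equiv|N|+|L| \pmod 2$ is forced, and $|M|$ determines $j'$, so your Schur argument applies verbatim.
\item The symmetric condition becomes $j'\equiv 2\ceil{n/2}\pmod 4$. The Cartan summand is the even element of $\{n,n+1\}$, and removing $\{m-2,\dots,m+2\}$ removes exactly that summand. Together these give precisely $2|M|\in\cM_{N,L}$.
\end{enumerate}

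\textbf{Non-vanishing needs no hand-checking or induction.} Since $\beta_{j'}$ is symmetric, the coefficient of $x_Kx_{\bar K}$ (for $K\ne\bar K$) is $2\langle e_{\bar K},\omega\cdot e_K\rangle$. Take the factor for $c\in M$ to be the antisymmetrised $\tfrac12(e_cf_c-f_ce_c)$, which acts on each basis vector by $\pm\tfrac12$. Do not use the Clifford product $e_cf_c$: its scalar part brings in lower-degree summands, and in the even convention possibly the Cartan summand. With the antisymmetrised factor, $\omega\cdot e_K$ is a nonzero multiple of a basis vector for every $K$ in the cube. So each coordinate is, after rescaling, a $\pm1$ combination of all antipodes of the cube, which is exactly the shape $p^M_{N,L}$ should have.
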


This proves the first part of \Cref{thm:B}.
The second part follows from the following proposition.

\begin{proposition} \label{prop:tropicalisation}
    The tropicalisation of $\cP_n$ is equal to the peerless antipode equations $\cG_n$.
    %The tropicalisation of the basis $\{p^M_{N,L}\}$ in \cref{t:BspinEqs} is equal to the set of peerless antipode equations for $k$-cubes $Q$ with $k \geq 3$.
\end{proposition}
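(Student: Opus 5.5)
We need to show two inclusions: each $\trop(p^M_{N,L})$ equals some $g_{I,J}$ with $|I\symDiff J|\ge 3$, and every $g_{I,J}\in\cG_n$ arises this way.

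\emph{Step 1: the tropicalisation of each $p^M_{N,L}$.} The monomials $x_Kx_{\overline{K}}$, as $(\cubeVertex{K},\cubeVertex{\overline{K}})$ ranges over antipodes of $Q_{N,L}$, are pairwise distinct up to the identification of the ordered antipode $(K,\overline K)$ with $(\overline K,K)$. The sum in $p^M_{N,L}$ runs over ordered antipodes, so each monomial $x_Kx_{\overline K}$ appears twice, with coefficient $s(K,M,N,L)+s(\overline K,M,N,L)$. I would use the explicit sign formula from \cite[Theorem A.10]{CDFS1} to show that, whenever $2|M|\in\cM_{N,L}$, these two signs agree, so the coefficient is $\pm2\neq0$. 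The same holds if the convention is unordered antipodes, where each coefficient is simply $\pm1$. Since distinct antipodes give distinct monomials and no cancellation occurs, $\trop(p^M_{N,L})=\bigoplus x_K\odot x_{\overline K}$ over all antipodes of $Q_{N,L}=\cube{I}{J}$, which is exactly $g_{I,J}$ for any antipode $(I,J)$. The claim that the coefficients do not vanish (sign compatibility under $K\leftrightarrow\overline K$) is the step I expect to be the main obstacle. If $s$ is not symmetric for some $M$, I would instead show that such $M$ are exactly those excluded by the parity conditions defining $\cM_{N,L}$, where the quadric would vanish identically.

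\emph{Step 2: dimension bounds.} Let $m=n-|N|-|L|$ be the dimension of $Q_{N,L}$. The paper notes that excluding $\{m-2,\dots,m+2\}$ means no element of $\cP_n$ is supported on cubes of dimension at most $2$. I would verify this directly: if $m\le2$, then every element of $\{4k+2\lceil n/2\rceil-|N|-|L|-\delta\}$ that is $\le 2(n-|N|-|L|)$ and even, hence attainable as $2|M|$ with $M$ disjoint from $N\cup L$, lies in $[m-2,m+2]$. Thus the image of $\trop$ lies in $\cG_n$.

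\emph{Step 3: surjectivity.} Given $g_{I,J}$ with $m=|I\symDiff J|\ge3$, set $N=I\cap J$ and $L=(I\cup J)^c$. We need some $M\subseteq (N\cup L)^c$ with $2|M|\in\cM_{N,L}$, i.e.\ $0\le 2|M|\le 2m$. The set $\{4k+c-\delta\}$ with $c=2\lceil n/2\rceil-|N|-|L|$ hits every residue class mod $4$ in two consecutive values, so it contains an even number in each window of length $4$. Removing $[m-2,m+2]$ still leaves an even admissible value in $[0,2m]$ once $m\ge3$: for example, $[m+3,2m]$ together with $[0,m-3]$ has total length at least $m-1$, so we check small cases $m=3,4,5$ by hand, and for larger $m$ one of the two windows has length at least $4$. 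Choosing such an $M$ gives $\trop(p^M_{N,L})=g_{I,J}$ by Step 1, completing the equality.
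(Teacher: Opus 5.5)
Your proposal is correct and follows essentially the paper's argument: identify $\trop(p^M_{N,L})$ with the antipode sum of $Q_{N,L}=\cube{I}{J}$, then use the arithmetic of $\cM_{N,L}$ to show an admissible $M$ exists exactly when $m\ge 3$. The paper takes the full support of $p^M_{N,L}$ as given from \cite{CDFS1}, so your sign-symmetry discussion in Step~1 is extra caution rather than a needed step. Your window argument replaces the paper's mod-$4$ case table. In the hand check for $m=4$, the even values in $[0,1]\cup[7,8]$ are only $\equiv 0 \bmod 4$; this suffices because $m \bmod 4$ always lies among the two admissible residues, so $M=\emptyset$ works.
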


\begin{proof} 
    Both the peerless antipode equations and the equations in $\cP_n$ are supported on antipodes of subcubes of the ambient $n$-cube.
    There is a peerless antipode equation for every $k$-cube contained in the ambient cube with $k \geq 3$, and none for $k \leq 2$.
    If we can show the same for the equations in $\cP_n$, we are done.
    
    Recall that any $m$-cube can be expressed as $Q_{N,L}$ where $N,L$ are disjoint and $m = n-|N|+|L|$.
    Suppose $m \leq 2$. Then for any set $M \subseteq [n] \setminus (N \cup L)$, it follows that $0 \leq |M| \leq m \leq 2$.
    As $m-2, \dots, m+2 \notin \cM_{N,L}$, it follows that $2|M| \notin \cM_{N,L}$, and hence there is no equation in $\cP_n$ supported on the antipodes of $Q_{N,L}$.
    
    Now suppose $m \geq 3$.
    Verifying there exists some $M \subseteq [n] \setminus (N \cup L)$ with $2|M| \in \cM_{N,L}$ is a case check depending on $m \bmod 4$ and $n \bmod 2$.
    Note that
    \[
    4k + 2 \ceil{\frac{n}{2}} - |N| - |L| - \delta =
    \begin{cases}
        m + 4k - \delta & n \equiv 0 \bmod 2 \\
        m + 1 + 4k - \delta & n \equiv 1 \bmod 2
    \end{cases}
    \]
    Hence we get the following cases:
    % \begin{align*}
    %     m \equiv 0 \bmod 4 \, , \, n &\equiv 0 \bmod 2 & &\Rightarrow & 0 &= m - 4\left\lfloor\frac{m}{4}\right\rfloor \in \cM_{N,L} \\
    %     n &\equiv 1 \bmod 2 & &\Rightarrow & 0 &= m + 1 - 4\left\lfloor\frac{m}{4}\right\rfloor -1 \in \cM_{N,L} \\
    %     %
    %     m \equiv 1 \bmod 4 \, , \, n &\equiv 0 \bmod 2 & &\Rightarrow & 0 &= m - 4\left\lfloor\frac{m}{4}\right\rfloor -1 \in \cM_{N,L} \\
    %     n &\equiv 1 \bmod 2 & &\Rightarrow & 2 &= m + 1 - 4\left\lfloor\frac{m}{4}\right\rfloor \in \cM_{N,L} \\
    %     %
    %     m \equiv 2 \bmod 4 \, , \, n &\equiv 0 \bmod 2 & &\Rightarrow & 2 &= m - 4\left\lfloor\frac{m}{4}\right\rfloor \in \cM_{N,L} \\
    %     n &\equiv 1 \bmod 2 & &\Rightarrow & 2 &= m + 1 - 4\left\lfloor\frac{m}{4}\right\rfloor -1 \in \cM_{N,L} \\
    %     %
    %     m \equiv 3 \bmod 4 \, , \, n &\equiv 0 \bmod 2 & &\Rightarrow & 2m &= m + 4\left\lceil\frac{m}{4}\right\rceil + 1 \in \cM_{N,L} \\
    %     n &\equiv 1 \bmod 2 & &\Rightarrow & 0 &= m + 1 - 4\left\lceil\frac{m}{4}\right\rceil \in \cM_{N,L}
    % \end{align*}
    \begin{align*}
        m \equiv 0 \bmod 4 \quad &\Rightarrow \quad 
        \begin{cases}
            0 = m - 4\left\lfloor\frac{m}{4}\right\rfloor \in \cM_{N,L} & n \equiv 0 \bmod 2 \\
            0 = m + 1 - 4\left\lfloor\frac{m}{4}\right\rfloor -1 \in \cM_{N,L} & n \equiv 1 \bmod 2
        \end{cases} \\
        m \equiv 1 \bmod 4 \quad &\Rightarrow  \quad
        \begin{cases}
            0 = m - 4\left\lfloor\frac{m}{4}\right\rfloor -1 \in \cM_{N,L} & n \equiv 0 \bmod 2 \\
            2 = m + 1 - 4\left\lfloor\frac{m}{4}\right\rfloor \in \cM_{N,L} & n \equiv 1 \bmod 2
        \end{cases} \\
        m \equiv 2 \bmod 4 \quad &\Rightarrow \quad
        \begin{cases}
            2 = m - 4\left\lfloor\frac{m}{4}\right\rfloor \in \cM_{N,L} & n \equiv 0 \bmod 2 \\
            2 = m + 1 - 4\left\lfloor\frac{m}{4}\right\rfloor -1 \in \cM_{N,L} & n \equiv 1 \bmod 2
        \end{cases} \\
        m \equiv 3 \bmod 4 \quad &\Rightarrow \quad 
        \begin{cases}
            2m = m + 4\left\lceil\frac{m}{4}\right\rceil + 1 \in \cM_{N,L} & n \equiv 0 \bmod 2 \\
            0 = m + 1 - 4\left\lceil\frac{m}{4}\right\rceil \in \cM_{N,L} & n \equiv 1 \bmod 2
        \end{cases}
    \end{align*}
    Note that in every case, the claimed value in $\cM_{N,L}$ avoids $\{m-2, \dots, m+2\}$ as $m \geq 3$.
    In particular, there exists a set $M$ disjoint from $N$ and $L$ such that $2|M| \in \cM_{N,L}$.
    Thus $p^M_{N,L}$ is an element of $\cP_n$ that is supported on the antipodes of $Q_{N,L}$.
    % It is easy to see that the tropicalisation of $p^M_{N,L}$ does not depend on $M$ and
    % \[
    % \trop( p^M_{N,L}) =  \bigoplus_{I\in Q_{N,L}} x_I \odot x_{I^{c,T}_\face}.
    % \]
    % Hence every peerless antipode equation is contained in the tropicalisation of the equations given in Theorem \ref{t:BspinEqs}.
    % Further, since the tropicalisation is independent of $M$, each equation $p^M_{N,L}$ tropicalises to the peerless antipode equation on the subcube $Q_{N,L}$.
\end{proof}

% Combining Theorem \ref{thm:strong+iff+no+peerless}, which states that a subset of the cube is a $\Delta$ matroid if and only if it has no peerless antipodes, with the fact that the basis of Grassmann embedding equations $\{p^M_{N,L}\}$ tropicalise to the peerless antipode equations shows that a $M$ is a matroid if and only if the $\nu_M$ is a solution to $\trop\{ p^M_{N,L}\}$. 

\begin{remark} \label{rem:surprising tropicalisation}
There are multiple equations in $\cP_n$ that tropicalise to the same peerless antipode equation in $\cG_n$.
In particular, there are many equations in the span of~$\cP_n$ with strictly smaller support that are lost in the tropicalisation process.
Often in tropical geometry, one instead works with the equations of minimal support to avoid this, and this is precisely the approach taken in \cite[Appendix A]{CDFS1}.
The authors find it somewhat remarkable that the peerless antipode equations still characterise strong $\Delta$-matroids, despite the tropicalisation process vastly reducing the number of equations from $\cP_n$ to~$\cG_n$.
\end{remark}

\begin{example}\label{ex:B5}
   % We emphasise that the type $B$ quadratic embedding equations do not form a basis for the Lichtenstein embedding equations, as the following example demonstrates.
    Consider the $n=5$ case: this corresponds to the orthogonal Grassmannian $\mathrm{OGr}(5,11)$.
    It follows from \cite[Example 4.9]{CDFS1} that $\cP_5$ consists of $66$ equations.
    Moreover, there are exactly 6 equations supported on antipodes of the full $5$-cube, i.e. equations of the form $p^M_{\emptyset,\emptyset}$ where $M \subseteq [5]$ such that 
    \[
    2|M| \in \cM_{\emptyset,\emptyset} = \set{4k + 6 - \delta}{k \in \ZZ \, , \, \delta \in \{0,1\}} \setminus \{3,4,5,6,7\} \, .
    \]
    The $6$ such subsets are the five singleton sets and $[5]$.

    On the other hand, there is exactly one peerless antipode equation for each subcube of the $5$-cube of dimension $3$ or more.
    Thus $\cG_5$ consists of $\binom{5}{5} + 2 \binom{5}{4} + 2^2\binom{5}{3} = 51$ equations.
    We see that $\cP_5$ has $15$ more equations than $\cG_5$:
    $6$ equations in $\cP_n$ tropicalise to the unique peerless antipode equation supported on the full $5$-cube, and for each of the ten $4$-cubes contained in the $5$-cube, two equations in $\cP_n$ tropicalise to the unique peerless antipode equation supported on that $4$-cube.
    \end{example}

For general $n$, the cardinality of $\cG_n$ equals the number of subcubes of the $n$-cube of dimension $3$ or greater, which is $\sum_{k=3}^n 2^{n-k} \binom{n}{k} = 3^n - 2^n - n 2^{n-1} - \binom{n}{2} 2^{n-2}= \Theta(3^n)$; see sequence A345954 in \cite{OEISA345954} for a relationship to ternary strings.
On the other hand, the cardinality of $\cP_n$ can be calculated using the analysis in \cite{CDFS1} to be $2^{n-1} (2^n +1) - \binom{2n+1}{n} = \Theta(4^n)$.
%4^n/2 +  2^n/2 - 4^{n+1}/(2\sqrt{2n+3})\cdot\prod{factors less than 1}
% cf. https://planetmath.org/asymptoticsofcentralbinomialcoefficient
The first few values are listed in the table below.

\begin{center}
\begin{tabular}{ c | c c c c c c c c c  }
$n $ &3& 4 & 5  & 6 & 7 & 8 & 9 & 10  \\ 
\hline
$|\cP_n|$ & 1 & 10 & 66 & 364 & 1821 & 8586 & 38950 & 172084 &   \\  
$|\cG_n|$ & 1 & 9 & 51 &233  &939  & 3489 & 12259 & 41385   \\
\end{tabular}
\end{center}

\printbibliography

\end{document}